\definecolor{labelkey}{rgb}{0,0,1}
\definecolor{Red}{rgb}{0.7,0,0.1}
\definecolor{Green}{rgb}{0,0.7,0}
\newtheorem{theorem}{Theorem}[section]
\newtheorem{Cor}{Corollary}[section]
\newtheorem{Thm}{Theorem}[section]
\newtheorem{Lem}[theorem]{Lemma}
\newtheorem{Rmk}{Remark}[section]
\numberwithin{equation}{section}
\newcommand{\al}{\alpha}
\newcommand{\be}{\beta}
\newcommand{\de}{\delta} 
\newcommand{\De}{\Delta}
\newcommand{\eps}{\epsilon}
\newcommand{\veps}{\varepsilon}
\newcommand{\ph}{\varphi}
\newcommand{\gam}{\gamma}
\newcommand{\kap}{\kappa}
\newcommand{\lam}{\lambda}
\newcommand{\Lam}{\Lambda}
\newcommand{\si}{\sigma}
\newcommand{\xe}{\xi-\eta}
\newcommand{\tiltht}{\widetilde{\theta}}
\newcommand{\tht}{\theta}
\newcommand{\om}{\omega}
\newcommand{\ze}{\zeta}
\newcommand{\lpj}{\triangle_j}
\newcommand{\lpk}{\triangle_k}
\newcommand{\tlpk}{\tilde{\triangle}_k}
\newcommand{\ZZ}{\mathbb{Z}}
\newcommand{\RR}{\mathbb{R}}
\newcommand{\NN}{\mathbb{N}}
\newcommand{\lb}{\langle}
\newcommand{\rb}{\rangle}
\newcommand{\goesto}{\rightarrow}
\newcommand{\Sob}[2]{\lVert#1\rVert_{#2}}
\newcommand{\nrm}[1]{\lVert#1\rVert}
\newcommand{\bdy}{\partial}
\newcommand{\til}[1]{\widetilde{#1}}
\newcommand{\Ae}{\abs{\eta}}
\newcommand{\Ax}{\abs{\xi}}
\newcommand{\Axe}{\abs{\xi-\eta}}
\newcommand{\Hdot}{\dot{H}}
\newcommand{\Acal}{\mathcal{A}}
\newcommand{\Bcal}{\mathcal{B}}
\newcommand{\Gdot}{\dot{G}}
\newcommand{\Ft}{\mathcal{F}}
\newcommand{\ld}{(\ln(I-\Delta))}
\newcommand{\ldmu}{(\ln(I-\Delta))^{\mu}}
\newcommand{\lnx}{(\ln(1+\abs{\xi}^{2}))}
\newcommand{\lnxe}{(\ln(1+\abs{\xi-\eta}^{2}))}
\newcommand{\Ba}{\mathcal{B}(2^{k-3})}
\newcommand{\An}{\mathcal{A}(2^{k-1},2^{k+1})}
\DeclareMathOperator{\supp}{supp}
\DeclareMathOperator*{\esssup}{ess\,sup}
\DeclareMathOperator*{\Div}{div}
\title[Existence, uniqueness, and smoothing for the GSQG equation in critical Sobolev spaces]{On the existence, uniqueness, and smoothing of solutions to the generalized SQG equations in critical Sobolev spaces}
\author{Michael S. Jolly, Anuj Kumar, Vincent R. Martinez}
\date{January 18, 2021}
\thanks{The research of M.S.J. and A.K. was supported in part by the NSF grant DMS-1818754. The research of V.R.M. was supported in part by the PSC-CUNY grant 62239-00 50.}
\begin{document}

\maketitle
\begin{abstract}
	This paper studies the dissipative generalized surface quasi-geostrophic equations in a supercritical regime where the order of the dissipation is small relative to order of the velocity, and the velocities are less regular than the advected scalar by up to one order of derivative. We also consider a non-degenerate modification of the endpoint case in which the velocity is less smooth than the advected scalar by slightly more than one order. The existence and uniqueness theory of these equations in the borderline Sobolev spaces is addressed, as well as the instantaneous smoothing effect of their corresponding solutions. In particular, it is shown that solutions emanating from initial data belonging to these Sobolev classes {{immediately enter a Gevrey class}}. Such results appear to be the first of its kind for a quasilinear parabolic equation whose coefficients are of higher order than its linear term;  they rely on an approximation scheme which modifies the flux in such a way that preserves the underlying commutator structure lost by having to work in the critical space setting, as well as delicate adaptations of well-known commutator estimates to Gevrey classes.
\end{abstract}

{\noindent \small {\it {\bf Keywords: surface quasi-geostrophic (SQG)
      equation, generalized SQG equation, critical space, existence, uniqueness, Gevrey regularity, smoothing effect, commutator estimates, quasilinear parabolic equation}
  } \\
  {\it {\bf MSC 2010 Classifications:} 76D03, 35Q35, 35Q86, 35K59, 35B65, 34K37
     } }


\section{Introduction}
    {The main equation of interest in this paper is the two-dimensional (2D) dissipative generalized surface quasi-geostrophic (gSQG) equation given by}
	\begin{align}\label{E:dissipative-beta}
	\begin{split}
	{\partial_{t}\theta +{\gam} \Lam^{\kap}\tht+ u \cdot \nabla \theta=0,\quad 
	u=\nabla^{\perp}\psi:=(-\partial_{x_2}\psi,\partial_{x_1}\psi) ,\quad \Delta \psi=\Lambda^{\beta}\theta, \quad 0\leq\be<2}.
	\end{split}
	\end{align}
	Here,  $\theta$ represents the evolving scalar and $\psi$ its corresponding streamfunction. The operator $\Lambda$ denotes the fractional laplacian operator, $\left(-\Delta\right)^{\frac{1}{2}}$. The parameters $\gam,\kap$ are non-negative with $\kap\in(0,2]$. We assume the domain is the plane, $\RR^2$, and consider the initial value problem \eqref{E:dissipative-beta} such that $\tht(0,x)=\tht_0(x)$, where $\tht_0$ is given.  This model was first introduced in \cite{ChaeConstantinWu2012}, while its inviscid  ($\gam=0$) counterpart was studied in \cite{ChaeConstantinCordobaGancedoWu2012}. The family of equations in \eqref{E:dissipative-beta} parametrized by $\be\in[0,2)$ interpolates between the 2D incompressible Euler equation ($\be=0$) and the SQG equation ($\be=1$), and extrapolates beyond the SQG equation, $\be\in(1,2)$, to a family of active scalar equations with increasingly singular velocity. The $\be=2$ endpoint can also be considered by slightly modifying the equation for the streamfunction in \eqref{E:dissipative-beta}. The modification proposed in \cite{ChaeConstantinWu2012, ChaeConstantinCordobaGancedoWu2012} is given by 
	    \begin{align}\label{E:dissipative-log}
	        \psi=-(\ln(I-\De))^\mu\tht,\quad\mu>0.
	    \end{align}
	 We will ultimately study \eqref{E:dissipative-beta} when $\gam>0$, $\kap\in(0,1)$, and $1<\be\leq2$, where the endpoint case, $\be=2$, is modified as \eqref{E:dissipative-log}. When $\be\in(1,2)$, we establish existence and uniqueness of solutions for arbitrary initial data in $H^{\be+1-\kap}$, global existence when the corresponding homogeneous norm of the initial data is sufficiently small, and establish Gevrey regularity for the unique solution (see \cref{thm:main:beta}) with exponent arbitrarily close to optimal, that is, to a Gevrey class that is arbitrarily close to the one that the solution to the linear, parabolic part of \eqref{E:dissipative-beta}, \eqref{E:dissipative-log} naturally belongs to; the analogous results for the modified endpoint case are also developed in $H^{\si}$, for $\si>3-\kap$ (see \cref{thm:main:log}). 
	
	The case of the SQG equation ($\be=1$) models the temperature or buoyancy of a strongly stratified fluid in a rapidly rotating regime and is a fundamental equation in geophysics and meteorology (cf. \cite{Pedlosky1986}). It has received considerable attention in the last three decades especially due to the presence of mechanisms strongly analogous to those for vortex-stretching in the 3D Euler equation (cf. \cite{ConstantinMajdaTabak1994, Cordoba1998}). As a 2D hydrodynamic model, the SQG also exhibits features of 2D turbulence analogous to those exhibited by the 2D Euler equation (cf. \cite{MajdaTabak1996}). When $\gam,\kap>0$ and $\be=1$ \eqref{E:dissipative-beta} becomes the {dissipative} SQG equation. Here, one distinguishes between the subcritical ($1<\kap\leq2$), critical ($\kap=1$), and supercritical $(\kap<1)$ regimes. Global regularity has been established in the subcritical (cf. \cite{Resnick1995, ConstantinWu1999}), and critical regimes (cf. \cite{KiselevNazarovVolberg2007, CaffarelliVasseur2010, KiselevNazarov2009, ConstantinVicol2012, ConstantinTarfuleaVicol2015}). Global regularity in the supercritical regime remains an outstanding open problem, though conditional regularity (cf. \cite{ConstantinWu2009, Coti-ZelatiVicol2016}) or eventual regularity results (cf. \cite{Dabkowski2011}) are available. Nevertheless, {local well-posedness} for large data and {global well-posedness} for small data have been established in several functional spaces, including a wide-range of scaling-critical or borderline spaces, in the supercritical regime (cf. \cite{ChaeLee2003, Wu2004, ChenMiaoZhang2007, HmidiKeraani2007}), as well as the corresponding parabolic smoothing effect (cf. \cite{Dong2010, DongLi2010, Biswas2014, BiswasMartinezSilva2015}).

	    In the regime $1<\be<2$, in \cite{ChaeConstantinCordobaGancedoWu2012}, the Cauchy problem for the inviscid case ($\kap=0$) of \eqref{E:dissipative-beta} was shown to be {locally well-posed} in $H^{4}$. {This result was sharpened in \cite{HuKukavicaZiane2015}, where {local well-posedness} was established in  $H^{\be+1+\epsilon}$}, for any $\epsilon>0$. For blow-up of a closely related non-local transport equation, we refer the reader to \cite{Dong2014}. Local well-posedness in the borderline space $H^{\be+1}$ remains an outstanding open problem for these models, especially in light of recent ill-posedness results for the Euler equation (cf. \cite{BourgainLi2015, ElgindiMasmoudi2020}) and complementary results on the impossibility of uniform continuity of the solution operator (cf. \cite{BourgainLi2019, HimonasMisiolek2010, Inci2015, MisiolekYoneda2016} for Euler) and (cf. \cite{Inci2018}) for the inviscid SQG). Positive results are, however, available for rather mild regularizations up to a threshold. Such a threshold was identified in  \cite{ChaeWu2012} for the inviscid
	    generalized SQG equations in the regime $\be\in[0,1)$ and established to be sharp in \cite{Kwon2020} for the particular case of the 2D incompressible Euler equation.  In an upcoming paper by the authors (cf. \cite{JollyKumarMartinez2020b}), alternative mechanisms for recovering well-posedness are studied in the spirit of \cite{ChaeWu2012} for the full range $\be\in(1,2]$, with the $\be=2$ endpoint modified accordingly. In contrast, continuity of the solution operator in borderline spaces has been shown to hold for the   Navier-Stokes equations, (cf. most recently \cite{FarwigGigaHsu2019}). To the best of our knowledge, analogous results for similar hydrodynamic models, particularly for \eqref{E:dissipative-beta} in the regime of parameters treated here, are not known and remain an interesting unresolved issue.

    In this paper, we address the problem of existence, uniqueness of solutions, and the smoothing effect for the corresponding Cauchy problem of (\ref{E:dissipative-beta}) when $\kap\in(0,1)$ and $\be\in(1,2]$, with $\be=2$ modified as described in \eqref{E:dissipative-log}, particularly for arbitrarily large initial data belonging to the borderline Sobolev spaces, $H^{\be+1-\kap}$. These spaces are identified by imposing norm invariance under the scaling symmetry of the equations. This scaling is defined by
        \begin{align}\label{eq:scaling}
            \tht_\lam(t,x)=\lam^{\kap-\be}\tht(\lam^\kap t,\lam x).
        \end{align}
    In specific, if $\tht$ is a solution of \eqref{E:dissipative-beta} with initial data $\tht_0$, then $\tht_\lam$ is also a solution of \eqref{E:dissipative-beta} with initial data $(\tht_0)_\lam$. The homogeneous Sobolev norm, $\Sob{\cdotp}{\dot{H}^{\be+1-\kap}}$, of a solution remains invariant under \eqref{eq:scaling}. Consequently, $\dot{H}^{\be+1-\kap}$ is referred to as a scaling-critical space for \eqref{E:dissipative-beta}. Although the modification of the $\be=2$ endpoint breaks this scaling symmetry, due to the slightly supercritical nature of the velocity, we nevertheless consider by analogy the space $H^{3-\kap+\eps}$ as the borderline space corresponding to this case. 
    
    The existence and uniqueness of solutions for large initial data in critical Sobolev spaces for the critical and supercritical SQG equation, i.e., $\be=1$ case, was established by Miura in \cite{Miura2006}, while the instantaneous smoothing effect was later established in \cite{Dong2010, Biswas2014}, using different approaches. For the subcritical SQG equation, using a mild solution approach, analyticity was established in \cite{DongLi2008} in the critical Lebesgue spaces and in \cite{Biswas2012} in Besov spaces. It was observed in \cite{Miura2006} that the main difficulty when working in the critical space setting for the supercritically dissipative SQG equation is in obtaining a suitable continuity estimate for the bilinear term. Indeed, the classical Fujita-Kato mild solution approach cannot be carried out in this setting as the low degree of dissipation cannot, alone, accommodate the loss of derivatives from the nonlinearity. One can nevertheless establish such a continuity estimate by exploiting cancellation through the underlying commutator structure in the equation. Insofar as existence is concerned, one must therefore identify a suitable approximation procedure that respects this commutator structure. In the case of the supercritically dissipative  gSQG equation, the difficulty in obtaining the desired continuity estimate is compounded by the increasingly singular velocities that characterize the family. Indeed, a direct adaptation of the analysis in \cite{Miura2006} to the gSQG family breaks down in the range $\kap\leq\be-1$ without a more delicate treatment of the nonlinearity.

\begin{figure}[ht]\label{fig:map}
\psfrag{b}{$\beta$}
\psfrag{k}{$\kappa$}
\psfrag{1}{1}
\psfrag{2}{2}
\psfrag{mSQG}{\tiny mSQG}
\psfrag{CIW}{\tiny GR \cite{ConstantinIyerWu2008}} 
\psfrag{blt1}{\tiny GWP \cite{ChaeWu2012}}
\psfrag{endpt}{$\exists$ \& ! $H^{3-\kappa+\epsilon}$ Gevrey smoothing}
\psfrag{log}{$\psi=-(\ln(I-\De))^\mu\tht$}
\psfrag{brace}{$\big\}$}
\psfrag{sqg}{$\big\}$SQG}
\psfrag{sub}{\tiny GR \cite{Resnick1995, ConstantinWu1999} An. \cite{DongLi2008} Gev.\cite{Biswas2012}}
\psfrag{sup}{\tiny CR \cite{ConstantinWu2009, Coti-ZelatiVicol2016} ER \cite{Dabkowski2011}
Gev. \cite{Biswas2014,BiswasMartinezSilva2015}}
\psfrag{bk1}{\tiny $\beta=\kappa+1$}
\psfrag{bk2}{\tiny $\beta=\kappa/2+1$}
\psfrag{critical}{\tiny GR \cite{KiselevNazarovVolberg2007, CaffarelliVasseur2010, KiselevNazarov2009, ConstantinVicol2012, ConstantinTarfuleaVicol2015}}
\psfrag{LX}{\tiny ER \cite{LazarXue2019}}
\psfrag{result1}{$\exists$ \& ! $H^{\beta+1-\kappa}$}
\psfrag{CCW}{\tiny GR \cite{ChaeConstantinWu2012}}
\psfrag{diag}{\tiny GR \cite{MiaoXue2012}}
\psfrag{LWP}{\tiny LWP \cite{MiaoXue2011}}
\psfrag{GWP}{\tiny GWP  \cite{MiaoXue2011} }
\psfrag{Gevrey}{Gevrey smoothing}
\psfrag{gamma}{\tiny ($\gamma=0$)}
\psfrag{Euler}{Euler}
\psfrag{NSE}{NSE}
\psfrag{bgtr1}{\tiny loc $\exists$ \& ! $H^4$ \cite{ChaeConstantinCordobaGancedoWu2012}}
\psfrag{HKZ}{\tiny $H^{\beta+1+\epsilon}$  \cite{HuKukavicaZiane2015}}
 \centerline{\includegraphics[scale=.75]{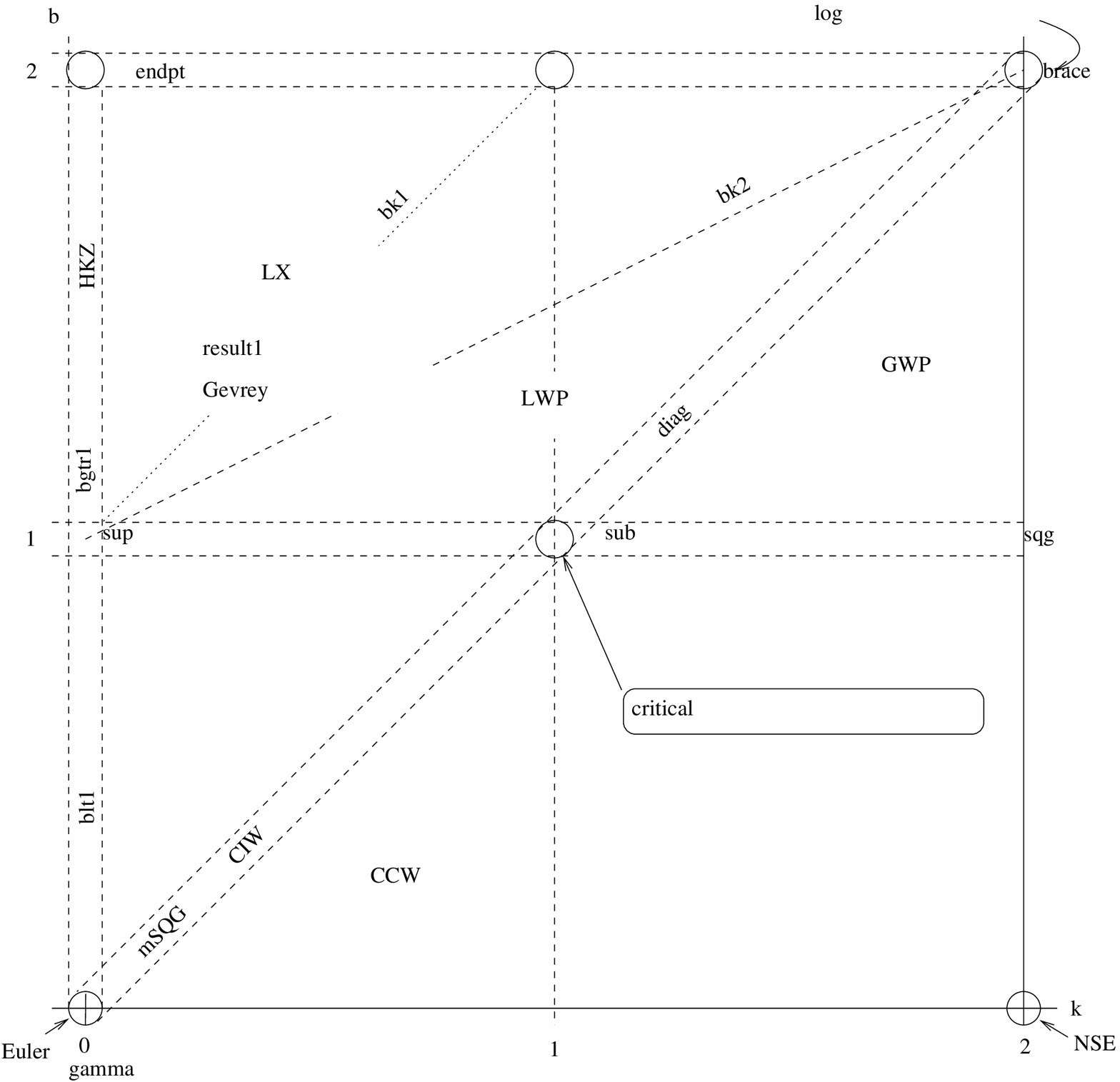} } \vskip .25 truein
\caption{LWP=local well-posedness, GWP=global well-posedness, GR=global regularity, CR=conditional regularity, ER=eventual regularity, An.=Analytic smoothing, Gev.= Gevrey smoothing}
\label{chaosfig}
\end{figure}

	When $\be\in(1,2)$, the criticality regimes are more nuanced; one identifies an additional ``subcritical" region, $\kap>\be-1$, ``critical" line, $\kap=\be-1$, and ``supercritical" regime $\kap<\be-1$. The supercritical regime represents the case where the equation becomes ``fully" quasilinear, in the particular sense that the coefficients of the nonlinearity depend on derivatives of the solution of an order that exceeds that of the linear term. Indeed, we see that in these additional critical and supercritical regimes, $\kap$ necessarily restricts to $\kap\in(0,1)$. Although we observe that one can take advantage of the additional commutator structure identified by Chae, Constantin, C\'ordoba, Gancedo, and Wu in \cite{ChaeConstantinCordobaGancedoWu2012} to successfully carry out the approaches in {\cite{Miura2006} and \cite{Biswas2014}}, a direct adaptation of the analysis there is limited to the subcritical regime $\kap>\be-1$. To overcome this limitation, one must make use of the more subtle commutator structure identified by Hu, Kukavica, and Ziane in \cite{HuKukavicaZiane2015}. In the critical space setting, however, the approximation procedure proposed by Miura in \cite{Miura2006} \textit{cannot} accommodate these additional commutators. In this paper, we propose a new approximation scheme in which one modifies the flux in such a way that ultimately preserves the underlying commutator structure (see \cref{sect:mod:flux}). Through this approximation, we are then able to obtain the desired continuity properties for the critical and supercritical regimes represented by $\kap\leq \be-1$. In either regime, we must carry out a more delicate analysis at the level of the Littlewood-Paley operators to accommodate the commutator estimates within the critical space setting, as well as extend these estimates appropriately to the Gevrey classes (see \cref{sect:commutator}). Analogous results for the $\be=2$ endpoint case are also established. As we remarked earlier, since the equation corresponding to \eqref{E:dissipative-log} does not possess a scaling symmetry, we instead work arbitrarily close to what would formally be the scaling critical space, that is, in $H^{3-\kap+\eps}$. With the appropriate commutator estimates and approximation scheme in hand, we prove our main results, \cref{thm:main:beta} and \cref{thm:main:log} in \cref{sect:proofs:main:beta} and \cref{sect:proofs:main:log}, respectively. As an immediate consequence of establishing Gevrey regularity, we obtain asymptotic decay of all derivatives with respect to the critical Sobolev topology (cf. \cref{cor:decay:beta}).
	
	Previous works on the existence theory for various regimes in the parameter space $(\kap,\be)\in(0,2]\times(0,2]$ of \eqref{E:dissipative-beta} have been carried out in \cite{ConstantinIyerWu2008,MiaoXue2012} for the diagonal case $\kap=\be$, i.e., the so-called ``modified SQG" equation, and in \cite{MiaoXue2011} for the regime $1<\be<2$, $\be<\kap<2$, where local well-posedness was studied, and in \cite{MiaoXue2012}, for the regime $1<\be<2$, $2\be-2<\kap<\be$, where global well-posedness was studied. A closely related generalization of \eqref{E:dissipative-beta} was also considered in \cite{LazarXue2019}, where global existence of weak solutions, global regularity for a slightly supercritical regularization, and eventual regularity of solutions were established. We point out that the smoothing effect in this paper is proved for an equation that belongs outside of the general class of systems treated in \cite{OliverTiti2001,BaeBiswas2015} as well as the general semilinear parabolic equation that includes the incompressible Navier-Stokes equations as a special case, treated in \cite{CheminGallagherZhang2020}. Indeed, this paper establishes a Gevrey regularity smoothing effect for a {quasilinear} parabolic equation of the form $\bdy_t\tht+\gam\Lam^\kap\tht=b(\Lam^\al\tht,{\nabla}\tht)$, where $\al,\kap\in(0,1)$, particularly, for one in which $\al>\kap$.  Some context for these results (displayed in larger font) in the $(\kap,\beta)$--plane is given in Figure 1.


	\section{Mathematical Preliminaries}\label{section:notation:preliminaries}
	Denote by $\mathscr{S}$ the space of Schwartz class functions on $\RR^2$ and by $\mathscr{S}'(\mathbb{R}^2)$ the space of tempered distributions. For $p\in[1,\infty]$, we let $L^p$ and $L^p_{loc}$ denote the spaces of Lebesgue integrable and locally Lebesgue integrable functions of order $p$ on $\RR^2$, respectively. The norm on $L^p$ is defined as
	    \begin{align}\notag
	       \Sob{f}{L^p}&:= \begin{cases}
	        \left(\int_{\RR^2}|f(x)|^pdx\right)^{1/p},& p\in[1,\infty),\\
	        \esssup_{x\in\RR^2}|f(x)|,& p=\infty.
	        \end{cases}
	    \end{align}    
	We recall that $L^p$ is a Banach space with this norm and that $L^p_{loc}\subset\mathscr{S}'$, for all $p\in [1,\infty]$. In the particular case $p=2$, $L^2$ can be endowed with the following inner product:
	    \begin{align*}
	        \lb f, g\rb:=\int_{\RR^2}f(x)\overline{g(x)}dx,
	    \end{align*}
	so that $L^2$ becomes a Hilbert space. Given $f\in\mathscr{S}'$, let $\hat{f}$ denote the Fourier transform of $f$; we will also use the notation, $\Ft$, to denote the Fourier transform. We recall that $\mathcal{F}$ is an isometry on $L^2$ and in particular that
	    \begin{align}\notag
	        \lb f,g\rb=\lb \hat{f},\hat{g}\rb.
	    \end{align}
	We define the fractional laplacian operator, $\Lam=(-\De)^{1/2}$, and its powers, $\Lam^\si$, $\si\in\RR$, by
	    \begin{align}\notag
	        \mathcal{F}(\Lam^\si f)(\xi)=|\xi|^\si\mathcal{F}(f).
	    \end{align} 
    For $\si\in\RR$, the homogeneous and the inhomogeneous Sobolev spaces on $\mathbb{R}^{2}$ are defined as
	    \begin{align}
	        &\Hdot^\si:=\left\{f\in \mathscr{S}':\hat{f}\in L^2_{loc}, \Sob{f}{\Hdot^\si}:=\Sob{\Lam^\si f}{L^2}<\infty\right\},\label{def:hom:Sob:norm}\\
	        &H^\si:=\left\{f\in \mathscr{S}':\hat{f}\in L^2_{loc}, \Sob{f}{H^\si}:=\Sob{(I-\De)^{\si/2} f}{L^2}<\infty\right\}.\label{def:inhom:Sob:norm}
	    \end{align}

	The spaces \eqref{def:hom:Sob:norm}, \eqref{def:inhom:Sob:norm} can be endowed with inner products given by
	    \begin{align}
	        \lb f,g\rb_{\Hdot^\si}&:=\lb \Lam^\si f,\Lam^\si g\rb,\notag\\
	        \lb f,g\rb_{H^\si}&:=\lb (I-\De)^\si f,(I-\De)^\si g\rb\notag.
	    \end{align}
	With this in hand, $H^\si$ is a Hilbert space for all $\si\in\RR$, whereas, in dimension-two,  $\Hdot^\si$ is a Hilbert space if and only if $\si<1$.
	The inhomogeneous spaces are nested $H^{\si'}\subset H^\si$, whenever $\si'>\si$, and moreover the embedding is compact over any compact set $K\subset\RR^2$. On the other hand, the homogeneous spaces are, in general, only related by the following interpolation inequality: For $\si_1\leq\si\leq \si_2$ 
	\begin{align}\label{E:interpolation}
	    \nrm{f}_{\Hdot^{\si}}\le C\Sob{f}{\Hdot^{\si_1}}^{\frac{\si_2-\si}{\si_2-\si_1}}\Sob{f}{\Hdot^{\si_2}}^{\frac{\si-\si_1}{\si_2-\si_1}},
	\end{align}
    for some constant depending only on $\si,\si_1,\si_2$. A related interpolation inequality that will be also be useful is stated as follows: for $\si\in\RR$ and $\si_1>-1>-\si_2$, there exists a constant, $C$, depending on $\si,\si_1,\si_2$ such that
        \begin{align}\label{E:interpolation:L1}
            \nrm{|\cdotp|^\si\hat{f}}_{L^1}\leq C\Sob{f}{\dot{H}^{\si+\si_2}}^{\frac{\si_1+1}{\si_1+\si_2}}\Sob{f}{\dot{H}^{\si-\si_1}}^{\frac{\si_2-1}{\si_1+\si_2}}.
        \end{align}
	
	\subsection{Littlewood-Paley Decomposition}
	We will make use of the characterization of Sobolev spaces in terms of the Littlewood-Paley decomposition. We review this decomposition now and refer the reader  \cite{BahouriCheminDanchinBook2011, CheminBook1995} for additional details.

	First, let us define
	\begin{align*}
	\mathscr{Q}:=\left\{f\in \mathscr{S}: \int_{\mathbb{R}^2}f(x)x^{\tau}\, dx=0, \quad \abs{\tau}=0,1,2,\cdots \right\}.
	\end{align*}
	We denote by $\mathscr{Q}'$ the topological dual space of $\mathscr{Q}$. Note that $\mathscr{Q}'$ can be identified with the space of tempered distributions modulo polynomials, that is, as
	\begin{align*}
	\mathscr{Q}'\cong\mathscr{S}'/\mathscr{P}.
	\end{align*}
    {where the vector space of polynomials on $\mathbb{R}^2$ is denoted by  $\mathscr{P}$}.
	
	Let us denote by ${\Bcal}(r)$, the open ball of radius $r$ centered at the origin and by ${\Acal}(r_{1},r_{2})$ , the open annulus with inner and outer radii $r_{1}$ and $r_{2}$, and centered at the origin. There exist two non-negative radial functions $\chi,\phi\in\mathscr{S}$ with $\supp\chi\subset{\Bcal}(1)$ and $\supp\phi\subset{\Acal}(2^{-1},2)$ such that for $\chi_j(\xi):=\chi(2^{-j}\xi)$ and $\phi_j(\xi):=\phi(2^{-j}\xi)$, one has $\sum_{j\in\ZZ}\phi_j(\xi)=1$, whenever $\xi\in\RR^2\setminus\{\mathbf{0}\}$, $\chi+\sum_{j\geq0}\phi_j\equiv 1$, and one has the following almost-orthogonality conditions:
	    \begin{align}\notag
	        &\supp\phi_i\cap\supp\phi_j=\varnothing,\quad |i-j|\geq2,\quad\text{and}\quad\supp\phi_i\cap\supp\chi =\varnothing,\quad i\geq1.
	    \end{align}
	We will make use the shorthand
        \begin{align}\notag
            {\Acal}_{j}= {\Acal}(2^{j-1},2^{j+1}),\quad{\Acal}_{\ell,k}= {\Acal}(2^{\ell},2^{k}),\quad  {\Bcal}_j={\Bcal}(2^j),
        \end{align}
   so that, in particular, ${\Acal}_j={\Acal}_{j-1,j+1}$. With this notation, observe that
	    \begin{align}\label{eq:rewrite:supp}
	        \supp\phi_j\subset{\Acal}_j,\quad \supp\chi_j\subset{\Bcal}_j.
	    \end{align}
	We denote the (homogeneous) Littlewood-Paley dyadic blocks by ${\lpj}$ and $S_{j}$, which are both defined in terms of its Fourier transform by
	\begin{align}\notag
	\mathcal{F}({\lpj}f)=\phi_{j}\mathcal{F}(f), \quad \mathcal{F}(S_{j}f)=\chi_{j}\mathcal{F}(f). 
	\end{align}
    The following localization properties of $\{\lpj\}_{j\in\ZZ}$ and $\{S_j\}_{j\in\ZZ}$ are a direct consequence of \eqref{eq:rewrite:supp}:
	\begin{align*}
	&\mathcal{F}({\lpj}f)|_{{\Acal}_j^c}=0,\quad
	\mathcal{F}(S_{j}f)|_{{\Bcal}_j^{c}}=0,
	\end{align*}
    Observe that by definition
	\begin{align*}
	S_{j}=S_i+\sum_{ i\leq k \le j-1}{\lpk},\quad i<j,
	\end{align*}
    and, in particular that
    \begin{align*}
         f&=S_if+\sum_{j\geq i}\lpj f,\quad i\in\ZZ,\quad f\in\mathscr{S}'
    \end{align*}
	On the other hand, when restricted to $\mathscr{Q}'$ one has
	    \begin{align*}
	        S_j=\sum_{k\leq j-1}\lpk,
	    \end{align*}
	and, in particular that
	    \begin{align*}
	        f&=\sum_{j\in\ZZ}\lpj f,\quad f\in\mathscr{Q}'.
	    \end{align*}
	In light of the Littlewood-Paley decomposition, one can see that the homogeneous Sobolev spaces, $\Hdot^{\si}$, can be identified with the homogeneous Besov spaces, $\dot{B}^{\si}_{2,2}$, whose norm is given by
	    \begin{align*}
	        \nrm{f}_{\dot{B}^{\si}_{2,2}}:=\left(\sum_{j\in \mathbb{Z}}\left(2^{j\si}\nrm{\lpj f}_{L^2}\right)^{2}\right)^{\frac{1}{2}}.
	    \end{align*}
	In particular, we have
	    \begin{align*}
	        C^{-1}\Sob{f}{\dot{B}^\sigma_{2,2}}\leq \Sob{f}{\dot{H}^\si}\leq C\Sob{f}{\dot{B}^\sigma_{2,2}},
	    \end{align*}
	for some constant $C$, independent of $\si$. The relation between the Littlewood-Paley blocks and the fractional laplacian is conveniently captured by the following Bernstein-type inequalities, which we will make copious use of throughout the paper.
	\begin{Lem}[Bernstein inequalities]\label{T:Bernstein}
		Let $\si\in\RR$ and $1\le p \le q\le \infty$. Then
		\begin{align*}
		c'2^{\si j}\nrm{{\lpj}f}_{L^q}\le \nrm{\Lam^{\si}{\lpj}f}_{L^q}\le c2^{\si j+2j(\frac{1}{p}-\frac{1}{q})}\nrm{{\lpj}f}_{L^p},
		\end{align*}
		where $c',c$ are constants that depends on $p,q$ and $\si$.
	\end{Lem}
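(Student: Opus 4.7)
The plan is to prove both inequalities by realizing the relevant Fourier multipliers as convolutions against rescaled Schwartz kernels, and then invoking Young's convolution inequality. Throughout, the key observation is that since $\widehat{\lpj f}$ is supported in the annulus $\mathcal{A}_j = \mathcal{A}(2^{j-1},2^{j+1})$, we may freely introduce an auxiliary radial cutoff $\tilde{\phi}\in\mathscr{S}$ supported in, say, $\mathcal{A}(2^{-2},2^{2})$ with $\tilde{\phi}\equiv 1$ on $\supp\phi$. Then $\tilde{\phi}(2^{-j}\xi)$ equals $1$ on $\mathcal{A}_j$, so for any $\si\in\RR$,
\begin{align*}
    \Lam^\si \lpj f = K_j^\si * \lpj f,\quad \widehat{K_j^\si}(\xi) = |\xi|^\si\tilde{\phi}(2^{-j}\xi).
\end{align*}

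For the upper (right-hand) inequality, I would change variables $\eta = 2^{-j}\xi$ in the inverse Fourier transform to get the scaling identity $K_j^\si(x) = 2^{(2+\si)j} K^\si(2^j x)$, where $K^\si := \mathcal{F}^{-1}(|\cdotp|^\si \tilde{\phi})$. Because $\tilde{\phi}$ is smooth and compactly supported \emph{away} from the origin, the symbol $|\eta|^\si\tilde{\phi}(\eta)$ is a Schwartz function, so $K^\si\in\mathscr{S}$ and in particular $\|K^\si\|_{L^r}<\infty$ for every $r\in[1,\infty]$. A direct computation gives $\|K_j^\si\|_{L^r} = 2^{\si j + 2j(1-1/r)}\|K^\si\|_{L^r}$. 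Taking $r$ so that $1+\tfrac{1}{q} = \tfrac{1}{p}+\tfrac{1}{r}$ (i.e.\ $1-\tfrac{1}{r} = \tfrac{1}{p}-\tfrac{1}{q}$, which is admissible since $p\le q$), Young's inequality yields
\begin{align*}
    \nrm{\Lam^\si\lpj f}_{L^q} \le \nrm{K_j^\si}_{L^r}\nrm{\lpj f}_{L^p} \le c\, 2^{\si j + 2j(\frac{1}{p}-\frac{1}{q})}\nrm{\lpj f}_{L^p}.
\end{align*}

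For the lower (left-hand) inequality I would reverse the procedure: writing $\lpj f = \Lam^{-\si}\Lam^\si\lpj f = K_j^{-\si}*\Lam^\si\lpj f$ and applying the previous argument with $\si$ replaced by $-\si$ and with $p=q$ (which corresponds to $r=1$, where the kernel $K^{-\si}\in\mathscr{S}\subset L^1$ still lives thanks to the annular support of $\tilde\phi$ avoiding the singularity of $|\cdotp|^{-\si}$) gives $\nrm{\lpj f}_{L^q}\le c\, 2^{-\si j}\nrm{\Lam^\si\lpj f}_{L^q}$, which is the claimed lower bound upon rearranging.

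There is no real obstacle here; the only point of care is that, to handle arbitrary real $\si$ (in particular negative $\si$), one must use an auxiliary cutoff $\tilde\phi$ supported \emph{in an annulus} rather than a ball, so that the multiplier $|\xi|^\si\tilde\phi(\xi)$ is genuinely Schwartz. This is precisely what guarantees uniform-in-$j$ control of the constants $\|K^{\pm\si}\|_{L^r}$, and it is what lets the same argument treat both directions of the inequality.
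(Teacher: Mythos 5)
Your proof is correct and is the standard argument: the paper does not prove \cref{T:Bernstein} itself but quotes it from \cite{BahouriCheminDanchinBook2011, CheminBook1995}, where precisely this kernel-rescaling-plus-Young's-inequality argument appears. The one point of care you flag --- taking the auxiliary cutoff $\tilde{\phi}$ supported in an annulus so that $|\xi|^{\si}\tilde{\phi}(\xi)$ is Schwartz for every real $\si$, including the $-\si$ needed for the lower bound --- is exactly the right one, and the exponents $2^{\si j+2j(\frac{1}{p}-\frac{1}{q})}$ come out correctly from the scaling $\nrm{K_j^{\si}}_{L^r}=2^{\si j+2j(1-\frac{1}{r})}\nrm{K^{\si}}_{L^r}$ in dimension two.
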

	
	\subsection{Gevrey classes} We introduce the Gevrey classes in this section. These spaces identify a scale of subspaces between the analytic class of functions, $C^\om$, and the class of smooth functions, $C^\infty$. We will consider a Littlewood-Paley characterization of the Gevrey classes inspired by the spectral characterization of the Gevrey norm introduced by Foias and Temam in their seminal paper \cite{FoiasTemam1989}. We remark that the Littlewood-Paley characterization was also adopted in \cite{Biswas2014}; for an extension to $L^p$-based Besov spaces, see \cite{BiswasMartinezSilva2015}. 
	
	Let $\al\in(0,1]$ and $\lam>0$. Then we define the Gevrey operator, ${G}_\al^\lam$, of order $\al$ and radius $\lam$ by
    \begin{align*}
        \mathcal{F}({G}_\al^\lam f)(\xi)=e^{\lam|\xi|^{\al}}\hat{f}(\xi).
    \end{align*}
    We will also make use of the notation   
        \begin{align}\label{def:alt:Gev:op}
            {G}_\al^\lam=e^{\lam \Lambda^\al}.
        \end{align}
    We define the $\Hdot^{\si}$--based Gevrey norm by
	\begin{align*}
	\Sob{f}{\dot{G}^{\lam}_{{\al},\si}}:=\Sob{{G}_\al^\lam f}{\Hdot^{\si}}.
	\end{align*}
	Then the homogeneous $(\al,\lam,\si)$--Gevrey classes are defined as
	    \begin{align}\label{def:Gev:class}
	        \dot{G}^\lam_{\al,\si}:=\{f\in \dot{H}^\si:\Sob{f}{\dot{G}^{\lam}_{\al,\si}}<\infty\}.
	    \end{align}
	Finiteness of $\nrm{f}_{\dot{G}^{\lam}_{\al,\si}}$, for some $\si\in\RR$ and $\al,\lam>0$, automatically yields estimates on higher-order derivatives. Indeed, one has
	\begin{align}\label{E:decay-estimate}
	\Sob{\bdy^\be f}{\Hdot^{\si}}\le \left(\frac{\be!}{(\lam\al)^{|\be|}
	}\right)^{1/\al}\nrm{f}_{G^{\lam}_{\al,\si}},
	\end{align}
    for all multi-indices, $\be\in\NN_0^{2}$, where $\NN_0:=\NN\cup\{0\}$. Thus, with the Sobolev embedding theorem, \eqref{E:decay-estimate} implies uniform bounds on all orders of derivatives. In the particular case $\al=1$, if $f\in L^2$ satisfies (\ref{E:decay-estimate}) for all $\be\in\NN_0^2$, for some $\lam>0$, then $f$ is real analytic at each $x\in\RR^2$ with analyticity radius $\lam$. On the other hand if (\ref{E:decay-estimate}) is satisfied with $\al<1$, we say that $f$ belongs to a subanalytic Gevrey class, which is a subclass of smooth functions. For additional properties of Gevrey classes and applications to a wide-class of equations, the reader is referred to \cite{LevermoreOliver1997, OliverTiti2000, OliverTiti2001, BaeBiswas2015, MartinezZhao2017}.
	
	The main scenario of interest in this paper is when $f$ is a time-dependent function, globally defined in time. In this case, if $f$ satisfies $f(t)\in\dot{G}^{\lam(t)}_{\al,\si}$, for all $t>0$, for some monotonically increasing function $\lam=\lam(t)$, then (\ref{E:decay-estimate}) yields temporal decay of all higher-order derivatives of $f$; this  is one of the main motivations for using the Gevrey norm.
	
	We will distinguish between the $(\al,\lam)$--Gevrey operators, ${G}_\al^\lam$, and the related fractional heat propagator, $\{{H}_\kap^\gam(t)\}_{t\geq0}$, which, for $\kap\in(0,2]$, ${\gam}>0$, we define as
    \begin{align*}
        {H}_\kap^\gam(t)=\exp\left(-t{\gam}\Lam^\kap\right).
    \end{align*}
In particular, given $\tht_0\in L^2$, one has that $\tht(t;\tht_0):={H}_\kap(t)\tht_0$ satisfies
    \begin{align*}
        \bdy_t\tht+{\gam}\Lam^\kap\tht=0,\quad \tht(0;\tht_0)=\tht_0.
    \end{align*}
Observe that the $(\kap,\lam)$--Gevrey operators can be rewritten as fractional heat propagators appropriately re-scaled:
    \begin{align*}
       {G}_\kap^\lam={H}_\kap^\gam\left(-\frac{\lam}{\gam}\right)={H}^1_\kap(\lam).
    \end{align*}
One may thus alternatively view the $(\al,\lam,\si)$--Gevrey classes as the space of functions for which
    \begin{align}\notag
        \Sob{{H}_\kap^1(-\lam)f}{\dot{H}^\si}<\infty,
    \end{align}
that is, for which the inverse of the heat propagator belongs to the $\dot{H}^\si$. It can be viewed as a parabolic analog of the so-called $X^{s,b}$--spaces in the dispersive PDE literature, which are defined in terms of inverses of dispersive operators such as the Airy kernel or Schr\"odinger propagator (cf. \cite{Bourgain1993a, Bourgain1993b, TaoBook2006}).

	Lastly, let us recall the following interpolation-type inequality for Gevrey operators that was originally proved in \cite{OliverTiti2000}, but stated here in a slightly more generalized form.
	\begin{Lem}\label{lem:interpolate:gevrey}
		{Let $\al,\lam>0$ and $s_1\leq s_2$. Suppose $f\in \Hdot^{s_1}$ such that $G_\al^{\lam}f\in \Hdot^{s_2}$. Then}
		\begin{align*}
		{\Sob{{G}_\al^{\lam}f}{\dot{H}^{s_1}}^2\leq e\Sob{{G}_\al^{(1-\rho)\lam}f}{\dot{H}^{s_1}}^2+\left({2\rho\lam}\right)^{\frac{2(s_2-s_1)}{\al}}\Sob{{G}_\al^{\lam}f}{\dot{H}^{s_2}}^2},
		\end{align*}
	{for all $\rho\in(0,1]$}.
	\end{Lem}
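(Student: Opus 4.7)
The plan is to work on the Fourier side, where the Gevrey operator $G_\alpha^\lambda$ and the homogeneous Sobolev norm have simple multiplier characterizations, and then to split the frequency integral at a threshold that optimally balances the two terms on the right-hand side. Concretely, by Plancherel we have
\begin{align*}
\Sob{G_\alpha^\lambda f}{\dot{H}^{s_1}}^2 = \int_{\RR^2} |\xi|^{2s_1} e^{2\lambda|\xi|^\alpha} |\hat f(\xi)|^2\, d\xi,
\end{align*}
and the natural splitting point is determined by writing $e^{2\lambda|\xi|^\alpha} = e^{2(1-\rho)\lambda|\xi|^\alpha}\, e^{2\rho\lambda|\xi|^\alpha}$ and choosing the frequency cutoff where $e^{2\rho\lambda|\xi|^\alpha}$ crosses $e$, i.e., at $|\xi|^\alpha = 1/(2\rho\lambda)$.

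On the low-frequency piece $\{|\xi|^\alpha \leq 1/(2\rho\lambda)\}$, the factor $e^{2\rho\lambda|\xi|^\alpha}$ is bounded by $e$, so the integrand is pointwise majorized by $e\,|\xi|^{2s_1} e^{2(1-\rho)\lambda|\xi|^\alpha}|\hat f(\xi)|^2$; extending the integral to all of $\RR^2$ produces $e\,\Sob{G_\alpha^{(1-\rho)\lambda}f}{\dot H^{s_1}}^2$. On the high-frequency piece $\{|\xi|^\alpha > 1/(2\rho\lambda)\}$, I will use that $|\xi|^{2(s_2-s_1)} > (2\rho\lambda)^{-2(s_2-s_1)/\alpha}$ (which requires $s_2 \geq s_1$), so inserting $1 < (2\rho\lambda)^{2(s_2-s_1)/\alpha}|\xi|^{2(s_2-s_1)}$ under the integral converts the $|\xi|^{2s_1}$ weight to $|\xi|^{2s_2}$ at the cost of the claimed prefactor, and dropping the frequency restriction yields $(2\rho\lambda)^{2(s_2-s_1)/\alpha}\Sob{G_\alpha^\lambda f}{\dot H^{s_2}}^2$. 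Adding the two contributions gives the inequality.

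The only subtlety is well-definedness of the integrals: the hypothesis $f\in\dot H^{s_1}$ with $G_\alpha^\lambda f\in\dot H^{s_2}$ ensures that $\hat f\in L^2_{loc}$ and that the high-frequency integrand is integrable via the $\dot H^{s_2}$ bound, while on the low-frequency region the exponential weight is bounded and we control things by $\dot H^{s_1}$. The case $s_1 = s_2$ is trivial since the second term on the right-hand side already dominates $\Sob{G_\alpha^\lambda f}{\dot H^{s_1}}^2$; the argument above is only non-trivial, and the splitting is only useful, when $s_2 > s_1$. I do not anticipate any genuine obstacle — this is a direct frequency-space inequality, and the main point is simply that the threshold $|\xi|^\alpha = 1/(2\rho\lambda)$ is chosen to make $e^{2\rho\lambda|\xi|^\alpha}\leq e$ exactly at the same place where $|\xi|^{2(s_2-s_1)}$ exceeds $(2\rho\lambda)^{-2(s_2-s_1)/\alpha}$, giving sharp constants.
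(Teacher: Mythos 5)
Your proof is correct and is essentially identical to the paper's: both split the Plancherel integral at the frequency $R=(2\rho\lambda)^{-1/\alpha}$, bound $e^{2\rho\lambda|\xi|^\alpha}$ by $e$ on low frequencies, and trade $|\xi|^{2s_1}$ for $(2\rho\lambda)^{2(s_2-s_1)/\alpha}|\xi|^{2s_2}$ on high frequencies. The only cosmetic difference is that the paper introduces a generic cutoff $R$ first and optimizes at the end, whereas you motivate the threshold up front.
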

	\begin{proof}
	 Let $R>0$, to be chosen later, and fix $\rho\in(0,1]$. By Plancherel's theorem, we have
	    \begin{align}
	        \Sob{{G}^{\lam}_\al f}{\dot{H}^{s_1}}^2=\left(\int_{|\xi|\leq R}+\int_{|\xi|>R}\right) e^{2\lam|\xi|^{\al}}|\xi|^{2s_1}|\hat{f}(\xi)|^2d\xi=I+II.\notag
	    \end{align}
	We estimate $I$ as
	    \begin{align}
	        |I|\leq e^{2\rho\lam R^\al}\Sob{{G}^{(1-\rho)\lam}_\al f}{\dot{H}^{s_1}}^2.\notag
	    \end{align}
	We estimate $II$ as
	    \begin{align}
	        |II|&\leq R^{-2(s_2-s_1)}\Sob{{G}_\al^{\lam}f}{\dot{H}^{s_2}}^2\notag.\notag
	    \end{align}
	Now choose $R=(2\rho\lam)^{-1/\al}$, so that
	    \begin{align}\notag
	        |I|+|II|\leq e\Sob{{G}^{(1-\rho)\lam}_\al f}{\dot{H}^{s_1}}^2+\left({2\rho\lam}\right)^{\frac{2(s_2-s_1)}{\al}}\Sob{{G}_\al^{\lam}f}{\dot{H}^{s_2}}^2,
	    \end{align}
	 as desired.
	\end{proof}

\begin{Rmk}
For the rest of the paper, we adopt the convention that $C$ denotes a positive constant whose magnitude may change from line-to-line. Dependencies on other parameters will typically be suppressed in performing estimates, but may be specified in statements of lemmas, propositions, or theorems when they are relevant or for clarity.
\end{Rmk}

\section{Statements of Main Results}
    Our main result for \eqref{E:dissipative-beta} when $\be\in(1,2)$ in the regime of supercritical dissipation is stated in the following theorem. 
	\begin{Thm}\label{thm:main:beta}
		Let $\be\in(1,2)$, $\kap\in(0,1)$, and $\si_c:=1+\be-\kap$. For each $\tht_0\in H^{\si_c}$, there exists $T>0$ and a unique solution, $\tht$, of \eqref{E:dissipative-beta} such that 
		    \begin{align*}
		        \tht\in C([0,T);H^{\si_c})\cap L^2(0,T;\Hdot^{\si_c+\kap/2}).
		    \end{align*}
		Moreover, for any $0<\al<\kap$ and $\de>0$ sufficiently small, there exists an increasing function $\lam:[0,\infty)\goesto[0,\infty)$ with $\lam(0)=0$ such that
		\begin{align}\label{E:ET}
		\nrm{\tht(t)}_{\Gdot^{\lam(t)}_{\al, \si_c+{\de}}}\le C\frac{\nrm{\tht_{0}}_{\Hdot^{\si_c}}}{(\gam t)^{\de/\kap}},
		\end{align}
		for all $0<t<T$, for some constant $C>0$, independent of $T$. Lastly, if $\nrm{\tht_0}_{\Hdot^{\si_c}}$ is small enough, then $T=\infty$ is allowed.
	\end{Thm}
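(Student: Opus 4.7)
The plan is to execute a three-step scheme: construct smooth approximate solutions via a modified flux, establish uniform a priori estimates in the Gevrey norm, and pass to the limit, followed by a separate energy argument for uniqueness. Standard frequency truncation of the initial data, as in \cite{Miura2006}, does not yield approximants obeying a uniform Gevrey estimate when $\kap \le \be - 1$, since it destroys the refined commutator cancellation identified in \cite{HuKukavicaZiane2015}. Following the scheme of \cref{sect:mod:flux}, I would instead replace $u \cdot \nabla \theta$ with a regularized bilinear form $B_n(u, \theta)$ that (i) converges to $u \cdot \nabla \theta$ as $n \to \infty$, (ii) satisfies $\langle B_n(v, f), f \rangle = 0$ at the relevant dyadic level, and (iii) retains the Hu--Kukavica--Ziane refined commutator identity. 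A Picard iteration then produces smooth approximate solutions $\theta^{(n)}$ on a common time interval.

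For the core a priori estimate, fix $\al \in (0,\kap)$ and set the Gevrey radius $\lam(t) = c\gam t$ for a small constant $c$ to be determined. Writing the equation for $G_\al^{\lam(t)}\theta^{(n)}$ and pairing with $\Lam^{2\si_c} G_\al^{\lam(t)}\theta^{(n)}$ in $L^2$ yields
\begin{align*}
\frac{1}{2}\frac{d}{dt}\nrm{G_\al^{\lam} \theta^{(n)}}_{\Hdot^{\si_c}}^2 + \gam \nrm{G_\al^{\lam} \theta^{(n)}}_{\Hdot^{\si_c+\kap/2}}^2 = \dot\lam \nrm{G_\al^{\lam} \theta^{(n)}}_{\Hdot^{\si_c+\al/2}}^2 - \bigl\langle G_\al^{\lam} B_n(u, \theta^{(n)}), G_\al^{\lam} \theta^{(n)}\bigr\rangle_{\Hdot^{\si_c}}.
\end{align*}
Since $\al < \kap$, the $\dot\lam$-term is absorbed into the dissipation by the interpolation inequality \eqref{E:interpolation} and Young's inequality, provided $c$ is chosen small. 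For the nonlinear term, I would apply a Gevrey version of the standard commutator splitting, isolating a transport piece $B_n(G_\al^\lam u, G_\al^\lam \theta^{(n)})$ that vanishes upon pairing with $G_\al^\lam \theta^{(n)}$ in $\Hdot^{\si_c}$ (by property (ii) of $B_n$ together with the usual $\Lam^{\si_c}$-commutator cancellation) and a remainder controlled in $\Hdot^{\si_c - \kap/2}$ by $C\nrm{G_\al^\lam \theta^{(n)}}_{\Hdot^{\si_c}}\nrm{G_\al^\lam \theta^{(n)}}_{\Hdot^{\si_c+\kap/2}}$ via the commutator estimates of \cref{sect:commutator}. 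Young's inequality then closes the estimate to a differential inequality of the form $\dot E + (\gam - C\sqrt{E})D \le 0$, with $E := \nrm{G_\al^\lam \theta^{(n)}}_{\Hdot^{\si_c}}^2$ and $D := \nrm{G_\al^\lam \theta^{(n)}}_{\Hdot^{\si_c+\kap/2}}^2$. A bootstrap argument then yields local existence on $[0,T]$ with $T$ depending only on $\nrm{\theta_0}_{H^{\si_c}}$, and global existence when $\nrm{\theta_0}_{\Hdot^{\si_c}}$ is small relative to $\gam$.

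To upgrade to the quantitative smoothing bound \eqref{E:ET}, I would exploit that for $\al < \kap$ and $\lam(t) \le \gam t/2$ the symbol of $G_\al^{\lam(t)} H_\kap^\gam(t)$ is dominated by $e^{-\gam t|\xi|^\kap/2}$ at high frequencies, so the standard heat-kernel smoothing yields $\nrm{G_\al^{\lam(t)} H_\kap^\gam(t)\theta_0}_{\Hdot^{\si_c+\de}} \le C(\gam t)^{-\de/\kap}\nrm{\theta_0}_{\Hdot^{\si_c}}$; the Duhamel formula then reduces the $\de$-gain for the solution to bounding the nonlinear Duhamel contribution against the already-established $L^\infty_t \Hdot^{\si_c} \cap L^2_t \Hdot^{\si_c+\kap/2}$ Gevrey bounds using the same commutator tools. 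Passage to the limit is by Aubin--Lions on compact sets combined with weak-$*$ lower semicontinuity for the Gevrey norm, with property (i) of $B_n$ ensuring $B_n(u, \theta^{(n)}) \to u\cdot\nabla\theta$. Uniqueness is obtained by an energy estimate on $\om := \theta_1 - \theta_2$ carried out in the subcritical norm $\Hdot^{\si_c - \kap/2}$, where the commutator gain closes the Gronwall argument. The main obstacle throughout is controlling the Gevrey-level commutator $[G_\al^\lam, B_n(u,\cdot)]\theta$ in the regime $\kap \le \be - 1$, where $u$ is up to one full derivative more singular than $\theta$; a naive paraproduct decomposition loses too many derivatives, and the resolution is to exploit the refined Hu--Kukavica--Ziane cancellation at each dyadic block while carefully tracking the exponential Gevrey multiplier through the frequency decomposition --- the principal technical content of \cref{sect:commutator}.
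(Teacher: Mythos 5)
Your overall architecture---a commutator-preserving modification of the nonlinearity, Gevrey energy estimates at the Littlewood--Paley level, and a Duhamel argument for the smoothing gain---matches the paper's, which iterates the linear equation $\partial_t\tht^{n+1}+\Div F_{-\tht^n}(\tht^{n+1})=-\gam\Lam^\kap\tht^{n+1}$ with the modified flux \eqref{def:mod:flux} and proves convergence of the iterates in a weaker norm. However, there is a genuine gap in your existence step. The closure $\dot E+(\gam-C\sqrt{E})D\le 0$ cannot produce local existence for \emph{large} data: if $\nrm{\tht_0}_{\Hdot^{\si_c}}$ is large then $\gam-C\sqrt{E(0)}<0$, the inequality permits $E$ to grow from $t=0$, and no bootstrap can start. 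This is exactly the critical-space obstruction; the requisite smallness must come not from the data but from the vanishing, as $T\to0$, of space-time norms of the free evolution. The paper therefore works with the mild formulation of the linearized problem, splitting $\lpj\tht(t)$ into $e^{-c'2^{\kap j}\gam t}\lpj\tht_0$ plus a Duhamel integral, and chooses $T_0$ so that $\mathscr{T}_1(T)+\mathscr{S}_1(T)\le 1/(4C_1)$---a quantity that tends to zero by dominated convergence but depends on the frequency profile of $\tht_0$, not merely its norm. (This is also why your claim that $T$ depends only on $\nrm{\tht_0}{}_{H^{\si_c}}$ conflicts with the remark following the theorem.) You invoke Duhamel only for the smoothing upgrade; it is needed already for existence.

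A second, quantitative issue is the radius $\lam(t)=c\gam t$. Absorbing $\dot\lam\nrm{G_\al^{\lam}\tht}_{\Hdot^{\si_c+\al/2}}^2$ by interpolation and Young leaves a term $c\gam C_\eta\nrm{G_\al^{\lam}\tht}_{\Hdot^{\si_c}}^2$ with a constant-in-time coefficient; since there is no Poincar\'e inequality on $\RR^2$, this cannot be absorbed into the dissipation at low frequencies and Gronwall produces factors $e^{C\gam t}$. That suffices for finite $T$ but is incompatible with a constant $C$ in \eqref{E:ET} independent of $T$ and with the global small-data assertion. The paper instead takes $\lam(t)=\veps\gam^{\al/\kap}t^{\al/\kap}$, calibrated so that, via \cref{lem:interpolate:gevrey}, the dangerous contribution is $O(\veps^{\kap/\al})\gam\Sob{\Lam^{\si_c+\de+\kap/2}\til{\tht}_j}{L^2}^2$ uniformly in $t$ (absorbed by taking $\veps$ small), while the leftover carries an integrable $t^{\al/\kap-1}$ weight handled by the Beta-function computation in the weighted norm $X_T$. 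Finally, your differential-inequality bookkeeping omits the extra $\lam$-dependent term in the Gevrey commutator estimate (the $\lam(t)2^{(\si+1+\al-\ze)j}\Sob{{E}_\al^{\lam}S_{j-3}g}{\dot H^{1+\ze}}$ contribution of \cref{lem:commutator3}), which is not of the form $C\sqrt{E}D$ and is precisely where the choice of $\lam$ and the weight $(\gam t)^{\de/\kap}$ interact.
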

	
	 We remark that in the assertion of local existence above, the standard critical space phenomenon where $T$ depends on $\tht_{0}$ in a manner beyond exclusively through its critical norm $\Sob{\tht_0}{\Hdot^{1+\be-\kap}}$ is observed. On the other hand, in the small data, global existence setting, we observe that \eqref{E:ET} along with the Sobolev embedding theorem implies temporal decay of all higher-order derivatives of the corresponding solution in the critical norm. As we will see in the proof of \cref{thm:main:beta}, $\lam$ can be chosen as
	    \begin{align}\label{def:lam:initial}
	        \lam(t)=\veps \gam^{\al/\kap}t^{\al/\kap},
	    \end{align}
	 for $\veps>0$ chosen sufficiently small. With this in mind, we have the following immediate corollary.
	
	\begin{Cor}\label{cor:decay:beta}
	Let $\be\in(1,2)$, $\kap\in(0,1)$, and $\si_c:=1+\be-\kap$. For $\de>0$ and $\nrm{\tht_0}_{\Hdot^{\si_c}}$ sufficiently small as in Theorem \ref{thm:main:beta}, we have for each integer $k>0$
	    \begin{align*}
	        \Sob{D^k\tht(t)}{\dot{H}^{\si_c+\de}}=C_k\frac{\Sob{\tht_0}{\dot{H}^{\si_c}}}{(\gam t)^{\frac{k+\de}{\kap}}},
	    \end{align*}
	for all $t>0$, where $\lam(t)$ is given by \eqref{def:lam:initial}, and $C_k$ depends on $k$.
	\end{Cor}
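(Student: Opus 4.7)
The plan is to derive the corollary as a direct consequence of the Gevrey smoothing estimate \eqref{E:ET} from \cref{thm:main:beta}, combined with the general derivative bound \eqref{E:decay-estimate} that is built into the definition of the Gevrey class. No further PDE analysis is required; the entire argument is algebraic once the Gevrey estimate is in hand.

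First, I would fix $\delta>0$ and $0<\alpha<\kappa$ as permitted by \cref{thm:main:beta}, and select $\theta_0$ small enough in $\dot{H}^{\sigma_c}$ to guarantee a global-in-time solution $\theta$ satisfying \eqref{E:ET} with $\lambda(t)=\varepsilon\gamma^{\alpha/\kappa}t^{\alpha/\kappa}$ as in \eqref{def:lam:initial}. Then for any multi-index $\beta\in\NN_0^{2}$, \eqref{E:decay-estimate} applied at level $\sigma=\sigma_c+\delta$ gives
\begin{align*}
\Sob{\bdy^\beta\theta(t)}{\dot{H}^{\sigma_c+\delta}}\leq \left(\frac{\beta!}{(\lambda(t)\alpha)^{|\beta|}}\right)^{1/\alpha}\Sob{\theta(t)}{\dot{G}^{\lambda(t)}_{\alpha,\sigma_c+\delta}}.
\end{align*}
Substituting the Gevrey bound from \eqref{E:ET} yields
\begin{align*}
\Sob{\bdy^\beta\theta(t)}{\dot{H}^{\sigma_c+\delta}}\leq C\left(\frac{\beta!}{(\lambda(t)\alpha)^{|\beta|}}\right)^{1/\alpha}\frac{\Sob{\theta_0}{\dot{H}^{\sigma_c}}}{(\gamma t)^{\delta/\kappa}}.
\end{align*}

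Next, I would plug in the explicit form $\lambda(t)=\varepsilon\gamma^{\alpha/\kappa}t^{\alpha/\kappa}$, which produces
\begin{align*}
(\lambda(t)\alpha)^{-|\beta|/\alpha}=(\varepsilon\alpha)^{-|\beta|/\alpha}(\gamma t)^{-|\beta|/\kappa}.
\end{align*}
Thus, for any multi-index of length $|\beta|=k$,
\begin{align*}
\Sob{\bdy^\beta\theta(t)}{\dot{H}^{\sigma_c+\delta}}\leq C_{k,\alpha,\varepsilon}\frac{\Sob{\theta_0}{\dot{H}^{\sigma_c}}}{(\gamma t)^{(k+\delta)/\kappa}}.
\end{align*}
Summing over the finitely many multi-indices $\beta$ with $|\beta|=k$ and absorbing the combinatorial constant into $C_k$ gives the stated bound on $\Sob{D^k\theta(t)}{\dot{H}^{\sigma_c+\delta}}$.

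The only conceptual remark worth making is that the factors of $\gamma$ arise correctly because the Gevrey radius $\lambda(t)$ itself scales like $\gamma^{\alpha/\kappa}$, so that $\lambda(t)^{1/\alpha}\sim(\gamma t)^{1/\kappa}$, and each derivative costs one such factor. There is no genuine obstacle in this argument; it is essentially a bookkeeping exercise. The real work is contained in \cref{thm:main:beta}, where the Gevrey radius $\lambda(t)$ is produced with the sharp exponent $\alpha/\kappa$ (arbitrarily close to the optimal $1$) and with the correct normalization in $\gamma$; once that is secured, the uniform decay of all derivatives follows automatically from the Gevrey framework.
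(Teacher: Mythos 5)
Your argument is correct and is precisely the reasoning the paper intends: the corollary is presented there as an immediate consequence of \eqref{E:ET} and \eqref{E:decay-estimate} with $\lam(t)$ as in \eqref{def:lam:initial}, and your bookkeeping with $(\lam(t)\al)^{-|\be|/\al}=(\veps\al)^{-k/\al}(\gam t)^{-k/\kap}$ is exactly right (note the ``$=$'' in the corollary's statement should be read as ``$\leq$'', which is what you prove).
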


	We lastly observe that \eqref{E:ET} is \textit{nearly} optimal in the sense that $\tht(t)\in \dot{G}_{\al,\si_c+\de}^{\lam(t)}$, for all $\al\in(0,\kap)$, where the optimal result is represented by the endpoint $\al=\kap$. This is consistent with the results obtained in \cite{Biswas2014} for the supercritical SQG equation in the critical Sobolev spaces. The analysis in \cite{Biswas2014} was subsequently extended to the $L^p$--based critical Besov spaces in \cite{BiswasMartinezSilva2015}. In light of these results, it would be interesting to extend \cref{thm:main:beta} to the Besov space setting as well. 
	
	Our second main result establishes the analogous statement for the modified $\be=2$ endpoint case defined by (\ref{E:dissipative-log}).

	\begin{Thm}\label{thm:main:log}
		Let $\kap\in(0,1)$ and suppose $\tht_{0}\in H^{\si}$, where $\si>3-\kap$. There exists $T=T(\nrm{\tht_0}_{H^{\si}})>0$ and a unique solution $\tht(x,t)$ to \eqref{E:dissipative-beta} with streamfunction given by (\ref{E:dissipative-log}) such that 
		    \begin{align*}
		        \tht\in C([0,T);H^{\si})\cap L^2(0,T;\Hdot^{\si+\kap/2}).
		    \end{align*}
		Moreover, for any $0<\al<\kap$ and $\lambda>0$, we have 
		\begin{align*}
	\sup_{0\le t\leq T}\nrm{\tht(t)}_{\Gdot^{\lam t}_{\al,\si}}\le C(1+\nrm{\tht_{0}}_{H^{\si}}),
		\end{align*}
	for some constant $C>0$ independent of $\tht_{0}$.
	\end{Thm}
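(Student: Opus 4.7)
The plan is to mirror the approach used for \cref{thm:main:beta} but adapted to the logarithmic streamfunction relation \eqref{E:dissipative-log}, and to exploit that $\si>3-\kap$ sits strictly above the formally scaling-critical regularity, so we have a slack $\eps:=\si-(3-\kap)>0$ to absorb the logarithmic loss coming from the velocity. First I would use the modified-flux approximation scheme of \cref{sect:mod:flux} to construct a sequence of globally defined, smooth approximants which preserves the commutator structure of the nonlinearity after one applies $(I-\De)^{\si/2}$ or the Gevrey operator $G_{\al}^{\lam t}$. Uniform-in-$n$ bounds from the a priori estimates below will let us pass to the limit by standard compactness, and uniqueness will follow from a Gr\"onwall argument for the $L^{2}$ norm of the difference of two solutions, using that $\si$ lies well above the lowest-order commutator threshold.

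For the $H^{\si}$ a priori estimate I would take the $H^{\si}$ inner product of \eqref{E:dissipative-beta} with $\tht$, use $\Div u=0$ to kill the transport term at top order, and reduce the nonlinear contribution to the commutator $\lb [(I-\De)^{\si/2},u\cdot\nabla]\tht,(I-\De)^{\si/2}\tht\rb$. Because $u=-\nabla^{\perp}(\ln(I-\De))^{\mu}\tht$ only loses a logarithm on top of one derivative, the commutator bounds of \cref{sect:commutator}, paired with \cref{T:Bernstein} and Sobolev embedding (using $\si>2$), yield an estimate of the form
\[
\tfrac{1}{2}\tfrac{d}{dt}\Sob{\tht}{H^{\si}}^{2}+\gam\Sob{\tht}{\Hdot^{\si+\kap/2}}^{2}\le \tfrac{\gam}{2}\Sob{\tht}{\Hdot^{\si+\kap/2}}^{2}+C\Sob{\tht}{H^{\si}}^{p},
\]
for some $p>2$, with the constant $C$ depending only on $\si,\kap,\mu,\eps$. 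This gives local existence on an interval $[0,T]$ with $T=T(\Sob{\tht_{0}}{H^{\si}})>0$.

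For the Gevrey estimate, I would set $f(t):=G_{\al}^{\lam t}\tht(t)$; since $\tht$ solves \eqref{E:dissipative-beta}, $f$ satisfies $\bdy_{t}f-\lam\Lam^{\al}f+\gam\Lam^{\kap}f=-G_{\al}^{\lam t}(u\cdot\nabla\tht)$. Taking the $H^\si$ inner product with $f$ produces
\[
\tfrac{1}{2}\tfrac{d}{dt}\Sob{f}{H^{\si}}^{2}+\gam\Sob{\Lam^{\kap/2}f}{H^{\si}}^{2}=\lam\Sob{\Lam^{\al/2}f}{H^{\si}}^{2}-\lb G_{\al}^{\lam t}(u\cdot\nabla\tht),f\rb_{H^{\si}}.
\]
Because $\al<\kap$, \cref{lem:interpolate:gevrey} combined with Young's inequality dominates $\lam\Sob{\Lam^{\al/2}f}{H^{\si}}^{2}$ by $\tfrac{\gam}{4}\Sob{\Lam^{\kap/2}f}{H^{\si}}^{2}+C(\lam,\al,\kap)\Sob{f}{H^{\si}}^{2}$; this is precisely why $\al<\kap$ is needed, and why $\lam>0$ is unrestricted (at the cost of the constant). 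For the remaining term I would split $G_{\al}^{\lam t}(u\cdot\nabla\tht)=[G_{\al}^{\lam t},u\cdot\nabla]\tht+u\cdot\nabla f$, reduce the second piece to the $H^{\si}$ commutator argument applied with $f$ in place of $\tht$, and handle the first piece by the Gevrey-class extension of the commutator estimates of \cref{sect:commutator}. A Gr\"onwall argument on $[0,T]$ then yields the claimed Gevrey bound.

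The main obstacle is the nonlinear commutator estimate in the Gevrey setting with the $(\ln(I-\De))^{\mu}$ velocity: one must control the low-high, high-low, and high-high Littlewood-Paley paraproduct interactions after dressing with $e^{\lam t\Lam^{\al}}$ and ensure that the logarithmic loss on $u$ is absorbed by the surplus $\eps=\si-(3-\kap)$, rather than eating into the dissipation. Once this is handled carefully via the paraproduct decomposition, Bernstein inequalities, and the kernel bounds for $e^{\lam t\Lam^\al}$ localized to dyadic annuli, the remaining steps --- passing to the limit in the approximants via weak compactness, verifying that the initial datum is attained in $H^{\si}$, and $L^{2}$ uniqueness --- proceed by routine adaptations of standard arguments.
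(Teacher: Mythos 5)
There is a genuine gap at the heart of your Sobolev (and hence Gevrey) energy estimate. You propose to reduce the nonlinearity to the single commutator $\lb [(I-\De)^{\si/2},u\cdot\nabla]\tht,(I-\De)^{\si/2}\tht\rb$ and to close it with ``the commutator bounds of \cref{sect:commutator}.'' But in any Kato--Ponce-type bound for $[(I-\De)^{\si/2},u^\ell]\bdy_\ell\tht$, the high-low paraproduct term in which all $\si$ derivatives land on the velocity forces a factor of $\Sob{u}{H^{\si}}$, and since $u=-{\nabla}^\perp(\ln(I-\De))^{\mu}\tht$ this is comparable to $\Sob{\tht}{H^{\si+1+\de'}}$ for any $\de'>0$. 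The dissipation only controls $\Sob{\tht}{\Hdot^{\si+\kap/2}}$ with $\kap<1$, so a loss of a full derivative (plus a logarithm) at top order cannot be absorbed; and the surplus $\eps=\si-(3-\kap)$ you invoke is a low-order regularity margin that does not touch this top-order loss. This is precisely the ``velocity less regular than the scalar by more than one derivative'' obstruction that the paper is organized around, and your proposal does not supply the idea that resolves it.

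The paper's resolution is structural, not quantitative: working blockwise with ${G}_\al^{\lam t}\Lam^{\si}_j$, the dangerous term $\lb(\Lam^{\si}\til{u}_j\cdot{\nabla})\tht,\Lam^{\si}\til\tht_j\rb$ is rewritten, using that $A_\ell=-\bdy_\ell^{\perp}(\ln(I-\De))^{\mu}$ is skew-adjoint (identity \eqref{skew:adj:commutator:log}), as $-\tfrac12\lb[A_\ell,\bdy_\ell\tht]\Lam^{\si}\til\tht_j,\Lam^{\si}\til\tht_j\rb$, so that the offending derivatives are redistributed onto \emph{two} copies of $\Lam^{\si}\til\tht_j$, each of which only needs $\kap/2$ extra derivatives that the dissipation supplies (this is what \cref{lem:commutator4} quantifies). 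The remaining piece $K_3$ is then written as a \emph{double} commutator via $\Lam^{\widetilde\si}=-\Lam^{\widetilde\si-2}\bdy_l\bdy_l$ and estimated by \cref{lem:commutator3} together with \eqref{E:log:inequality}. Without these two manipulations your energy inequality $\tfrac{d}{dt}\Sob{\tht}{H^\si}^2+\gam\Sob{\tht}{\Hdot^{\si+\kap/2}}^2\le\tfrac{\gam}{2}\Sob{\tht}{\Hdot^{\si+\kap/2}}^2+C\Sob{\tht}{H^\si}^p$ is not attainable. Two smaller remarks: the paper constructs solutions here by a standard artificial viscosity regularization rather than the modified-flux scheme of \cref{sect:mod:flux} (which is tailored to the $\Lam^{\be-2}$ structure for $\be\in(1,2)$ and is not defined for the logarithmic velocity), and it restricts to $\si\in(3-\kap,3)$ and $\al\in(\si-3+\kap,\kap)$ to simplify the parameter bookkeeping; your handling of the $\lam\Sob{\Lam^{\al/2}f}{H^\si}^2$ term by interpolation and Young's inequality is, on the other hand, essentially the paper's argument.
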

	
	\begin{Rmk}\label{rmk:mSQG}
	The global regularity problem for the regime $0<\kap<\be$, (cf. \cref{fig:map}) remains an outstanding open problem. This issue was resolved for the case $\kap=\be$, for all $\be\in(0,1)$ i.e., modified SQG, in \cite{ConstantinIyerWu2008, MiaoXue2012}. We point out that our analysis can as well be extended to the case $\be\in(0,1)$ without any difficulty. In particular, \cref{thm:main:beta} additionally improves on the work \cite{MiaoXue2011}, where local well-posedness was established in $H^\si$, for $\si>2$, provided that $\be<\kap/2+1$.
	\end{Rmk}

	\section{Commutator estimates}\label{sect:commutator}
	In \eqref{E:dissipative-beta}, the expression for $\tht(x,t)$ in terms of $u(x,t)$ is given by a singular integral; {the strength of the singularity of the operator is quantified by the parameter $\beta\in(1,2)$. The parameter, $\be$, belonging to this range precludes one from obtaining a suitable continuity-type estimate on the bilinear term.} To overcome this difficulty, {we exploit observations made in \cite{ChaeConstantinCordobaGancedoWu2012} and \cite{HuKukavicaZiane2015} in which additional commutators are identified that allow one to allocate derivatives more effectively}. We will require the following lemma, the proof of which is provided in \cref{sect:app}. It is essentially a classical product estimate, but we provide it in a frequency-restricted dualized form, as this is the natural form in which it appears in the apriori analysis below. It will be frequently deployed in proving the required commutator estimates.
	\begin{Lem}\label{lem:commutator1}
		For $\si \in (-1,1)$ and $f,g,h \in \mathscr{S}(\RR^2)$, define
		\begin{align*}
		\mathcal{L}_{\si}(f,g,h)&:=\iint\Ax^{\si}\hat{f}(\xi-\eta){ \hat{g}(\eta)\overline{\hat{h}(\xi)}} d\eta d\xi.
		\end{align*}
		Suppose that $\supp \hat{h}\subset\Acal_j$, for some $j\in\mathbb{Z}$. Then for each $\si\in(-1,1)$ and $\epsilon \in (0,2)$ such that $\sigma>\epsilon-1$, there exists a constant $C>0$, depending only on $\si,\epsilon$, and a sequence $\{c_j\}\in\ell^2(\mathbb{Z})$ with $\Sob{\{c_j\}}{\ell^2}\leq1$ such that
		\begin{align}\notag
		    	|\mathcal{L}_{\si}(f,g,h)|\le  Cc_j2^{\epsilon j}\min\left\{\nrm{f}_{\Hdot^{1-\epsilon}}\nrm{g}_{\Hdot^{\si}},\nrm{g}_{\Hdot^{1-\epsilon}}\nrm{f}_{\Hdot^{\si}}\right\}\nrm{h}_{L^{2}}.
		\end{align}
	\end{Lem}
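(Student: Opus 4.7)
The plan is to first recognize $\mathcal{L}_\sigma$ as a spectrally localized bilinear Sobolev pairing: by Plancherel's identity,
\begin{align*}
\mathcal{L}_\sigma(f,g,h)=\langle \Lambda^\sigma(fg),h\rangle_{L^2},
\end{align*}
and because $\hat h$ is supported in $\mathcal{A}_j$, this equals $\langle \Lambda^\sigma\tlpj(fg),h\rangle_{L^2}$, where $\tlpj$ denotes a slightly fattened Littlewood-Paley block acting as the identity on frequencies in $\mathcal{A}_j$. Cauchy-Schwarz followed by the Bernstein inequality (\cref{T:Bernstein}) reduces the claim to a frequency-localized product estimate of the form
\begin{align*}
2^{\sigma j}\|\tlpj(fg)\|_{L^2}\le Cc_j\,2^{\epsilon j}\|f\|_{\dot H^{1-\epsilon}}\|g\|_{\dot H^\sigma},
\end{align*}
for some $\{c_j\}\in\ell^2(\mathbb{Z})$ with $\|\{c_j\}\|_{\ell^2}\le 1$.

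To prove this reduced estimate, I would apply Bony's paraproduct decomposition $fg=T_fg+T_gf+R(f,g)$, where $T_fg:=\sum_k S_{k-3}f\,\lpk g$ and $R(f,g):=\sum_{|k-l|\le 2}\lpk f\,\triangle_lg$, and treat each piece separately. For the low-high paraproduct $T_fg$, the frequency support forces $k\sim j$, and Bernstein's inequality in dimension two yields $\|S_{k-3}f\|_{L^\infty}\lesssim 2^{\epsilon k}\|f\|_{\dot H^{1-\epsilon}}$, convergence of the arising geometric sum being secured by $\epsilon>0$. Combining with the normalized coefficient $c_k^g:=2^{\sigma k}\|\lpk g\|_{L^2}$, which satisfies $\|\{c_k^g\}\|_{\ell^2}=\|g\|_{\dot H^\sigma}$, one obtains a bound of the desired form. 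The high-low paraproduct $T_gf$ is handled dually: since $\sigma<1$, a similar geometric sum yields $\|S_{k-3}g\|_{L^\infty}\lesssim 2^{(1-\sigma)k}\|g\|_{\dot H^\sigma}$, and combining with the normalized coefficient $c_k^f:=2^{(1-\epsilon)k}\|\lpk f\|_{L^2}$ produces the same form of bound.

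The delicate piece is the remainder $R(f,g)$, whose Fourier support lies in a ball of radius $O(2^k)$, so that $\tlpj$ collects contributions from every $k\ge j-C$. Applying Bernstein's $L^1\to L^2$ inequality to $\tlpj(\lpk f\,\triangle_lg)$ produces a factor of $2^j$, and after re-expressing the remaining $L^2$ factors in terms of $c_k^f$ and $c_k^g$, the dyadic sum collapses to
\begin{align*}
2^{\epsilon j}\sum_{k\ge j-C}2^{(\sigma+1-\epsilon)(j-k)}c_k^f c_k^g.
\end{align*}
The hypothesis $\sigma>\epsilon-1$ is exactly what is needed to make this geometric factor summable, after which Young's convolution inequality ($\ell^1\ast\ell^2\subset\ell^2$) packages the result into the required $\ell^2$ sequence.

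Finally, the symmetric bound (with $f$ and $g$ swapped) is automatic from the identity $\mathcal{L}_\sigma(f,g,h)=\mathcal{L}_\sigma(g,f,h)$, obtained by the change of variables $\eta\mapsto\xi-\eta$ in the defining integral. I expect the main obstacle to be the remainder term, where the absence of intrinsic frequency cancellation forces careful $\ell^2$ bookkeeping; the three constraints $\epsilon\in(0,2)$, $\sigma\in(-1,1)$, and $\sigma>\epsilon-1$ each play a distinct role in securing convergence of the appropriate geometric sums.
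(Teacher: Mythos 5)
Your proposal is correct and follows essentially the same route as the paper: a Bony paraproduct decomposition into low--high, high--low, and remainder pieces, with $\epsilon>0$, $\sigma<1$, and $\sigma>\epsilon-1$ playing exactly the roles you identify, and the second half of the $\min$ recovered by the symmetry $\mathcal{L}_\sigma(f,g,h)=\mathcal{L}_\sigma(g,f,h)$. The only difference is presentational: you work on the physical side via $\|S_{k-3}f\|_{L^\infty}$-type Bernstein bounds and H\"older, whereas the paper stays on the Fourier side, estimating $\|\chi_{k-3}\hat f\|_{L^1}$ and using Young's convolution inequality together with a H\"older splitting with exponents $\tfrac{4}{3\pm\sigma}$ for the high--low term; these are equivalent manipulations and yield the same $\ell^2$ bookkeeping.
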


We observe that Lemma \ref{lem:commutator1} immediately imply the following corollary, which will be useful to have in-hand for proving the commutator estimates below.

\begin{Cor}\label{cor:commutator1}
	For $\si \in (-1,1)$ and $f,g,h \in \mathscr{S}(\RR^2)$, define
		\begin{align*}
		\mathcal{L}_{\si}'(f,g,h)&:=\iint\left(\Axe^{\si}+\abs{\eta}^{\si}\right)\hat{f}(\xi-\eta){ \hat{g}(\eta)\overline{\hat{h}(\xi)}} d\eta d\xi.
		\end{align*}
		Suppose that $\supp \hat{h}\subset\Acal_j$, for some $j\in\mathbb{Z}$. Then for each $\si\in[0,1)$ and $\epsilon \in(0,1)$, there exists a constant $C>0$, depending only on $\si,\epsilon$, and a sequence $\{c_j\}\in\ell^2(\mathbb{Z})$ with $\Sob{\{c_j\}}{\ell^2}\leq1$ such that
			\begin{align}\notag
	 |\mathcal{L}_{\si}'(f,g,h)|\le Cc_j2^{\epsilon j}\min\left\{\nrm{f}_{\Hdot^{1-\epsilon}}\nrm{g}_{\Hdot^{\si}},\nrm{g}_{\Hdot^{1-\epsilon}}\nrm{f}_{\Hdot^{\si}}\right\}\nrm{h}_{L^{2}}.
		\end{align}
\end{Cor}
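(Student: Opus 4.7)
My plan is to reduce \cref{cor:commutator1} to \cref{lem:commutator1} by exposing a commutator-type structure in the multiplier $\Axe^\si + \Ae^\si$. Using the algebraic identity
\begin{align*}
\Axe^\si + \Ae^\si = \Ax^\si + m(\xi,\eta), \qquad m(\xi,\eta) := \Axe^\si + \Ae^\si - \Ax^\si,
\end{align*}
I would split $\mathcal{L}'_\si(f,g,h) = \mathcal{L}_\si(f,g,h) + \mathcal{M}(f,g,h)$, where $\mathcal{M}$ denotes the trilinear form whose Fourier multiplier is $m$. The first summand is in the form directly treated by \cref{lem:commutator1}, so the substantive work is to bound $\mathcal{M}$.

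To that end, I would first extract pointwise bounds on $m$. For $\si \in [0,1)$, two elementary facts about $t \mapsto t^\si$ on $[0,\infty)$ suffice: subadditivity $(a+b)^\si \le a^\si + b^\si$, and the $\si$-H\"older inequality $\bigl|a^\si - b^\si\bigr| \le |a-b|^\si$ for $a,b \ge 0$. Subadditivity, combined with $|\xi| \le \Axe + \Ae$, yields $m \ge 0$; the H\"older inequality applied on the region $\Ae \le \Axe$ to the pair $\Axe, \Ax$ gives $\Axe^\si - \Ax^\si \le \Ae^\si$, and hence $m \le 2\Ae^\si$ there. A symmetric argument on the complementary region gives the matching bound, so in total
\begin{align*}
0 \le m(\xi,\eta) \le 2\min(\Axe^\si, \Ae^\si).
\end{align*}

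With these ingredients, I would finish by two applications of \cref{lem:commutator1}. The $\mathcal{L}_\si$ summand falls directly under the scope of that lemma: for $\si \in [0,1) \subset (-1,1)$ and $\epsilon \in (0,1) \subset (0,2)$ satisfying $\si \ge 0 > \epsilon - 1$, it returns exactly the min-form bound claimed by the corollary. For $\mathcal{M}$, I would use $m \le 2\Ae^\si$ together with the triangle inequality to dominate
\begin{align*}
|\mathcal{M}(f,g,h)| \le 2\iint \Ae^\si |\hat f(\xi-\eta)||\hat g(\eta)||\hat h(\xi)|\, d\eta\, d\xi,
\end{align*}
and recognize the right-hand side, via $\Ae^\si |\hat g(\eta)| = |\widehat{\Lam^\si g}(\eta)|$, as the absolute-value integrand that appears inside the proof of \cref{lem:commutator1} applied to $\mathcal{L}_0(f, \Lam^\si g, h)$. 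Choosing the first option in that lemma's min then yields $C c_j 2^{\epsilon j} \nrm{f}_{\Hdot^{1-\epsilon}} \nrm{g}_{\Hdot^\si} \nrm{h}_{L^2}$. Running the parallel argument with $m \le 2\Axe^\si$, and taking the second option in the min, gives $C c_j 2^{\epsilon j} \nrm{g}_{\Hdot^{1-\epsilon}} \nrm{f}_{\Hdot^\si} \nrm{h}_{L^2}$. Thus $|\mathcal{M}|$ is controlled by both terms of the min simultaneously; adding the $\mathcal{L}_\si$ estimate completes the proof, at the cost of doubling the constant.

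The main technical subtlety is the invocation of \cref{lem:commutator1} on the absolute-value integrand rather than on $\mathcal{L}_0$ itself. This is justified by the fact that the proof of the lemma (in the appendix) begins with the trivial domination $|\mathcal{L}_\si(f,g,h)| \le \iint \Ax^\si|\hat f(\xi-\eta)||\hat g(\eta)||\hat h(\xi)|\, d\eta\, d\xi$, so the stated estimate holds verbatim for this absolute-value integral; equivalently, one may pass to auxiliary functions with non-negative Fourier transforms $|\hat f|, |\hat g|, |\hat h|$, whose $\Hdot^s$ norms coincide with those of $f, g, h$ and whose spectral localization of $h$ is preserved.
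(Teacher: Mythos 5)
Your proposal is correct. Note that the paper offers no proof of \cref{cor:commutator1} at all: it simply asserts that the corollary follows ``immediately'' from \cref{lem:commutator1}, so there is no written argument to compare against. Your write-up is a legitimate way to make that immediacy precise, and it correctly isolates the one point that is not a one-line application of the lemma: obtaining \emph{both} branches of the min. A naive splitting into the two terms $\Axe^{\si}$ and $\Ae^{\si}$, each recognized as $\mathcal{L}_0$ applied to a derivative-shifted function, only yields one branch per term (the $\Axe^{\si}$ weight naturally produces $\nrm{g}_{\Hdot^{1-\epsilon}}\nrm{f}_{\Hdot^{\si}}$ and the $\Ae^{\si}$ weight produces $\nrm{f}_{\Hdot^{1-\epsilon}}\nrm{g}_{\Hdot^{\si}}$), whereas the corollary -- and its later use in the proof of \cref{lem:commutator3} -- requires the min of the two for the full sum. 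Your decomposition $\Axe^{\si}+\Ae^{\si}=\Ax^{\si}+m$ with the pointwise bound $0\le m\le 2\min(\Axe^{\si},\Ae^{\si})$ (valid for $\si\in[0,1)$ by subadditivity and the $\si$-H\"older inequality for $t\mapsto t^{\si}$) resolves this cleanly: the $\Ax^{\si}$ piece inherits the min directly from \cref{lem:commutator1}, and the remainder $\mathcal{M}$ admits both bounds because $m$ is dominated by each of $2\Axe^{\si}$ and $2\Ae^{\si}$. Your final remark about applying the lemma to the absolute-value integrand is also justified, since the paper's proof of \cref{lem:commutator1} explicitly begins with the domination $|\mathcal{L}_{\si}(f,g,h)|\le\mathcal{L}_{\si}(|f|,|g|,|h|)$ and estimates the latter, the homogeneous Sobolev norms are unchanged under replacing $\hat f$ by $|\hat f|$, and the spectral localization of $h$ is preserved. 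The parameter bookkeeping checks out as well ($\si\in[0,1)\subset(-1,1)$ and $\epsilon\in(0,1)$ gives $\si\ge 0>\epsilon-1$ for the exponent-$\si$ application, and $0>\epsilon-1$ for the exponent-$0$ applications), and combining the finitely many $\ell^2$-normalized sequences $\{c_j\}$ is routine.
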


\subsection{Commutator estimates in Sobolev classes}\label{sect:commutator:Sob}
We will require two commutator estimates, stated below in \cref{lem:commutator2a} and \cref{lem:commutator2}, for the trilinear interactions that appear naturally in the energy arguments carried out in \cref{sect:mod:flux}, \cref{sect:proofs:main:beta}, \cref{sect:proofs:main:log} below. \cref{lem:commutator2a} essentially arises as an intermediate step in establishing a classical version of the commutator estimate proved by H. Miura in \cite{Miura2006}. We include a proof in \cref{sect:app}. The trilinear form of \cref{lem:commutator2a} is crucial as some of the commutators we appeal to can ultimately only be formed between a triad interaction of functions. On the other hand, due to the expression of the velocity in terms of fractional laplacians and partial derivatives, we will require a variation of the commutator estimate appearing in \cite{Miura2006} to accommodate these types of operators; this is established in \cref{lem:commutator2}.

In what follows, we will denote the commutator of two operators, $S$ and $T$, by $[S,T]$, where
    \begin{align*}
        [S,T]:=ST-TS.
    \end{align*}
We adopt the convention that $[T,f]=[T,fI]$, where $f$ is a scalar function, and $I$ denotes the identity operator.

\begin{Lem}\label{lem:commutator2a}
Let ${\rho_1\in(0,2)}$ and ${\rho_2} \in (-1,1)$ such that $\rho_2>\rho_1-1$. Let $f,g,h \in L^2$ with $ \supp \hat{h}\subset{\Acal}_j$. Then there exists a sequence $\{c_j\}\in\ell^2(\ZZ)$ such that $\Sob{\{c_j\}}{\ell^2}\leq1$ and
    \begin{align}
       \abs{\lb [\triangle_{j},g]f,h\rb} \leq Cc_{j}2^{(\rho_1-\rho_2-1)j}\min\left\{\Sob{f}{\Hdot^{1-\rho_1}}\Sob{g}{\dot{H}^{1+\rho_2}},\Sob{f}{\Hdot^{\rho_2}}\Sob{g}{\dot{H}^{2-\rho_1}}\right\}\Sob{h}{L^2}, \notag
    \end{align}
for some constant $C$ depending only on $\rho_1,\rho_2$.
\end{Lem}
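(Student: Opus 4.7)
The plan is to reduce the estimate to \cref{lem:commutator1} applied after exploiting commutator cancellation at the level of the Littlewood--Paley symbol. First, I would pass to the Fourier side. Since $\widehat{\triangle_j u}(\xi) = \phi_j(\xi)\hat{u}(\xi)$, Plancherel yields
\begin{align*}
\lb [\triangle_j, g]f, h\rb = \iint \bigl(\phi_j(\xi) - \phi_j(\eta)\bigr)\hat{g}(\xi-\eta)\hat{f}(\eta)\overline{\hat{h}(\xi)}\,d\eta\,d\xi,
\end{align*}
so that the commutator cancellation is concentrated entirely in the symbol difference. I would then bound this difference uniformly by the mean value theorem together with $\nabla\phi_j(\zeta) = 2^{-j}(\nabla\phi)(2^{-j}\zeta)$, which gives $|\phi_j(\xi) - \phi_j(\eta)| \le 2^{-j}\nrm{\nabla\phi}_{L^\infty}|\xi-\eta|$, effectively transferring the $|\xi-\eta|$ factor onto $\hat{g}$.

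To put the resulting integral into the form required by \cref{lem:commutator1}, I would perform two bookkeeping steps. First, I would define an auxiliary function $\tilde{g}$ by $|\hat{\tilde{g}}(\zeta)| = |\zeta||\hat{g}(\zeta)|$, so that a direct Plancherel computation gives $\Sob{\tilde{g}}{\Hdot^s} = \Sob{g}{\Hdot^{s+1}}$ for each $s\in\RR$. Second, I would exploit $\supp\hat{h}\subset\Acal_j$ to insert a factor of $|\xi|^{\rho_2}$ at the cost of $2^{-j\rho_2}$, since $|\xi|\sim 2^j$ throughout the integrand. These manipulations together yield
\begin{align*}
|\lb [\triangle_j, g]f, h\rb| \le C2^{-j(1+\rho_2)}\iint |\xi|^{\rho_2}|\hat{\tilde{g}}(\xi-\eta)||\hat{f}(\eta)||\hat{h}(\xi)|\,d\eta\,d\xi,
\end{align*}
and the right-hand side is precisely the modulus version of $\mathcal{L}_{\rho_2}(\tilde{g}, f, h)$.

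Finally, I would apply \cref{lem:commutator1} with $\sigma = \rho_2$ and $\epsilon = \rho_1$; the standing hypotheses $\rho_1\in(0,2)$, $\rho_2\in(-1,1)$, and $\rho_2 > \rho_1 - 1$ match the admissibility conditions of the lemma exactly. This produces a factor of $Cc_j\, 2^{j\rho_1}$ multiplied by the minimum of $\Sob{\tilde{g}}{\Hdot^{1-\rho_1}}\Sob{f}{\Hdot^{\rho_2}}$ and $\Sob{f}{\Hdot^{1-\rho_1}}\Sob{\tilde{g}}{\Hdot^{\rho_2}}$; converting $\tilde{g}$ back to $g$ via $\Sob{\tilde{g}}{\Hdot^s} = \Sob{g}{\Hdot^{s+1}}$ and combining the prefactors $2^{-j(1+\rho_2)}\cdot 2^{j\rho_1}$ produces the asserted $2^{j(\rho_1-\rho_2-1)}$ together with the two expected terms in the stated min. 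The proof is then essentially a matter of bookkeeping rather than analysis. The only point that requires any care beyond that---and which I expect to be the main technical step---is that \cref{lem:commutator1} is stated for Schwartz functions with signed Fourier transforms, whereas $\tilde{g}$ is specified only up to modulus and the hypotheses place $f,g,h$ merely in $L^2$. I expect this to be dispatched in the standard way: the proof of \cref{lem:commutator1} proceeds via moduli of Fourier transforms and is hence phase-insensitive, and $\Hdot^s$ approximation by Schwartz functions handles the regularity gap once the bound is already known to hold for a dense class.
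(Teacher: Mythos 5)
Your proposal is correct and follows essentially the same route as the paper's proof: Plancherel to expose the symbol difference $\phi_j(\xi)-\phi_j(\xi-\eta)$, the mean value theorem to bound it by $C2^{-j}|\eta|$ (transferring one derivative onto $g$, which the paper writes as $\Lambda g$ rather than your $\tilde g$), insertion of $|\xi|^{\rho_2}$ at cost $2^{-\rho_2 j}$ via $\supp\hat h\subset\Acal_j$, and then \cref{lem:commutator1} with $\si=\rho_2$, $\eps=\rho_1$. The modulus/density point you flag is handled exactly as you anticipate, since the proof of \cref{lem:commutator1} already proceeds through $|\mathcal{L}_\si(f,g,h)|\le\mathcal{L}_\si(|f|,|g|,|h|)$.
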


\begin{Rmk}
Note that the upper bound $Cc_{j}2^{(\rho_1-\rho_2-1)j}\Sob{ f}{\Hdot^{1-\rho_1}}\Sob{h}{L^2}\Sob{g}{\dot{H}^{1+\rho_2}}$ can also be established above by directly applying Proposition 2 in \cite{Miura2006}. Thus, the bound we provide allows for additional flexibility in the allocation of derivatives.
\end{Rmk}

\begin{Lem}\label{lem:commutator2}
Let $\be\in(1,2)$,  ${\rho_1\in (0,2)}$, ${\rho_2} \in (-1,1)$ be such that $\rho_2>\rho_{1}-1$. Let $f,g \in L^2$. Then there exists a constant $C>0$, depending only on $\be,{\rho_1,\rho_2}$, such that
    \begin{align}
        \Sob{[\Lam^{\be-2}\bdy_\ell,g]f}{\Hdot^{\rho_2-\rho_1}}\leq C\Sob{g}{\dot{H}^{\be-\rho_1}}\Sob{ f}{\Hdot^{\rho_2}},\quad \ell=1,2.\notag
    \end{align}
\end{Lem}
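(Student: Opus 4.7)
The plan is to estimate $\Sob{[\Lam^{\be-2}\bdy_\ell, g]f}{\Hdot^{\rho_2-\rho_1}}$ by Littlewood--Paley decomposition and duality. The norm equals, up to constants, $\left(\sum_j 2^{2j(\rho_2-\rho_1)}\Sob{\lpj[\Lam^{\be-2}\bdy_\ell, g]f}{L^2}^2\right)^{1/2}$, and each $L^2$-piece is controlled by dualizing against $h_j\in L^2$ with $\supp\hat h_j\subset\Acal_j$. In Fourier variables the resulting pairing reads
\begin{equation*}
\lb[\Lam^{\be-2}\bdy_\ell, g]f, h_j\rb = \iint m(\xi,\eta)\hat g(\xi-\eta)\hat f(\eta)\overline{\hat h_j(\xi)}\,d\eta\,d\xi,\quad m(\xi,\eta):=i\xi_\ell|\xi|^{\be-2}-i\eta_\ell|\eta|^{\be-2},
\end{equation*}
so the analysis reduces to understanding the symbol $m$. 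The aim is to produce, for each $j$, a bound of the form $|\lb\cdot,\cdot\rb|\le Cd_j 2^{j(\rho_1-\rho_2)}\Sob{g}{\Hdot^{\be-\rho_1}}\Sob{f}{\Hdot^{\rho_2}}\Sob{h_j}{L^2}$ with $\{d_j\}\in\ell^2(\ZZ)$ of bounded norm; Cauchy--Schwarz in $j$ then closes the estimate.

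\textbf{Symbol analysis and paradifferential decomposition.} I would decompose $g=\sum_k\lpk g$ and $f=\sum_n \triangle_n f$ and consider the three standard paradifferential regimes: low-high ($k\le n-2$, forcing $n\sim j$), high-low ($n\le k-2$, forcing $k\sim j$), and high-high ($|k-n|\le 1$ with $k,n\ge j-2$). The key symbol observation is that $\zeta\mapsto\zeta_\ell|\zeta|^{\be-2}$ is smooth on $\RR^2\setminus\{0\}$ with gradient bounded by $C|\zeta|^{\be-2}$, so in the LH regime where $|\xi|\sim|\eta|\sim 2^j$ the mean value theorem yields $|m(\xi,\eta)|\le C|\xi-\eta|\,|\xi|^{\be-2}$. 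In the HL and HH regimes I use the brute-force bound $|m(\xi,\eta)|\le C(|\xi|^{\be-1}+|\eta|^{\be-1})$. Combining these symbol bounds with Young's inequality for Fourier convolutions (applied to $|\Ft\lpk g|*|\Ft\triangle_n f|$) together with the Plancherel-type estimate $\Sob{\Ft\lpk g}{L^1}\lesssim 2^k\Sob{\lpk g}{L^2}$, the LH and HL regimes produce geometric-series contributions whose ratios are $2^{-(2-\be+\rho_1)}$ and $2^{-(1-\rho_2)}$ respectively; these lie in $(0,1)$ thanks to the hypotheses $\rho_1\in(0,2)$ and $\rho_2<1$. Standard $\ell^2$-convolution estimates then deliver $\ell^2$-summable dyadic contributions in these two regimes; the LH estimate may alternatively be recast, via the mean-value bound, as a trilinear form handled by \cref{lem:commutator1}.

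\textbf{The main obstacle: high-high.} The delicate case is the HH regime, where both source frequencies are at $\sim 2^k$ while the test frequency $\sim 2^j\le 2^k$, and no mean-value cancellation is available. A direct calculation using $|m|\le C 2^{k(\be-1)}$, Bernstein's inequality, and Cauchy--Schwarz in Fourier gives
\begin{equation*}
\bigl|\lb[\Lam^{\be-2}\bdy_\ell, \lpk g]\triangle_k f, h_j\rb\bigr|\le C c_k c'_k\,2^j 2^{k(\rho_1-\rho_2-1)}\Sob{g}{\Hdot^{\be-\rho_1}}\Sob{f}{\Hdot^{\rho_2}}\Sob{h_j}{L^2}
\end{equation*}
for each $k\ge j-2$, with $\{c_k\},\{c'_k\}\in\ell^2(\ZZ)$ the unit-norm sequences arising from $\Sob{\lpk g}{L^2}\le c_k 2^{-k(\be-\rho_1)}\Sob{g}{\Hdot^{\be-\rho_1}}$ and the analogous allocation for $f$. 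Summing on $k\ge j-2$ and weighting by $2^{j(\rho_2-\rho_1)}$ produces a discrete convolution with kernel $K_m:=2^{-m(\rho_2-\rho_1+1)}\mathbf{1}_{m\ge -2}$, which lies in $\ell^1(\ZZ)$ \emph{precisely} when $\rho_2>\rho_1-1$. Young's inequality then delivers the required $\ell^2$ sequence $\{d_j\}$ of bounded norm, closing the HH estimate. The hypothesis $\rho_2>\rho_1-1$ is thus seen to be essential, tied exclusively to the summability of this resonant high-high interaction.
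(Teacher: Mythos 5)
Your proof is correct, and it reaches the stated bound by a route that is structurally parallel to the paper's but differs in the key technical step. The paper does not run the Bony trichotomy on the commutator directly; it first proves a \emph{global} pointwise symbol bound $|m(\xi,\eta)|\le C|\eta|^{\be-1}$ (all $\be-1$ derivatives landing on the $g$-frequency), valid for every $(\xi,\eta)$, by writing $m=\int_0^1\tfrac{d}{d\tau}\left(|\mathbf{A}(\tau)|^{\be-2}\mathbf{A}(\tau)_\ell\right)d\tau$ along $\mathbf{A}(\tau)=\tau\xi+(1-\tau)(\xi-\eta)$ and then carefully estimating $\int_0^1|\mathbf{A}(\tau)|^{\be-2}\,d\tau$ even when the segment passes near the origin — the reduction to $\int_0^1||\varphi|-\tau|^{\be-2}d\tau\le C$ is where $\be>1$ is consumed. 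With that bound in hand the commutator becomes a product-type trilinear form, and the paper simply quotes \cref{lem:commutator1} with $\si=\rho_2$, $\eps=\rho_1$; the hypothesis $\rho_2>\rho_1-1$ is precisely that lemma's condition $\si>\eps-1$, which in its appendix proof is spent on the high-high paraproduct piece $L_3$ — the same resonant interaction you isolate, so the two arguments agree on where the constraint originates. What your version buys is self-containedness and the avoidance of the degenerate mean-value computation: you invoke the mean value theorem only in the low-high regime, where the segment stays in a fixed annulus, and use crude triangle-inequality symbol bounds elsewhere. What the paper's version buys is reusability: the uniform symbol estimate lets it cite \cref{lem:commutator1} verbatim (including the $\min$ over the two derivative allocations, needed elsewhere in the paper) instead of re-deriving the paraproduct machinery. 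One cosmetic caution: your low-high bound $|m|\le C|\xi-\eta|\,|\xi|^{\be-2}$ needs the segment from $\eta$ to $\xi$ to remain comparable to $2^j$, which is safe for $k\le n-3$ (or with the usual $S_{k-1}\,\triangle_n$ convention) but borderline as written at $k=n-2$.
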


\begin{proof}
Let $h \in L^{2}$. It will be convenient to define the following functional:
    \begin{align}\label{def:L:functional:1}
        \mathcal{L}_{\be,\ell}(f,g,h):=\iint m_{\be,\ell}(\xi,\eta)\hat{f}(\xi-\eta)\hat{g}(\eta)\overline{\hat{h}(\xi)}d\eta d\xi,
    \end{align}
where
    \begin{align*}
        m_{\be,\ell}(\xi,\eta):=|\xi|^{\be-2}\xi_\ell-|\xi-\eta|^{\be-2}(\xi-\eta)_\ell.
    \end{align*}
Indeed, by the Plancherel theorem, observe that
    \[
        \lb\Lam^{\rho_2-\rho_1}\lpj[\Lam^{\be-2}\bdy_\ell,g]f ,h\rb=\mathcal{L}_{\beta,\ell}(f,g, \Lam^{\rho_2-\rho_1}\lpj h).
    \]

Let
    \begin{align}\label{def:A}
    \mathbf{A}(\tau,\xi,\eta):=\tau\xi+(1-\tau)(\xe).
    \end{align}
For convenience, we will suppress the dependence of $\mathbf{A}$ on $\xi,\eta$. Observe that
    \begin{align}\label{E:meanvalue1}
    \begin{split}
        |m_{\be,\ell}(\xi,\eta)|&=\abs{\int_{0}^{1}\frac{d}{{d\tau}}\left(\abs{\mathbf{A}(\tau)}^{\be-2}\mathbf{A}(\tau)_{\ell}\right)d\tau}\\
        &=\abs{\int_{0}^{1}\left(\abs{\mathbf{A}(\tau)}^{\be-2}\eta_{\ell}+(\be-2)\abs{\mathbf{A}(\tau)}^{\be-4}(\mathbf{A}(\tau)\cdot \eta)\mathbf{A}(\tau)_{\ell}\right)d\tau}\\
    &\le C\abs{\eta}\int_{0}^{1}\abs{\mathbf{A}(\tau)}^{\be-2}d\tau,
    \end{split}
    \end{align}
where the fact $\be\in(1,2)$ is invoked to obtain the last inequality. 
Let $\varphi:=\frac{\xe}{\Ae}$ and $\vartheta:=\frac{\eta}{\Ae}$.  We observe that for fixed $\xi$ and $\eta$, we have
\begin{align*}
\int_{0}^{1}\abs{\eta}\abs{ \mathbf{A}(\tau)}^{\be-2}\,d\tau=\Ae^{\be-1}\int_{0}^{1}\frac{1}{\abs{\varphi+\tau \vartheta}^{2-\be}}\,d\tau.
\end{align*}
We have the following inequality:
\begin{align*}
\abs{\varphi+\tau \vartheta}^{2}=\abs{\varphi}^{2}+\tau^{2}+2\tau\varphi\cdot\vartheta\ge\abs{\varphi}^{2}+\tau^{2}-2\tau\abs{\varphi}=\abs{\abs{\varphi}-\tau}^{2},
\end{align*}
giving us, since $\be<2$,
\begin{align*}
\int_{0}^{1}\frac{1}{\abs{\varphi+\tau \vartheta}^{2-\be}}\,d\tau \le\int_{0}^{1}\frac{1}{\abs{\abs{\varphi}-\tau}^{2-\be}}d\tau.
\end{align*}
If $\abs{\varphi}\le 1$, we have for $1<\be<2$
\begin{align*}
\begin{split}
\int_{0}^{1}\frac{1}{\abs{\abs{\varphi}-\tau}^{2-\be}}\,d\tau&=\int_{0}^{\abs{\varphi}}\frac{1}{(\abs{\varphi}-\tau)^{2-\be}}\,d\tau+\int_{\abs{\varphi}}^{1}\frac{1}{(\tau-\abs{\varphi})^{2-\be}}\,d\tau\\
&\le C\left(\abs{\varphi}^{\be-1}+(1-\abs{\varphi})^{\be-1}\right)\le C.
\end{split}
\end{align*}
If $\abs{\varphi}>1$, then by the mean value theorem, since $1<\be<2$,
\begin{align*}
\int_{0}^{1}\frac{1}{\abs{\abs{\varphi}-\tau}^{2-\be}}\,d\tau&=C(-(\abs{\varphi}-1)^{\be-1}+\abs{\varphi}^{\be-1})
\le C.
\end{align*}
Using the above inequalities in (\ref{def:L:functional:1}), we get
\begin{align*}
 \abs{\mathcal{L}_{\be,\ell}(f,g,\lpj\Lam^{\rho_2-\rho_1} h)} \le &C\int_{\RR^{2}}\int_{\RR^{2}}\abs{\xi}^{\rho_2-\rho_1}|\widehat{\Lam^{\be-1} g}(\eta)||\hat{f}(\xi-\eta)||\widehat{\lpj h}(\xi)|d\eta d\xi.
\end{align*}
Application of \cref{lem:commutator1} with $\si=\rho_2$, $\epsilon=\rho_1$ gives us
\begin{align}\notag
\abs{\lb  \Lam^{\rho_2-\rho_1}\lpj[\Lam^{\be-2}\bdy_\ell,g]f ,h\rb}=\abs{\mathcal{L}_{\be,\ell}(f,g,\lpj\Lam^{\rho_2-\rho_1} h)} \le Cc_{j}\Sob{g}{\dot{H}^{\be-\rho_1}}\Sob{ f}{\Hdot^{\rho_2}}\Sob{ h}{L^2},
\end{align}
where $c_{j}$ is independent of $h$ and $\sum_{j}c_{j}^{2}\le 1$. Since $L^{2}= \dot{B}^{0}_{2,2}$, this completes the proof.  
\end{proof}

\subsection{Commutator estimates in Gevrey classes}\label{sect:commutator:Gev}

Next we prove a commutator estimate for operators which can be expressed as a product of Fourier multiplier operators given by ${G}^{\lam}_{\al}, \Lam^{\si}, \bdy_{\ell}, \triangle_j$, where ${G}^{\lam}_{\al}$ is defined in \eqref{def:alt:Gev:op}. In what follows, it will be convenient to introduce the operator, ${E}_\al^\lam$, given by
    \begin{align}\label{def:gev:avg}
    {\Ft({E}^\lam_\al f)(\xi):=\left(\int_0^1e^{\lam\tau^\al|\xi|^\al}d \tau\right)\hat{f}(\xi)}.
    \end{align}
We will also let ${D}$ denote  
    \begin{align*}
        D=\Lam\ \text{or}\ \bdy_\ell,\quad \text{for}\ \ell=1,2.
    \end{align*}

\begin{Lem}\label{lem:commutator3}
Let $\lam\geq0$, $\si \in [0,1)$, $\al\in(0,1]$, $\ze\in[0,1)$, $\nu\in(0,1)$, and $\rho \in \mathbb{R}$. Suppose $f,g,h\in L^2$ such that $\supp\hat{h}\subset{\Acal}_j$.  Then there exists a sequence $\{c_j\}\in\ell^2(\ZZ)$ such that $\Sob{\{c_j\}}{\ell^2}\leq1$ and
    \begin{align}
        |\lb [{G}_\al^{\lam}\Lam^{\si+\rho}{D}\lpj,g]f,h  \rb|
        \leq & Cc_j2^{{\nu} j}\min\left\{\Sob{ f}{\Gdot^\lam_{\al,1-\nu}}\Sob{  g}{\Gdot^\lam_{\al,\si+1}},\Sob{ g}{\Gdot^\lam_{\al,2-\nu}}\Sob{ f}{\Gdot^\lam_{\al,\si}}\right\}\Sob{\Lam^{\rho}h}{L^2}\notag\\
      &+ C\lam2^{(\si+1+\al-\ze)j}\Sob{{E}_\al^{\lam} S_{j-3}g}{\dot{H}^{1+\ze}}\Sob{{G}_\al^\lam\lpj f}{L^2}\Sob{\Lam^{\rho}h}{L^2},\notag
    \end{align}
for some constant $C>0$, depending only on $\si,\al,\ze,\nu,\rho$. 
\end{Lem}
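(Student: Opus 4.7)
The plan is to Fourier-transform the commutator pairing and split the resulting symbol into a classical piece and a purely Gevrey piece, each handled via a Bony paraproduct on $g$. By Plancherel,
\[
\langle [G_\al^\lam\Lam^{\si+\rho}D\lpj,g]f,h\rangle=\iint M(\xi,\eta)\hat g(\eta)\hat f(\xi-\eta)\overline{\hat h(\xi)}\,d\eta\,d\xi,
\]
where $M(\xi,\eta):=m(\xi)\phi_j(\xi)-m(\xi-\eta)\phi_j(\xi-\eta)$ with $m(\xi)=e^{\lam|\xi|^\al}|\xi|^{\si+\rho}\widehat{D}(\xi)$. The decomposition will be $M=M_1+M_2$, where $M_1:=\bigl(e^{\lam|\xi|^\al}-e^{\lam|\xi-\eta|^\al}\bigr)|\xi|^{\si+\rho}\widehat{D}(\xi)\phi_j(\xi)$ isolates the Gevrey difference and $M_2:=M-M_1$ retains the classical commutator symbol weighted by $e^{\lam|\xi-\eta|^\al}$.

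The $M_2$ contribution absorbs $e^{\lam|\xi-\eta|^\al}$ into $\hat f(\xi-\eta)$ to produce $\widehat{G_\al^\lam f}$, so the pairing reduces to a classical commutator of the form treated in \cref{lem:commutator2a}. Applying that lemma, together with the symmetric version available via \cref{cor:commutator1} and a Bernstein extraction $\Sob{h}{L^2}\sim 2^{-\rho j}\Sob{\Lam^\rho h}{L^2}$ (valid since $|\xi|\sim 2^j$ on $\supp\hat h$), delivers the $\min$-structure in the first term of the claimed bound. Each Sobolev norm of $f$ or $g$ appearing there is then dominated by the corresponding Gevrey norm, since $e^{\lam|\cdot|^\al}\geq 1$. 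The high-frequency piece of $g$ in $M_1$, arising from the Bony split $g=S_{j-3}g+(I-S_{j-3})g$, is absorbed into the same framework via the subadditivity $(a+b)^\al\leq a^\al+b^\al$ for $\al\in(0,1]$: this yields $e^{\lam|\xi|^\al}\leq e^{\lam|\eta|^\al}e^{\lam|\xi-\eta|^\al}$, distributing the Gevrey weight across $f$ and $g$ and reducing the piece to the classical commutator framework applied to $G_\al^\lam f$ and $G_\al^\lam g$.

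The genuinely new contribution is the low-high part of $M_1$, where $\widehat{S_{j-3}g}(\eta)$ is paired against $\hat f(\xi-\eta)\overline{\hat h(\xi)}$ with $|\eta|\lesssim 2^{j-3}$, $|\xi|\sim 2^j$, and therefore $|\xi-\eta|\sim 2^j$. Setting $\mathbf A(\tau):=\xi-\eta+\tau\eta$, the fundamental theorem of calculus gives
\[
e^{\lam|\xi|^\al}-e^{\lam|\xi-\eta|^\al}=\lam\al\int_0^1|\mathbf A(\tau)|^{\al-2}\bigl(\mathbf A(\tau)\cdot\eta\bigr)e^{\lam|\mathbf A(\tau)|^\al}\,d\tau,
\]
and the subadditivity $|\mathbf A(\tau)|^\al\leq|\xi-\eta|^\al+\tau^\al|\eta|^\al$ together with $|\mathbf A(\tau)|\sim 2^j$ yields
\[
|M_1(\xi,\eta)|\lesssim\lam 2^{(\si+\rho+\al)j}|\eta|\,\Ft(E_\al^\lam)(\eta)\,e^{\lam|\xi-\eta|^\al}
\]
on the relevant support. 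Young's convolution inequality then reduces matters to controlling $\bigl\||\cdot|\Ft(E_\al^\lam S_{j-3}g)\bigr\|_{L^1}$, which by Cauchy--Schwarz on $\Bcal_{j-3}$ with the integrable weight $|\eta|^{-2\ze}$ (requiring $\ze<1$) is at most $C2^{(1-\ze)j}\Sob{E_\al^\lam S_{j-3}g}{\Hdot^{1+\ze}}$; combining with Bernstein on $h$ produces the sharp factor $2^{(\si+1+\al-\ze)j}$ in the second term. The main obstacle is precisely this low-high piece: orchestrating the Gevrey mean-value identity against a Bernstein estimate on $\Bcal_{j-3}$ in the window $\ze\in[0,1)$ so as to produce that sharp exponent, while a secondary subtlety is verifying that the sequence $\{c_j\}$ in the first term retains its $\ell^2$-summability, which is inherited from the Littlewood--Paley orthogonality underlying \cref{cor:commutator1}.
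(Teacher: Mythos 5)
Your proposal is correct and follows essentially the same route as the paper's proof: both isolate the Gevrey difference $e^{\lam|\xi|^\al}-e^{\lam|\xi-\eta|^\al}$ from the classical commutator symbol, split that piece according to whether $\eta$ lies in $\Bcal_{j-3}$ or in $\Acal_{j-3,j+2}$, treat the ball piece via the mean-value identity, subadditivity of $|\cdot|^\al$ (which produces the averaged operator $E_\al^\lam$), Young's convolution inequality, and a Cauchy--Schwarz/Bernstein step on $\Bcal_{j-3}$ yielding the factor $2^{(1-\ze)j}\Sob{E_\al^\lam S_{j-3}g}{\Hdot^{1+\ze}}$, and feed the remaining pieces into \cref{cor:commutator1} after a mean-value bound on the classical symbol. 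The only cosmetic differences are that the paper performs a three-way triangle-inequality split of the symbol rather than your two-way split $M_1+M_2$ (so your second term ends up with a slightly fattened block on $f$, which is harmless), and that the classical piece must be controlled by the weighted product estimate of \cref{cor:commutator1} rather than by \cref{lem:commutator2a} alone, since the latter concerns $[\lpj,g]$ and would not by itself produce the stated $\min$-structure with prefactor $2^{\nu j}$.
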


\begin{proof}
As we will see below, the proof will make use of the fact that the symbol of $D$ is dominated by that of $\Lam$, and so in order to avoid redundancy in the argument, it will suffice to treat the case $D=\bdy_{\ell}$. 

First, let us define
    \begin{align*}
        \mathcal{L}^{\lam,\al,\si}_{j,\ell}(f,g,h):=\iint\limits_{\xi\in{\Acal}_j} m_{\al,\si,j,\ell}^{\lam}(\xi,\eta)\hat{f}(\xi-\eta)\hat{g}(\eta)\overline{\hat{h}(\xi)}\, d\eta \, d\xi,
    \end{align*}
where
    \begin{align*}
        m_{\al,\si,j,\ell}^{\lam}(\xi,\eta):=e^{\lam|\xi|^\al} \phi_{j}(\xi)\abs{\xi}^{\si}\xi_{\ell}-e^{\lam|\xi-\eta|^\al}\phi_{j}(\xe)\abs{\xi-\eta}^{\si}(\xi-\eta)_{\ell}.
    \end{align*}
Then, using Plancherel's theorem, we see that
    \begin{align}\label{def:L:comm:relation}
        \mathcal{L}^{\lam,\al,\si}_{j,\ell}(f,g,h)=\lb[{G}_\al^\lam\Lam^\si\bdy_\ell\lpj,g]f,h\rb.
    \end{align}
By \eqref{def:L:comm:relation}, it is therefore equivalent to obtain bounds for $\mathcal{L}_{j,\ell}^{\lam,\al,\si+\rho}$. For convenience, we will now suppress the indices on $m$.

Observe that from the triangle inequality, we have
    \begin{align}
         |m(\xi,\eta)|\le& \abs{\Ax^{\si}\xi_{\ell}-\Axe^{\si}(\xe)_{\ell}}e^{\lam|\xi|^\al}\abs{\xe}^{\rho}\phi_{j}(\xe)+ \abs{\abs{\xi}^{\rho}\phi_{j}(\xi)-\abs{\xe}^{\rho}\phi_{j}(\xe)}e^{\lam|\xi|^\al} \Ax^{\si+1}\notag\\& +\abs{e^{\lam|\xi|^\al}-e^{\lam|\xi-\eta|^\al}}\Axe^{\si+\rho+1}\phi_{j}(\xe)\notag\\
         =&m_1(\xi,\eta)+m_2(\xi,\eta)+m_3(\xi,\eta)\notag.
    \end{align}
Note that $m_j\geq0$, for each $j=1,2,3$.

Let $\mathbf{A}(\tau,\xi,\eta)$ be given as in \eqref{def:A}. Then we estimate $m_1$ as in \eqref{E:meanvalue1} and the triangle inequality, making use of the facts that $\si\in[0,1)$ and $\al\in(0,1]$. Since $\phi_{j}(\xe)=0$ whenever $\xe \notin {\Acal}_{j}$, we have
    \begin{align}\label{est:comm3:m1}
        m_1(\xi,\eta)
        &\leq C 2^{\rho j} e^{\lam|\xi-\eta|^\al}e^{\lam|\eta|^\al}\phi_{j}(\xe)|\eta|\int_0^1|\mathbf{A}(\tau)|^{\si}d\tau \notag\\
        &\leq C 2^{\rho j} e^{\lam|\xi-\eta|^\al}e^{\lam|\eta|^\al}\phi_{j}(\xe)|\eta|\left(\Axe^{\si}+\Ae^{\si}\right).
    \end{align}
Similarly, for $m_2$, we additionally use the fact that $\xi\in{\Acal}_j$ to estimate
    \begin{align}\label{est:comm3:m2}
        m_2(\xi,\eta)&\leq C2^{-j+\rho j}\left(\Sob{\phi_0}{L^\infty}+\Sob{{\nabla}\phi_0}{L^\infty}\right)|\eta||\xi|^{\si+1}e^{\lam|\xi|^\al}\notag\\
        &\leq C 2^{\rho j}e^{\lam|\xi-\eta|^\al}e^{\lam|\eta|^\al}|\eta|\left(|\xi-\eta|^{\si}+|\eta|^{\si}\right).
    \end{align}
Finally, for $m_3$, let us first observe that $\xi,\xi-\eta\in{\Acal}_j$ implies $\eta\in{\Bcal}_{j+2}$. It then follows from the fact $\xi\in{\Acal}_j$ that
    \begin{align*}
        m_3(\xi,\eta)&=m_3(\xi,\eta)\mathbbm{1}_{{\Acal}_j}(\xi)\mathbbm{1}_{{\Acal}_j}(\xi-\eta)\left(\mathbbm{1}_{{\Bcal}_{j-3}}(\eta)+\mathbbm{1}_{{\Acal}_{j-3,j+2}}(\eta)\right)\notag\\
        &=m_{3}(\xi,\eta)\mathbbm{1}_{{\Acal}_j}(\xi)\mathbbm{1}_{{\Acal}_j}(\xi-\eta)\mathbbm{1}_{{\Bcal}_{j-3}}(\eta)+m_{3}(\xi,\eta)\mathbbm{1}_{{\Acal}_j}(\xi)\mathbbm{1}_{{\Acal}_j}(\xi-\eta)\mathbbm{1}_{{\Acal}_{j-3,j+2}}(\eta)\notag\\
        &=m_3^{(1)}(\xi,\eta)+m_3^{(2)}(\xi,\eta).
    \end{align*}
For $m_3^{(1)}$, observe that for $\xi-\eta\in{\Acal}_j$ and $\eta\in{{\Bcal}_{j-3}}(\eta)$, we have
    \begin{align}\label{est:A:ball}
        |{\bf{A}(\tau,\xi,\eta)}|^{\al-1}\leq C2^{(\al-1)j},
    \end{align}
It follows from \eqref{est:A:ball} that
    \begin{align}
       \abs{e^{\lam|\xi|^\al}-e^{\lam|\xi-\eta|^\al}}&=\left| \int_0^1\frac{d}{d\tau}\left(e^{\lam{|\bf{\Acal}}(\tau)|^\al}\right)d\tau\right|\notag\\
       &\leq \al\lam|\eta| \int_0^1e^{\lam|{\bf{\Acal}(\tau)}|^\al}|{\bf{\Acal}(\tau)}|^{\al-1}d\tau\leq C\lam|\eta| 2^{(\al-1)j}e^{\lam|\xi-\eta|^\al}\int_0^1e^{\lam\tau^\al|\eta|^\al}d\tau. \notag
    \end{align}
Thus
    \begin{align}\label{est:m3:exp1}
        m_3^{(1)}(\xi,\eta)\leq C\lam|\eta| 2^{(\al-1+\rho)j}e^{\lam|\xi-\eta|^\al}        \left(\int_0^1e^{\lam\tau^\al|\eta|^\al}d\tau\right)|\xi-\eta|^{\si+1}\phi_j(\xi-\eta)\mathbbm{1}_{{\Acal}_j}(\xi)\mathbbm{1}_{{\Bcal}_{j-3}}(\eta).
    \end{align}
On the other hand, for $m_3^{(2)}$, we have
    \begin{align}\label{est:m3:exp2}
        m_3^{(2)}(\xi,\eta)&\leq e^{\lam|\xi-\eta|^\al}\left(e^{\lam|\eta|^\al}-1\right)|\xi-\eta|^{\si+1+\rho}\phi_j(\xi-\eta)\mathbbm{1}_{{\Acal}_j}(\xi)\mathbbm{1}_{{\Acal}_{j-3,j+2}}(\eta)\notag\\
        &\leq C2^{\rho j}e^{\lam|\xi-\eta|^\al}e^{\lam|\eta|^\al}|\eta||\xi-\eta|^{\si}\phi_j(\xi-\eta)\mathbbm{1}_{{\Acal}_j}(\xi)\mathbbm{1}_{{\Acal}_{j-3,j+2}}(\eta).
    \end{align}
Hence, upon combining \eqref{est:m3:exp1} and \eqref{est:m3:exp2}, we have
    \begin{align}\label{est:comm3:m3}
        m_3(\xi,\eta)\leq& C2^{\rho j}\lam|\eta| 2^{(\al-1)j}e^{\lam|\xi-\eta|^\al}        \left(\int_0^1e^{\lam\tau^\al|\eta|^\al}d\tau\right)|\xi-\eta|^{\si+1}\phi_j(\xi-\eta)\mathbbm{1}_{{\Acal}_j}(\xi)\mathbbm{1}_{{\Bcal}_{j-3}}(\eta)\notag\\
        &+C2^{\rho j}e^{\lam|\xi-\eta|^\al}e^{\lam|\eta|^\al}|\eta||\xi-\eta|^{\si}\phi_j(\xi-\eta)\mathbbm{1}_{{\Acal}_j}(\xi)\mathbbm{1}_{{\Acal}_{j-3,j+2}}(\eta).
    \end{align}

Upon returning to \eqref{def:L:comm:relation}, and applying \eqref{est:comm3:m1}, \eqref{est:comm3:m2}, and \eqref{est:comm3:m3}, then using the notation in \eqref{def:gev:avg}, we obtain
    \begin{align}
        |\mathcal{L}^{\lam,\al,\si+\rho}_{j,\ell}(f,g,h)|
        \leq& C\iint\limits_{\xi\in{\Acal}_j}\left(|\xi-\eta|^{\si}+|\eta|^{\si}\right)|\mathcal{F}({G}_\al^\lam f)(\xi-\eta)||\mathcal{F}({G}_\al^\lam \Lam g)(\eta)|\abs{\xi}^{\rho}|\hat{h}(\xi)|d\eta d\xi\notag\\
        &+C\lam2^{(\al-1)j}\iint\limits_{\xi\in{\Acal}_j}|\mathcal{F}({G}_\al^\lam\Lam^{\si+1}\lpj f)(\xi-\eta)||\mathcal{F}({E}_\al^{\lam}\Lam S_{j-3}g)(\eta)|\abs{\xi}^{\rho}|\hat{h}(\xi)|d\eta d\xi\notag\\
        \leq& L_1+L_2.\notag
    \end{align}
For $L_1$ we may use \cref{cor:commutator1}
with $s=\si$ and $\eps={\nu}$ to obtain 
    \begin{align}\label{est:L1:comm3:a}
        L_1\leq 
        C_j2^{{\nu} j}\min\left\{\Sob{{G}_\al^\lam f}{\dot{H}^{1-{\nu}}}\Sob{{G}_\al^\lam  g}{\dot{H}^{\si+1}},\Sob{{G}_\al^\lam g}{\dot{H}^{2-{\nu}}}\Sob{{G}_\al^\lam f}{\dot{H}^{\si}}\right\}\Sob{\Lam^{\rho}h}{L^2},
    \end{align}
for $\si \in [0,1)$, $\nu \in (0,1)$, for some $\{C_j\}_j\in\ell^2(\ZZ)$.

For $L_2$, we use the Cauchy-Schwarz inequality,  Young's convolution inequality, Plancherel's theorem, and Bernstein's inequality, to obtain for any $\ze\in[0,1)$
    \begin{align}\label{est:L2:comm3}
        L_2&\leq C\lam 2^{(\al-1)j}\Sob{{G}_\al^\lam\Lam^{\si+1}\lpj f}{L^2}\Sob{\Ft({E}_\al^{\lam}\Lam S_{j-3}g)}{L^1}\Sob{\Lam^{\rho}h}{L^2}\notag\\
        &\leq C\lam2^{(\al-\ze)j}\Sob{{G}_\al^\lam\Lam^{\si+1}\lpj f}{L^2}\Sob{{E}_\al^{\lam}\Lam^{1+\ze} S_{j-3}g}{L^2}\Sob{\Lam^{\rho}h}{L^2}\notag\\
        &\leq  C\lam2^{(\si+1+\al-\ze)j}\Sob{{G}_\al^\lam\lpj f}{L^2}\Sob{{E}_\al^{\lam}\Lam^{1+\ze} S_{j-3}g}{L^2}\Sob{\Lam^{\rho}h}{L^2}.
    \end{align}
Upon adding \eqref{est:L1:comm3:a}
and \eqref{est:L2:comm3} we obtain the desired result.
\end{proof}

\subsection{Commutator estimates with logarithmic multipliers}

\begin{Lem}\label{lem:commutator4}
Let $\mu,\rho >0$, $\eps\in(0,1)$, and $\de\in(0,2\mu)$. Suppose $f,g,h\in L^2$.
Then there exists a constant $C>0$, depending only on $\mu,{\epsilon}, \de$, such that
    \begin{align}
       | \langle[\ld^{\mu}\partial_{\ell},g]f,h  \rangle|\leq C\Sob{g}{\dot{H}^{2-\eps+\rho}}^{\frac{1}{1+\rho}}\Sob{g}{\Hdot^{1-\eps}}^{\frac{\rho}{1+\rho}}\left(\Sob{f}{\Hdot^{{\eps+\de}}}\Sob{h}{L^2}+\Sob{f}{L^2}\Sob{h}{\Hdot^{{\eps+\de}}}\right),\quad \text{for}\ \ell=1,2.\notag
    \end{align}
\end{Lem}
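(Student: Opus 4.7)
The plan is to mirror the proof of Lemma~\ref{lem:commutator2}: pass to Fourier variables, apply the mean value theorem to the commutator symbol, and reduce matters to a trilinear functional in the spirit of Lemma~\ref{lem:commutator1}. The symbol of $[\ldmu\bdy_\ell, g]$ is
\[
m(\xi,\eta) = \lnx^\mu \xi_\ell - \lnxe^\mu(\xi-\eta)_\ell,
\]
and, writing $\mathbf{A}(\tau):=(\xi-\eta)+\tau\eta$, the fundamental theorem of calculus applied to $\tau\mapsto (\ln(1+|\mathbf{A}(\tau)|^2))^\mu\mathbf{A}(\tau)_\ell$ yields
\[
|m(\xi,\eta)| \lesssim |\eta|\int_0^1 \Bigl((\ln(1+|\mathbf{A}(\tau)|^2))^\mu + \mu(\ln(1+|\mathbf{A}(\tau)|^2))^{\mu-1}\Bigr)\,d\tau.
\]
Using the elementary inequality $1+|\mathbf{A}(\tau)|^2 \le (1+|\xi-\eta|^2)(1+|\eta|^2)$ together with sub-/super-additivity of $t\mapsto t^\mu$, this reduces to
\[
|m(\xi,\eta)| \lesssim |\eta|\bigl(1 + \lnxe^\mu + \lne^\mu\bigr),
\]
so the trilinear form splits into three contributions, distinguished by whether the log factor accompanies $\hat f$, accompanies $\hat g$, or is absent.

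Since $\ldmu$ is a real Fourier multiplier and $\bdy_\ell^*=-\bdy_\ell$, the operator $[\ldmu\bdy_\ell, g]$ is formally self-adjoint (for real $g$), giving $\lb[\ldmu\bdy_\ell,g]f,h\rb = \lb f,[\ldmu\bdy_\ell,g]h\rb$. It therefore suffices to prove the bound with the single term $\Sob{f}{\Hdot^{\eps+\de}}\Sob{h}{L^2}$ on the right-hand side; the conjugate term $\Sob{f}{L^2}\Sob{h}{\Hdot^{\eps+\de}}$ follows by swapping $f\leftrightarrow h$. For each of the three pieces of the symbol majorant, one absorbs the log factor into the relevant variable via $\lnxe^\mu\hat f(\xi-\eta) = \mathcal{F}(\ldmu f)(\xi-\eta)$ or $\lne^\mu\hat g(\eta) = \mathcal{F}(\ldmu g)(\eta)$, and applies Young's convolution inequality on the Fourier side (as in the proof of Lemma~\ref{lem:commutator1}) to reduce matters to an $L^1_\eta$--norm of the form $\int|\eta|\lne^\mu|\hat g(\eta)|\,d\eta$ paired with $L^2$ Fourier norms of $f$ and $h$.

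The core subproblem is to bound $\int|\eta|\lne^\mu|\hat g(\eta)|\,d\eta$ by a suitable combination of homogeneous Sobolev norms of $g$. A single Cauchy--Schwarz with weight $|\eta|^{2s}$ fails because the auxiliary integral $\int|\eta|^{2-2s}\lne^{2\mu}d\eta$ requires $s<2$ near the origin and $s>2$ at infinity. The remedy is to split $\eta$ at $|\eta|=1$: on $\{|\eta|\le 1\}$ the log is bounded and Cauchy--Schwarz with weight $|\eta|^{2(1-\eps)}$ yields control by $\Sob{g}{\Hdot^{1-\eps}}$; on $\{|\eta|>1\}$ the bound $\lne^\mu \lesssim_{\delta'} |\eta|^{\delta'}$ (valid for arbitrary $\delta'>0$) converts the log into an arbitrarily small power, and Cauchy--Schwarz with weight $|\eta|^{2(2-\eps+\rho)}$ controls that piece by $\Sob{g}{\Hdot^{2-\eps+\rho}}$ provided $\delta'<\rho-\eps$. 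Optimizing the cutoff radius between the two estimates---equivalently, combining them via the interpolation identity $\Sob{g}{\Hdot^{2-\eps}}\le C\Sob{g}{\Hdot^{2-\eps+\rho}}^{1/(1+\rho)}\Sob{g}{\Hdot^{1-\eps}}^{\rho/(1+\rho)}$---produces the product form in the claim. The constraint $\de\in(0,2\mu)$ supplies analogous slack on the $f$-side, allowing the log factor landing on $\hat f$ to be absorbed into an $\Hdot^{\eps+\de}$-norm by a parallel low/high-frequency decomposition.

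The principal obstacle will be the extraction of the product form on the $g$-norm with the precise exponents $(\tfrac{1}{1+\rho},\tfrac{\rho}{1+\rho})$ rather than the cruder sum of the two endpoint Sobolev norms that the two-scale Cauchy--Schwarz naturally outputs. The logarithmic factor is nonhomogeneous and cannot be dominated by a single power of $|\eta|$ uniformly on $\RR^2$, so the low/high decomposition is essential, and it is only after balancing the two scales through the interpolation identity above that the claimed geometric-mean form emerges; keeping the $f$- and $h$-norms homogeneous (rather than inhomogeneous) imposes the same dichotomy on those variables and is what pins down the constraint $\de\in(0,2\mu)$.
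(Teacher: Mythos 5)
There is a genuine gap. Your overall template (fundamental theorem of calculus on the symbol, then a trilinear Fourier-side estimate) matches the paper's, and the self-adjointness observation is a legitimate substitute for the paper's way of producing the two symmetric terms. But your majorization $|m(\xi,\eta)|\lesssim\Ae\,(1+\lnxe^{\mu}+\lne^{\mu})$ discards the one piece of structure the proof actually turns on. The paper's estimate is
\begin{align*}
|m_{\mu,\ell}(\xi,\eta)|\le C\Ae\left(\ln\left(1+\max\{\Ax^{2},\Axe^{2}\}\right)\right)^{\mu}\le C\Ae^{1-\eps}\max\left\{\Axe^{\eps}\lnxe^{\mu},\;\Ax^{\eps}\lnx^{\mu}\right\},
\end{align*}
which strips an $\eps$ power off $\Ae$ (using $\Ae\le C\max\{\Ax,\Axe\}$) and deposits it, together with the logarithm, on whichever of $\xi$, $\xi-\eta$ is larger. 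This is what (i) leaves only $\Ae^{1-\eps}$ on $\hat g$, so that \eqref{E:interpolation:L1} with $\si=1-\eps$, $\si_1=0$, $\si_2=1+\rho$ yields exactly $\Sob{g}{\dot{H}^{2-\eps+\rho}}^{\frac{1}{1+\rho}}\Sob{g}{\Hdot^{1-\eps}}^{\frac{\rho}{1+\rho}}$ for \emph{every} $\rho>0$, and (ii) supplies the $\eps$ in the $\Hdot^{\eps+\de}$ norms of $f$ and $h$ (the logarithm alone only yields the $\de<2\mu$ via \eqref{E:elementary:log}; $\eps+\de\le2\mu$ is not assumed, so there is no other source for those $\eps$ derivatives).

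Your reduction keeps the full weight $\Ae$ (and, in one piece, the log as well) on $\hat g$, and this cannot produce the stated bound. The inequality $\int\Ae|\hat g(\eta)|\,d\eta\le C\Sob{g}{\dot{H}^{2-\eps+\rho}}^{\frac{1}{1+\rho}}\Sob{g}{\Hdot^{1-\eps}}^{\frac{\rho}{1+\rho}}$, which your no-log and log-on-$f$ pieces would require, is false by scaling: under $g\mapsto g(\lam\,\cdot)$ the left side scales as $\lam$ while the right side scales as $\lam^{1-\eps}$. Your two-scale Cauchy--Schwarz makes the failure visible in another way: the high-frequency piece needs $\de'<\rho-\eps$, which is vacuous whenever $\rho\le\eps$, and both are arbitrary positive parameters in the lemma. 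The interpolation identity you invoke at the end cannot repair the mismatch, since interpolating $\int\Ae\lne^{\mu}|\hat g|$ produces exponents on the two $g$-norms different from $\frac{1}{1+\rho}$, $\frac{\rho}{1+\rho}$. A secondary slip: dropping the factor $|\mathbf{A}(\tau)|^{2}/(1+|\mathbf{A}(\tau)|^{2})$ from the $(\ln(1+|\mathbf{A}(\tau)|^{2}))^{\mu-1}$ term leaves a singular integrand when $\mu<1$; the paper keeps it and uses $x/(1+x)\le\ln(1+x)$ to restore the exponent $\mu$, which also removes your spurious log-free contribution. The fix is to perform the $\Ae^{1-\eps}\cdot\max\{\Axe^{\eps},\Ax^{\eps}\}$ redistribution before applying Cauchy--Schwarz and Young's convolution inequality.
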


\begin{proof}
We consider the following functional:
    \begin{align}\label{def:L:functional:log}
        \mathcal{L}_{\mu,\ell}(f,g,h):=\iint m_{\mu,\ell}(\xi,\eta)\hat{f}(\xi-\eta)\hat{g}(\eta)\overline{\hat{h}(\xi)}d\eta d\xi,
    \end{align}
where
    \begin{align}\notag
        m_{\mu,\ell}(\xi,\eta):=\lnx^{\mu}\xi_\ell-\lnxe^{\mu}(\xi-\eta)_\ell.
    \end{align}
By the Plancherel theorem, observe that
    \[
        \lb [\ldmu \bdy_\ell,g]f ,h\rb=\mathcal{L}_{\mu,\ell}(f,g,h).
    \]
As before we set
    \begin{align}\notag
    \mathbf{A}(\tau,\xi,\eta):=\tau\xi+(1-\tau)(\xe).
    \end{align}
For convenience, we will suppress the dependence of $\mathbf{A}$ on $\xi,\eta$. Observe that by the elementary inequality $\frac{x}{(1+x)}\le \ln(1+x)$, we have
    \begin{align*}
    \begin{split}
        |m_{\mu,\ell}(\xi,\eta)|&=\abs{\int_{0}^{1}\frac{d}{{d\tau}}\left(\left( \ln \left(1+\abs{\mathbf{A}(\tau)}^{2}\right)\right)^{\mu}\mathbf{A}(\tau)_{\ell}\right)d\tau}\\
        &\le C\Ae\int_{0}^{1}\left(\ln\left(1+\abs{\mathbf{A}(\tau)}^{2}\right)\right)^{\mu-1}\left(\frac{\abs{\mathbf{A}(\tau)}^{2}}{1+\abs{\mathbf{A}(\tau)}^{2}}\right)+\left(\ln\left(1+\abs{\mathbf{A}(\tau)}^{2}\right)\right)^{\mu}\,d\tau\\
	&\le C\Ae\left(\ln\left(1+\text{max}\{\,\Ax^{2},\Axe^{2}\}\right)\right)^{\mu} \\
	&\le C\Ae^{1-{\eps}}\max\left\{\Axe^{{\eps}}\lnxe^{\mu},\,\Ax^{{\eps}}\lnx^{\mu}\right\}.
    \end{split}
    \end{align*}
Using the above inequality in (\ref{def:L:functional:log}), we have 
\begin{align*}
        |\mathcal{L}_{\mu,\ell}(f,g,h)|\le& \iint \Ae^{1-{\eps}} |\hat{g}(\eta)|\lnxe^{\mu}\Axe^{{\eps}}|\hat{f}(\xi-\eta)|\hat{h}(\xi)|d\eta d\xi \\
        &+C\iint \Ae^{1-{\eps}} |\hat{g}(\eta)||\hat{f}(\xi-\eta)\lnx^{\mu}\Ax^{{\eps}}|\hat{h}(\xi)|d\eta d\xi\\
        =&I+II.
    \end{align*}
We make use of the following elementary inequality: for any $\al\in(0,1)$
\begin{align}\label{E:elementary:log}
    \log(1+x)\leq C_\al x^\al.
\end{align}
In particular, we have $((\log(1+|\eta|^2))^\mu\leq C_{\mu,\de}|\eta|^{\de}$, whenever $\de\in(0,2\mu)$. We apply the Cauchy-Schwarz inequality,  Young's convolution inequality, \eqref{E:interpolation:L1} with $\si=1-\eps$, $\si_1=0$, $\si_2=1+\rho$, and Plancherel's theorem, to obtain
\begin{align*}
        |I| &\le C
            \Sob{{|\eta|^{1-{\eps}}\hat{g}}(\eta)}{L^1}\Sob{\Ae^{{\eps+\de}}\hat{f}(\eta)}{L^2}\Sob{\hat{h}(\eta)}{L^2}\\
            &\le C\Sob{g}{\dot{H}^{2-\eps+\rho}}^{\frac{1}{1+\rho}}\Sob{g}{\Hdot^{1-\eps}}^{\frac{\rho}{1+\rho}}\Sob{f}{\Hdot^{{\eps+\de}}}\Sob{h}{L^2}.
    \end{align*}
Similarly, we can show
\begin{align*}
     |II|\le C\Sob{g}{\dot{H}^{2-\eps+\rho}}^{\frac{1}{1+\rho}}\Sob{g}{\Hdot^{1-\eps}}^{\frac{\rho}{1+\rho}}\Sob{f}{L^2}\Sob{h}{\Hdot^{{\eps+\de}}},
\end{align*}
thus completing the proof.
\end{proof} 

\begin{Rmk}\label{remark:log}
Note that by using Plancherel's theorem and the inequality (\ref{E:elementary:log}), we can also deduce that for any $\eps, \mu,\de >0$ satisfying $\de \in (0,2\mu)$
\begin{align}\label{E:log:inequality}
    \Sob{\ldmu f}{\Hdot^{\eps}}\le C_{\mu, \eps, \de}\Sob{f}{\Hdot^{{\eps+\de}}}.
\end{align}
\end{Rmk}

\section{Dissipative perturbation of a linear conservation law with modified flux}\label{sect:mod:flux}

The proof of our first main result, \cref{thm:main:beta}, will rely on an approximating sequence that is determined by a linear scalar conservation law that is dissipatively perturbed by the appropriate power of the fractional laplacian. To be specific, given $q$ sufficiently smooth, we will consider the following initial value problem:
\begin{align}\label{E:mod:claw}
    {\partial_{t}\theta+  \Div F_q(\tht)=-{\gam} \Lam^{\kap}\tht},\quad\tht(0,x)=\tht_0(x).
\end{align}
where 
\begin{align}\label{def:mod:flux}
F_q(\tht)=\begin{cases} 
 (\nabla^{\perp}\Lam^{\be-2}q) \theta &\text{if $\be<1+\kap$}\\
 (\nabla^{\perp}\Lam^{\be-2}q) \theta+\Lam^{\be-2}(({\nabla}^\perp\tht)q)\quad &\text{if $\be\geq 1+\kap$}.
\end{cases}
\end{align}
Note that one formally has $\Div F_{-\tht}(\tht)=-({\nabla}^\perp\Lam^{\be-2}\tht)\cdotp{\nabla}\tht=u\cdotp{\nabla}\tht$. Hence, one recovers equation (\ref{E:dissipative-beta}) in the case $q=-\tht$.
The purpose of this particular modification to the flux is to accommodate additional commutators in the study of \eqref{E:dissipative-beta} that the ``standard" approximating sequence of linear transport equations cannot handle. We observe that when $\be<1+\kap$, that is, $\si_c<2$, where $\sigma_c=\be+1-\kap$ denotes the critical Sobolev exponent, no modification is required and one may simply use the standard approximating sequence by a linear transport equation, as indicated by \eqref{def:mod:flux}. However, a modification is crucial for treating the regime corresponding to $\be\geq 1+\kap$, i.e., $\si_c\geq2$, of the more singular velocities in \eqref{E:dissipative-beta}. In this regard, the proposed equation \eqref{E:mod:claw}  faithfully respects the more nuanced commutator structure of the generalized SQG equations required to treat the more singular regime of $\beta\geq1+\kap$. We will first establish existence and uniqueness of solutions to \eqref{E:mod:claw}
\begin{Thm}{\label{T:transport}}
Let $\be\in(1,2)$, $\kap\in(0,1)$, and $\si_c=1+\be-\kap$. Given $T>0$, suppose $q\in L^2(0,T;\dot{H}^{\si_c+\kap/2})$. Then for each $\tht_0\in H^{\si_c}$, (\ref{E:mod:claw}) has a unique solution $\theta\in C([0,T];H^{\si_c})\cap L^{2}(0,T;\Hdot^{\si_{c}+\frac{\kap}{2}}) $ satisfying
\begin{align*}
    \sup_{0\leq t\leq T}\nrm{\tht(t)}_{H^{\si_{c}}}\le \nrm{\tht_0}_{H^{\si_c}}\exp \left(C\Sob{q}{L^2_T\dot{H}^{\si_c+\kap/2}}^2\right).
\end{align*}
\end{Thm}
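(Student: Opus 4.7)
The plan is to treat Theorem~\ref{T:transport} as a linear problem in $\tht$ with the coefficient $q$ prescribed, using an approximation plus an a priori estimate that simultaneously implies uniqueness. Concretely, I would regularize by frequency truncation, replacing $\tht_{0}$ and $q$ by $S_{N}\tht_{0}$ and $S_{N}q$, which produces smooth approximate solutions $\tht^{N}\in C([0,T];H^{s})$ for every $s$ via standard parabolic theory for linear equations with smooth coefficients. A uniform bound on $\tht^{N}$ in $L^\infty_{T}H^{\si_{c}}\cap L^{2}_{T}\Hdot^{\si_{c}+\kap/2}$ of the form asserted in the theorem, together with weak-$*$ convergence and Aubin--Lions compactness, would then allow passage to the limit in the equation. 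Since \eqref{E:mod:claw} is linear in $\tht$, uniqueness follows immediately from the same estimate applied to the difference of two candidate solutions with zero initial data.

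The heart of the argument is the Littlewood--Paley energy estimate at level $\si_{c}$: apply $\lpj$ to \eqref{E:mod:claw}, pair with $\lpj\tht$, multiply by $2^{2\si_{c}j}$, and sum over $j\in\ZZ$. The dissipative term produces the desired $\gam\Sob{\tht}{L^{2}_{T}\Hdot^{\si_{c}+\kap/2}}^{2}$ on the left, so everything reduces to estimating the nonlinear sum
\begin{align*}
\sum_{j\in\ZZ}2^{2\si_{c}j}\lb\lpj\Div F_{q}(\tht),\lpj\tht\rb.
\end{align*}
In the regime $\be<1+\kap$, one has $F_{q}(\tht)=u\tht$ with $u:=\nabla^{\perp}\Lam^{\be-2}q$ divergence-free, so $\lb u\cdot\nabla\lpj\tht,\lpj\tht\rb=0$, leaving only $\lb[\lpj,u]\cdot\nabla\tht,\lpj\tht\rb$, which I would bound via \cref{lem:commutator2a}. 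In the regime $\be\geq1+\kap$, the extra flux piece $\Lam^{\be-2}((\nabla^\perp\tht)q)$ is engineered so that, upon moving $\partial_{\ell}$ onto the factor $(\nabla^{\perp}\tht)_{\ell}q$ and exploiting $\Div\nabla^{\perp}\tht=0$ to kill the ``diagonal'' contribution, the divergence collapses to a sum of commutators of the form $[\Lam^{\be-2}\bdy_{\ell},q](\nabla^{\perp}\tht)_{\ell}$, which are exactly the operators controlled by \cref{lem:commutator2}. The two lemmas together then handle both regimes through their free exponents $\rho_{1},\rho_{2}$, which allow flexible redistribution of derivatives between $q$ and $\tht$.

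With the commutator bounds in hand, $\rho_{1}$ and $\rho_{2}$ would be chosen so that after Cauchy--Schwarz in $\ell^{2}_{j}$ and Young's inequality, one factor of $\tht$ falls into $\Hdot^{\si_{c}+\kap/2}$ (absorbable into the dissipation on the left), one factor of $q$ falls into $\Hdot^{\si_{c}+\kap/2}$ (integrable in time by hypothesis), and the remaining factor of $\tht$ sits at $\Hdot^{\si_{c}}$; Gronwall's inequality then delivers the asserted exponential bound in $\Sob{q}{L^{2}_{T}\Hdot^{\si_{c}+\kap/2}}^{2}$. Strong continuity in time with values in $H^{\si_{c}}$ would come from combining the uniform bound with continuity of the $H^{\si_{c}}$ norm derived from the Littlewood--Paley energy identity. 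The main obstacle I anticipate is the supercritical range $\be\geq1+\kap$, where the velocity is less regular than the scalar by $\be-1>\kap$ derivatives, so the transport-type commutator alone (as in \cite{Miura2006}) would lose more derivatives than the dissipation can accommodate. The entire point of the modified flux \eqref{def:mod:flux} is to expose the Hu--Kukavica--Ziane commutator $[\Lam^{\be-2}\bdy_{\ell},q]$ that \cref{lem:commutator2} is custom-built to handle, and selecting $\rho_{1},\rho_{2}$ within the admissible ranges to precisely match $\si_{c}$ and $\si_{c}+\kap/2$ is the delicate point on which closure of the estimate turns.
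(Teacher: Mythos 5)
Your a priori estimate is in the right spirit and largely coincides with what the paper actually does: the $H^{\si_c}$ bound in \cref{T:transport} is obtained from the Littlewood--Paley energy identity, using the divergence-free cancellation plus \cref{lem:commutator2a}-type transport commutators when $\be<1+\kap$, and the modified-flux commutators $[\Lam^{\be-2}\bdy_\ell,\cdot]$ of \cref{lem:commutator2} when $\be\geq1+\kap$; Gronwall with $\Sob{q}{L^2_T\dot{H}^{\si_c+\kap/2}}^2$ then closes exactly as you describe (see \eqref{E:skew-adjoint}--\eqref{est:L2:critical} and \eqref{est:Linfty:endpoint}--\eqref{est:sobolev:summary}). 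Two caveats on this part: at the $\dot H^{\si_c}$ level the paper does \emph{not} get away with a single commutator for the extra flux piece --- it needs the full decomposition into $J_1,\dots,J_5$, in which $J_3$ and $J_5$ are \emph{double} commutators (obtained by writing $\Lam^{\til\si}=-\Lam^{\til\si-2}\bdy_l\bdy_l$ as in \eqref{split:Lam:rewrite}) estimated with \cref{lem:commutator3} specialized to $\lam\equiv0$; your sketch stops at the single-commutator level and would not by itself close the estimate in the supercritical range. Uniqueness from linearity and the $L^2$ estimate is correct and immediate.

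The genuine gap is in your construction of approximate solutions. Replacing $\tht_0$ and $q$ by $S_N\tht_0$ and $S_Nq$ does not frequency-localize the solution, and ``standard parabolic theory'' does not apply to \eqref{E:mod:claw} even with smooth coefficients: in the regime $\be\geq1+\kap$ the extra flux term $\Lam^{\be-2}{\nabla}\cdotp(({\nabla}^\perp\tht)q)$ is an operator of order $\be-1>\kap$ acting on $\tht$, so the equation is not dominated by its dissipative term $\gam\Lam^\kap$ and no mild-solution or classical parabolic argument produces the smooth solutions $\tht^N$ you start from. This is precisely why the paper regularizes the \emph{equation} with an artificial viscosity $-\epsilon\De\tht$ (see \eqref{E:forced-transport:reg} in \cref{sect:app:thm}): the full Laplacian's smoothing $e^{\epsilon\De(t-s)}$ absorbs the order-$(\be-1)$ loss with an integrable singularity $(t-s)^{-(\be+2)/4}$, so Picard's theorem yields a solution $\tht^\epsilon$ on a time interval that the uniform bounds \eqref{est:sobolev:summary} then extend to $[0,T]$, after which Aubin--Lions gives the limit. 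If you want to avoid artificial viscosity you would have to truncate the equation itself (e.g.\ a Friedrichs projection $\bdy_t\tht^N=-S_N(\cdots S_N\tht^N\cdots)$, turning the problem into an ODE with bounded operators while preserving the commutator cancellations), but some such device is needed; as written, the existence of your approximating sequence is unjustified exactly where the theorem is hardest.
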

\cref{T:transport} can be proved by an artificial viscosity argument. A sketch of the proof is provided in \cref{sect:app:thm}. The reader is referred to \cite{KumarThesis2021} for additional details.
With this in hand, we will only develop apriori estimates for solutions to (\ref{E:mod:claw}). We ultimately find it expedient to perform these estimates in Gevrey classes and simply specialize them to the case where the exponential rate, $\lam$, in the Gevrey norm is identically zero to obtain estimates in the corresponding Sobolev spaces. 

\subsection{Apriori estimates}
Given $q$, let $v$ denote
    \begin{align}\label{def:vel:phi}
        v:=-{\nabla}^\perp\Lam^{\be-2}q.
    \end{align}
Then ${\nabla}\cdotp v=0$. We will begin by establishing $L^2$ estimates. Then we will proceed to establishing the claimed Sobolev space estimates. Lastly, we provide estimates in the Gevrey norm topology $\dot{G}^{\lam}_{\al,\si_c+\delta}$. 

\subsubsection{$L^2$ estimates} 
Since $v$ is divergence-free, integrating by parts, we have $\lb v\cdotp{\nabla} h,h\rb=0$, for any $h$ sufficiently smooth. Combined with the skew self-adjointness of the operator ${\nabla}\Lam^{\be-2}$, we deduce
\begin{align}\label{E:skew-adjoint}
    \langle {\nabla}\cdotp F_{q}(\tht),\theta\rangle=\lb\Lam^{\be-2}{\nabla}\cdotp(({\nabla}^\perp \tht)q),\tht\rb=-\lb\Lam^{\be-2}{\nabla}\cdotp(({\nabla}^\perp q)\tht),\tht\rb=-\frac{1}2\lb [{\nabla}\Lam^{\be-2},{\nabla}^\perp q]\tht,\tht\rb.
\end{align}
By \cref{lem:commutator2} with $\rho_1=\rho_2=\kap/2$, and Young's inequality, we have
\begin{align}
    |\langle {\nabla}\cdotp F_{q}(\tht),\theta\rangle|&=\frac{1}2|\lb [\Lam^{\be-2}{\nabla},{\nabla}^\perp {q}]\tht,\tht\rb|\notag\\
    &\le C 
    \|\theta\|_{\dot{H}^{\frac{\kap}{2}}}\|\theta\|_{L^2}\|{q}\|_{\dot{H}^{\si_{c}+\frac{\kap}{2}}}\notag \\
    &\le \frac{\gam}{100}\|\theta\|^{2}_{\dot{H}^{\frac{\kappa}{2}}}+C\|\theta\|^{2}_{L^2}\|{q}\|_{\dot{H}^{\si_{c}+\frac{\kap}{2}}}^{2}. \label{E:Young}
\end{align}
Taking the inner product in $L^2$ of (\ref{E:mod:claw}) with $\tht$ and using (\ref{E:Young}) yields
\begin{align*}
    \frac{d\|\theta\|^{2}_{L^2}}{dt}+\gam\Sob{\tht}{\Hdot^{\frac{\kap}{2}}}^{2}\le C\|\theta\|^{2}_{L^2}\|{q}\|_{\dot{H}^{\si_{c}+\frac{\kap}2}}^{2}.
\end{align*}
Integrating in time, we obtain
\begin{align}\label{est:L2:critical}
     \nrm{\tht}_{L^{\infty}_{T}L^2}^{2}\le\nrm{\tht_0}_{L^2}^{2}\exp(C\nrm{q}_{L^{2}_{T}\Hdot^{\si_{c}+\frac{\kap}{2}}}^{2}). 
\end{align}

\subsubsection{Preparatory estimates}
It will be economical at this point to introduce the Gevrey operator and derive the estimates with this operator included since the commutator estimates that we apply will reduce accordingly to the Sobolev setting upon setting the rate, $\lam$, in the Gevrey norm to be identically zero. Since we will have to make different choices for various parameters in each setting, we will then specialize to the Sobolev setting first, and then return to the Gevrey setting again afterwards.

For the remainder of \cref{sect:mod:flux}, let us assume that 
    \begin{align*}
        0\leq{\de}<\kap,\quad\lam(t)\quad \text{is differentiable in $t$},\quad \lam(0)=0.
    \end{align*}
Observe that $\lam\equiv0$ is allowed. To help contain expressions, we will make use of the notations
    \begin{align*}
        \til{h}_j:={G}_\al^{\lam(t)}\lpj h,\quad \Lam^\si_j:=\Lam^\si\lpj,\quad \si\in\RR.
    \end{align*}
{Observe that for $\ph=\ph(t,x)$, sufficiently smooth in $t,x$, we have}
    \begin{align}\label{identity:dt:Gevrey}
        {\bdy_t({G}_\al^{\lam(t)}\ph)=\lam'(t){G}_\al^{\lam(t)}\Lam^\al\ph+{G}_\al^{\lam(t)}\bdy_t\ph.}
    \end{align}
{Upon applying the operator ${G}_\al^{\lam(t)}\Lam_j^{\si_{c}+\delta}$ to (\ref{E:mod:claw}) and invoking \eqref{identity:dt:Gevrey} with $\ph=\Lam_j^{\si_{c}+\de}\tht$, one has}
    \begin{align}\label{E:lambda-beta}
    	\begin{split}
	     { \bdy_t(\Lam^{\si_c+{\de}}\til{\tht}_j)-}&{\lam'(t)\Lam^{{\si_c+{\de}}+\al}\til{\tht}_j +{\gam} \Lam^{\si_c+{\de}+\kappa}}\til{\tht}_j +{G}^{\lam(t)}_\al\Lam^{\si_c+\de}_j{\nabla}\cdotp F_q(\theta)=0.
    	\end{split}
	\end{align}
{Then by \eqref{def:vel:phi}, taking the inner product in $L^2$ of \eqref{E:lambda-beta} with ${G}_\al^{\lam(t)}\Lam_j^{\sigma_{c}+\de}\tht$ yields}
	\begin{align}\label{balance:gevrey:basic}
	   { \frac{1}{2}\frac{d}{dt}\Sob{\Lam^{\si_c+{\de}}\til{\tht}_j}{L^2}^2+}{{\gam} \Sob{\Lam^{{\si_c+{\de}}+\kap/2}\til{\tht}_j}{L^2}^2}{=\lam'(t)\Sob{\Lam^{{\si_c+{\de}}+\al/2}\til{\tht}_j}{L^2}^2+\lb {G}_\al^{\lam(t)}\Lam_j^{\si_c+{\de}}{\nabla}\cdotp F_q(\tht),\Lam^{\si_c+{\de}}\til{\tht}_j\rb.}
	\end{align}
We will now treat the trilinear terms; it will be divided into two cases: ${\be<1+\kap}$ and ${\be\ge1+\kap}$.

\subsubsection*{\textbf{Case: $\be<1+\kap$}}. Observe that in this case ${1<\be<{\si_c}<2}$ and $F_q(\tht)=-v\tht$. Let  $\de \in [0,2-\si_c)$. We decompose the term involving the flux in \eqref{balance:gevrey:basic} as
	\begin{align*}
	I:=&\left \lb {G}_\al^{\lam(t)}\Lam_j^{\si_c+{\de}}{\nabla}\cdotp(v\tht),\Lam^{\si_c+{\de}}\tiltht_j\right\rb \\
	=&  \left(\left  \langle {G}_\al^{\lam(t)}\Lam^{\si_c+{\de}}_j({v} \cdot \nabla) \theta, \Lam^{\si_c+{\de}}\tiltht_j  \right \rangle -\left \langle ({v}\cdot\nabla) {G}_\al^{\lam(t)}\Lam_j^{\si_c+{\de}}\tht,\Lam^{\si_c+{\de}} \tiltht_j \right \rangle\right)+\left \langle ({v}\cdotp {\nabla}) \Lam^{\si_c+{\de}} \tiltht_j,\Lam^{\si_c+{\de}} \tiltht_j \right \rangle \notag\\
	=&I_1+I_2\notag
	\end{align*}
Since $I_{2}=0$, it suffices to obtain a bound on $I_{1}$. Observe that
     \begin{align}\notag
    I_{1}=\sum_{\ell=1,2}\left \langle [{G}^{\lam(t)}_\al \Lam^{{\si_c+{\de}}}_j,v^\ell]\bdy_{\ell}\tht,\Lam^{\si_c+{\de}}\tiltht_j  \right \rangle. 
    \end{align}
Since $\de<2-\si_c$, we may apply \cref{lem:commutator3} with { ${\si}=\be-\kap+{\de}$, $\rho=0$, and $f=\bdy_\ell\tht$, $g= v^\ell$, $h=\Lam^{\si_c+{\de}}\tiltht_j$}, so that, along with applying Bernstein's inequalities, we obtain
\begin{align}\label{est:I1}
    \abs{I_{1}}\leq& C\left(c_j2^{\nu j}\Sob{v}{\dot{G}_{\al,2-\nu}^{\lam(t)}}\Sob{{\nabla}\tht}{\dot{G}^{\lam(t)}_{\al,{\be-\kap+{\de}}}}+ \lam(t)2^{(\si_c+\de+\al-\ze)j}\Sob{{E}_\al^{\lam(t)} S_{j-3}v}{\dot{H}^{1+\ze}}\Sob{{\nabla}\tiltht_j}{L^2}\right)\Sob{\Lam^{\si_c+{\de}}\tiltht_j}{L^2}\notag\\
    \leq& C\left(c_j2^{\nu j}\Sob{q}{\dot{G}_{\al,1+\be-\nu}^{\lam(t)}}\Sob{\tht}{\dot{G}^{\lam(t)}_{\al,{\si_c+{\de}}}}+ \lam(t)2^{(1+\al-\ze)j}\Sob{{E}_\al^{\lam(t)} S_{j-3}q}{\dot{H}^{\be+\ze}}\Sob{\tiltht_j}{\Hdot^{\si_c+\de}}\right)\Sob{\Lam^{\si_c+{\de}}\tiltht_j}{L^2},
\end{align}
where $\ze\in[0,1)$, $\nu\in(0,1)$, and $\{c_j\}\in\ell^2(\ZZ)$ with $\Sob{\{c_j\}}{\ell^2}\leq1$.

\subsubsection*{\textbf{Case: ${\be\ge 1+\kap}$} }
Observe that in this case $\si_c\geq2$. We decompose the term involving the flux in \eqref{balance:gevrey:basic} as
	\begin{align*}
	J:&=\left\lb {G}_\al^{\lam(t)}\Lam_j^{\si_c+{\de}}{\nabla}\cdotp F_q(\tht),\Lam^{\si_c+{\de}}\til{\tht}_j\right\rb\notag\\
	&=\underbrace{\left  \lb {G}_\al^{\lam(t)}\Lam_j^{\si_c+{\de}} (\nabla^{\perp}\Lambda^{\be-2}{q} \cdot \nabla \tht),\Lam^{\si_c+{\de}}\til{\tht}_j\right \rb}_{J^{a}}-\underbrace{\left \lb {G}_\al^{\lam(t)}\Lam_j^{\si_c+{\de}+\be-2} (\nabla^{\perp}{q} \cdot \nabla \tht),\Lam^{\si_c+{\de}}\til{\tht}_j\right\rb\notag}_{J^b}\\
	&=J_{1}+J_{2}+J_{3}+J_{4}+J_{5},
\end{align*}where
\begin{align}
	J_{1}=& \underbrace{\left \langle (\nabla^{\perp}\Lambda^{\be-2}\Lam^{\si_c+{\de}}\til{{q}}_j \cdot \nabla) \tht,\Lam^{\si_c+{\de}} \tiltht_j \right \rangle}_{J_{1}^a} -\underbrace{\left \langle \nabla^{\perp}\Lambda^{\be-2}\cdot(\Lam^{\si_c+{\de}}\til{{q}}_j \nabla \tht),\Lam^{\si_c+{\de}} \tiltht_j \right \rangle\notag}_{J_{1}^b}\\
	J_{2}=&\left \langle (\nabla^{\perp}\Lambda^{\be-2}{q}\cdot {\nabla} \Lam^{\si_c+{\de}} \tiltht_j),\Lam^{\si_c+{\de}} \tiltht_j \right \rangle\notag\\
	J_{3}=&J^{a}-J_{1}^{a}-J_{2}\notag\\
	=& \left \{ \left  \langle {G}_\al^{\lam(t)}\Lam^{\si_c+{\de}}_j(\nabla^{\perp}\Lambda^{\be-2}{q} \cdot \nabla \theta),\Lam^{\si_c+{\de}}\tiltht_j  \right \rangle
	- \left \langle (G_\al^{\lam(t)}\Lam_j^{\si_c+{\de}}\nabla^{\perp}\Lambda^{\be-2}{{q}} \cdot \nabla) \tht,\Lam^{\si_c+{\de}} \tiltht_j \right \rangle \right.\notag \\ 
	&\left.\quad -\left \langle (\nabla^{\perp}\Lambda^{\be-2}{q}\cdot{\nabla} G_\al^{\lam(t)}\Lam_j^{\si_c+{\de}} \tht),\Lam^{\si_c+{\de}} \tiltht_j \right \rangle \right\}\notag\\
	J_{4}=&-\left \langle \Lam^{\frac{\be-2}2}(\nabla^{\perp}{q}\cdot {\nabla} \Lam^{\frac{\be-2}{2}}\Lam^{\si_c+{\de}} \tiltht_j),\Lam^{\si_c+{\de}} \tiltht_j \right \rangle\notag\\
	J_{5}=&-J^{b}+J_{1}^{b}-J_{4} \notag\\=&- \left \{ \left  \langle {G}_\al^{\lam(t)}\Lam^{\si_c+{\de}}_j\Lam^{\be-2}(\nabla^{\perp}{q} \cdot \nabla \theta),\Lam^{\si_c+{\de}}\tiltht_j  \right \rangle
	- \left \langle \nabla^{\perp}\Lambda^{\be-2}\cdotp((G_\al^{\lam(t)}\Lam_j^{\si_c+{\de}}{{q}}) \nabla \tht),\Lam^{\si_c+{\de}} \tiltht_j \right \rangle \right.\notag \\ 
	&\left.\quad -\left \langle \Lam^{\frac{\be-2}2}(\nabla^{\perp}{q}\cdot {\nabla} G_\al^{\lam(t)}\Lam_j^{\si_c+{\de}}\Lam^{\frac{\be-2}{2}} \tht), \Lam^{\si_c+{\de}}\tiltht_j \right \rangle \right\}\notag
\end{align}
Observe that by integrating by parts, we derive $J_{2},J_{4}=0$, so that it suffices to treat $J_1, J_3$ and $J_{5}$.

\subsubsection*{Bound for ${J_1}$\nopunct}:
	 Letting $\bdy^\perp=(-\bdy_2,\bdy_1)$, observe that we can write $J_1$ as
	\begin{align}
	J_{1}&= -\left \langle [\Lambda^{\beta-2}\bdy^\perp_\ell,\bdy_{\ell}\tht]\Lam^{\si_c+{\de}}\til{{q}}_j,\Lam^{\si_c+{\de}}\tiltht_j\right \rangle,\label{J1:commutator:rep}
	\end{align} 
where we sum over repeated indices. By \cref{lem:commutator2} with { $f=\Lam^{\si_c+{\de}}\til{{q}}_j$, $g=\bdy_\ell\tht$,  $h=\Lam^{\si_c+{\de}}\til{\tht}_j$, and ${\rho_1=\rho_2}=\kap-{\de}$}, and by Bernstein's inequality, it follows that
    \begin{align}\label{est:J1}
        |J_1|&\leq C\Sob{{\nabla}\tht}{\dot{H}^{\be-\kap+{\de}}}\Sob{\Lam^{\si_c+{\de}}\til{{q}}_j}{\dot{H}^{\kap-\de}}\Sob{\Lam^{\si_c+{\de}}\til{\tht}_j}{L^2}\notag\\
        &\leq Cc_j2^{\nu j}\Sob{\tht}{\dot{H}^{{\si_c+{\de}}}}\Sob{{q}}{\dot{G}_{\al,1+\be-\nu}^{\lam(t)}}\Sob{\Lam^{\si_c+{\de}}\tiltht_j}{L^2},
    \end{align}
for any $\nu\in\RR$, for some $\{c_j\}\in\ell^2(\ZZ)$ such that $\Sob{\{c_j\}}{\ell^2}\leq1$.

\subsubsection*{{Bound for} ${J_3}$\nopunct}:
    We will make use of the following notation
        \begin{align}\label{def:A:perp}
        A=\Lam^{\be-2}\bdy^\perp,\quad \bdy^\perp=(-\bdy_2,\bdy_1),
        \end{align}
    so that $A_\ell=\Lam^{\be-2}\bdy^\perp_\ell$, for $\ell=1,2$. Observe that we may then rewrite $J_3$ as
	\begin{align*}
	\begin{split}
	J_{3}=&\left \langle {G}_\al^{\lam(t)}\Lam_j^{\si_c+{\de}}(A_{\ell}{q}\,\bdy_{\ell} \theta),\Lam^{\si_c+{\de}}\tiltht_j  \right \rangle
	- \left \langle G_\al^{\lam(t)}(\Lam^{\si_c+{\de}} A_{\ell}{{q}}_j) \bdy_\ell \tht,\Lam^{\si_c+{\de}} \tiltht_j \right \rangle 
	-\left \langle A_{\ell}{q}\,\bdy_\ell \Lam^{\si_c+{\de}} G_\al^{\lam(t)}\tht_j,\Lam^{\si_c+{\de}}\tiltht_j \right \rangle.
	\end{split}
	\end{align*}
	We observe, as in \cite{HuKukavicaZiane2015}, that we may write $J_{3}$ as a double commutator. Indeed, for any $\widetilde{\si}>2$, we have
	\begin{align}\label{split:Lam:rewrite}
	\Lam^{\widetilde{\si}}f=\Lam^{\widetilde{\si}-2}(-\de)f=-(\Lam^{\widetilde{\si}-2}\bdy_{l})\bdy_{l}f.
	\end{align}
	Then by applying \eqref{split:Lam:rewrite} and the product rule, we have
	\begin{align*}
	\begin{split}
	J_{3}=&-\left \langle {G}_\al^{\lam(t)} \Lam_j^{{\si_c+{\de}}-2}\bdy_l(\bdy_{l}A_{\ell}{q}\,\bdy_{\ell}\theta),\Lam^{\si_c+{\de}} \tiltht_j  \right \rangle
	+ \left \langle  ({G}_\al^{\lam(t)}\Lam_j^{{\si_c+{\de}}-2}\bdy_l(\bdy_{l}A_{\ell}{q})) \bdy_{\ell} \tht,  \Lam^{\si_c+{\de}}\tiltht_j \right \rangle \\
    	&-\left \langle {G}_\al^{\lam(t)} \Lam_j^{{\si_c+{\de}}-2}\bdy_l(A_{\ell}{q}\,\bdy_{l}\bdy_{\ell} \theta),\Lam^{\si_c+{\de}} \tiltht_j  \right \rangle
    	+\left \langle A_{\ell}{q}  ({G}_\al^{\lam(t)}\Lam_j^{{\si_c+{\de}}-2}\bdy_l(\bdy_{l}\bdy_{\ell} \tht)),\Lam^{\si_c+{\de}} \tiltht_j \right \rangle  \notag\\
    	=&-\left\lb [{G}_\al^{\lam(t)} \Lam_j^{{\si_c+{\de}}-2}\bdy_l,\bdy_\ell\tht]\bdy_{l}A_{\ell}{q},\Lam^{\si_c+{\de}}\tiltht_j\right\rb
    	-\left\lb [{G}_\al^{\lam(t)} \Lam_j^{{\si_c+{\de}}-2}\bdy_l,A_{\ell}{q}]\bdy_l\bdy_\ell\tht,\Lam^{\si_c+{\de}}\tiltht_j\right\rb\notag\\
    	=&J_3^a+J_3^b,
	\end{split}
	\end{align*}
where we sum over repeated indices. By \cref{lem:commutator3} with  {$f=\bdy_{l}A_{\ell}{q}$, $g=\bdy_\ell\tht$, $h=\Lam^{\si_c+{\de}}\tiltht_j$} and {$\rho=0$, ${\si}=\si_c-2+\de$}, and by Bernstein's inequality, we have
    \begin{align}\label{est:J3a}
         |J_3^a|\leq& C\left(c_j2^{\nu j}\Sob{\Lam^\be {q}}{\dot{G}^{\lam(t)}_{\al,1-\nu}}\Sob{{\nabla} \tht}{\dot{G}^{\lam(t)}_{\al,{\si_c-1+\de}}}
      + \lam(t)2^{(\si_c-1+\de+\al-\ze)j}\Sob{{E}_\al^{\lam(t)} S_{j-3}{\nabla}\tht}{\dot{H}^{1+\ze}}\Sob{\Lam^\be\til{q}_j}{L^2}\right)\Sob{\Lam^{\si_c+{\de}}\tiltht_j}{L^2}\notag\\
      \leq& Cc_j\left(2^{\nu j}\Sob{ {q}}{\dot{G}^{\lam(t)}_{\al,1+\be-\nu}}\Sob{ \tht}{\dot{G}^{\lam(t)}_{\al,{\si_c+\de}}}
      + \lam(t)2^{(1+\al-\ze)j}\Sob{{E}_\al^{\lam(t)} S_{j-3}\tht}{\dot{H}^{\be+\ze}}\Sob{ {q}}{\dot{G}^{\lam(t)}_{\al,\si_c+\de}}\right)\Sob{\Lam^{\si_c+{\de}}\tiltht_j}{L^2},
    \end{align}
for some $\{c_j\}\in\ell^2(\ZZ)$ such that $\Sob{\{c_j\}}{\ell^2}\leq1$ and where $\zeta \in [0,1)$, $\nu\in(0,1)$. On the other hand, we apply \cref{lem:commutator3} with $f=\bdy_l\bdy_\ell\tht$, $g=A_{\ell}{q}$, $h=\Lam^{\si_c+{\de}}\tiltht_j$, and { ${\si}=\si_c-2+\de$, $\rho=0$}, and Bernstein's inequality to arrive at the same bound for $J_{3}^{b}$
    \begin{align}\label{est:J3b}
        |J_3^b|&\leq C\left(c_j2^{\nu j}\Sob{A {q}}{\dot{G}^{\lam(t)}_{\al,2-\nu}}\Sob{\De\tht}{\dot{G}^{\lam(t)}_{\al,{\be-1-\kap+\de}}}+\lam(t)2^{(\si_c-1+\de+\al-\ze)j}\Sob{{E}_\al^{\lam(t)} S_{j-3}A{q}}{\dot{H}^{1+\ze}}\Sob{\De\til{\tht}_j}{L^2}\right)\Sob{\Lam^{\si_c+{\de}}\tiltht_j}{L^2},\notag\\
        &\leq Cc_j\left(2^{\nu j}\Sob{ {q}}{\dot{G}^{\lam(t)}_{\al,1+\be-\nu}}\Sob{\tht}{\dot{G}^{\lam(t)}_{\al,{\si_c+\de}}}+\lam(t)2^{(1+\al-\ze)j}\Sob{{E}_\al^{\lam(t)} S_{j-3}{q}}{\dot{H}^{\be+\ze}}\Sob{{\tht}}{\Gdot^{\lam(t)}_{\al,\si_c+\de}}\right)\Sob{\Lam^{\si_c+{\de}}\tiltht_j}{L^2},
    \end{align}
for some $\{c_j\}\in\ell^2(\ZZ)$ such that $\Sob{\{c_j\}}{\ell^2}\leq1$ and where $\zeta \in [0,1)$, $\nu\in(0,1)$.

\subsubsection*{{Bound for} ${J_5}$\nopunct}:	We recall that  
	\begin{align}
	J_5=&- \left \{ \left  \langle {G}_\al^{\lam(t)}\Lam^{\si_c+{\de}}_j\Lam^{\be-2}(\nabla^{\perp}{q} \cdot \nabla \theta),\Lam^{\si_c+{\de}}\tiltht_j  \right \rangle
	- \left \langle \nabla^{\perp}\Lambda^{\be-2}\cdotp((G_\al^{\lam(t)}\Lam_j^{\si_c+{\de}}{{q}}) \nabla \tht),\Lam^{\si_c+{\de}} \tiltht_j \right \rangle \right.\notag \\ 
	&\left.\quad -\left \langle \Lam^{\frac{\be-2}2}(\nabla^{\perp}{q}\cdot {\nabla} G_\al^{\lam(t)}\Lam_j^{\si_c+{\de}}\Lam^{\frac{\be-2}{2}} \tht), \Lam^{\si_c+{\de}}\tiltht_j \right \rangle \right\}.\notag
	\end{align}
	Similar to $J_{3}$, we can re-write $J_{5}$ as a double commutator.  
	By applying \eqref{split:Lam:rewrite} and the product rule, and using the notation $\bdy_{1}^{\perp}=-\bdy_{2}$ and $\bdy_{2}^{\perp}=\bdy_{1}$, we have
	\begin{align*}
	\begin{split}
	J_{5}=&\left \langle {G}_\al^{\lam(t)} \Lam_j^{{\si_c+{\de}}-2}\bdy_l({\nabla}^{\perp}(\bdy_{l}{q})\cdotp{\nabla}\theta),\Lam^{\si_c+{\de}+\be-2} \tiltht_j  \right \rangle
	- \left \langle  ({G}_\al^{\lam(t)}\Lam_j^{{\si_c+{\de}}-2}{\nabla}^{\perp}\De{q})\cdotp{\nabla}\tht,  \Lam^{\si_c+{\de}+\be-2}\tiltht_j \right \rangle \\
    	&+\left \langle {G}_\al^{\lam(t)} \Lam_j^{{\si_c+{\de}}}\Lam^{\be/2-3}\bdy_l(({\nabla}^{\perp}{q}\cdotp{\nabla}(\bdy_{l} \theta)),\Lam^{\si_c+{\de}+\be/2-1} \tiltht_j  \right \rangle
    	-\left \langle ({\nabla}^{\perp}{q}\cdotp{\nabla}) {G}_\al^{\lam(t)}\Lam_j^{{\si_c+{\de}}+\be/2-3}\De \tht,\Lam^{\si_c+{\de}+\be/2-1} \tiltht_j \right \rangle  \notag\\
    	=&\left\lb [{G}_\al^{\lam(t)} \Lam_j^{{\si_c+{\de}}-2}\bdy_l,\bdy_\ell\tht]\bdy_{\ell}^\perp\bdy_{l}{q},\Lam^{\si_c+{\de}+\be-2}\tiltht_j\right\rb
    	+\left\lb [{G}_\al^{\lam(t)} \Lam_j^{{\si_c+{\de}}+\be/2-3}\bdy_l,\bdy^{\perp}_{\ell}{q}]\bdy_\ell\bdy_l\tht,\Lam^{\si_c+{\de}+\be/2-1}\tiltht_j\right\rb\notag\\
    	=&J_5^a+J_5^b.
	\end{split}
	\end{align*}
By \cref{lem:commutator3} with {${\si}=\si_c-2+{\de}$, $\rho=0$ and $f=\bdy^{\perp}_{\ell}\bdy_l{q}$, $g=\bdy_\ell\tht$, $h=\Lam^{\si_c+{\de}+\be-2}\tiltht_j$}, and Bernstein's inequality, we have
    \begin{align}\label{est:J5a}
         |J_5^a| \leq& C\left(c_j2^{\nu' j}\Sob{\De q}{\dot{G}^{\lam(t)}_{\al,1-\nu'}}\Sob{{\nabla}\tht}{\dot{G}^{\lam(t)}_{\al,{\si_c-1+\de}}}
      +\lam(t)2^{(\si_c-1+\de+\al-{\ze})j}\Sob{E_\al^{\lam(t)}S_{j-3}{\nabla}\tht}{\Hdot^{1+{\ze}}}\Sob{\De \til{q}_j}{L^2}\right)\Sob{\Lam^{\si_c+{\de}+\be-2}\tiltht_j}{L^2}\notag\\
    \leq& C2^{(\be-2)j}\left(c_j2^{\nu' j}\Sob{ q}{\dot{G}^{\lam(t)}_{\al,3-\nu'}}\Sob{\tht}{\dot{G}^{\lam(t)}_{\al,{\si_c+\de}}}
      + \lam(t)2^{(1+\al-{\ze})j}\Sob{E_\al^{\lam(t)}S_{j-3}\tht}{\Hdot^{2+{\ze}}}\Sob{ \til{q}_j}{\dot{H}^{\si_c+\de}}\right)\Sob{\Lam^{\si_c+{\de}}\tiltht_j}{L^2}\notag\\
     \leq& C2^{(\be-2)j}c_j\left(2^{\nu' j}\Sob{ q}{\dot{G}^{\lam(t)}_{\al,3-\nu'}}\Sob{\tht}{\dot{G}^{\lam(t)}_{\al,{\si_c+\de}}}
      + \lam(t)2^{(3-\be+\al-{\ze})j}\Sob{E_\al^{\lam(t)}S_{j-3}\tht}{\Hdot^{\be+{\ze}}}\Sob{ {q}}{\dot{G}^{\lam(t)}_{\si_c+\de}}\right)\Sob{\Lam^{\si_c+{\de}}\tiltht_j}{L^2},
    \end{align}
for some $\{c_j\}\in\ell^2(\ZZ)$ such that $\Sob{\{c_j\}}{\ell^2}\leq1$, where  $\zeta \in [0,1)$, $\nu'\in(0,1)$. Similarly, for { ${\si}=\si_c-2+\de$, $\rho=\be/2-1$ and $f=\bdy_\ell\bdy_l\tht$, $g=\bdy^{\perp}_{\ell}{q}$, $h=\Lam^{\si_c+{\de}+\be/2-1}\tiltht_j$}, we apply \cref{lem:commutator3} and Bernstein's inequality to obtain again
    \begin{align}\label{est:J5b}
        |J_5^b|\leq& C\left(c_j2^{\nu' j}\Sob{{\nabla}^\perp {q}}{\dot{G}^{\lam(t)}_{\al,2-\nu'}}\Sob{ \De\tht}{\dot{G}^{\lam(t)}_{\al,{\si_c-2+\de}}}+ \lam(t)2^{(\si_c-1+\de+\al-{\ze})j}\Sob{{E}_\al^{\lam(t)} S_{j-3}{\nabla}^\perp{q}}{\dot{H}^{1+{\ze}}}\Sob{\De \til{\tht}_j}{L^2}\right)\Sob{\Lam^{\si_c+{\de}+\be-2}\tiltht_j}{L^2}\notag\\
        \leq& C2^{(\be-2)j}c_j\left(2^{\nu' j}\Sob{ {q}}{\dot{G}^{\lam(t)}_{\al,3-\nu'}}\Sob{ \tht}{\dot{G}^{\lam(t)}_{\al,{\si_c+\de}}}+ \lam(t)2^{(3-\be+\al-{\ze})j}\Sob{{E}_\al^{\lam(t)} S_{j-3}{q}}{\dot{H}^{\be+{\ze}}}\Sob{{\tht}}{\dot{G}^{\lam(t)}_{\si_c+\de}}\right)\Sob{\Lam^{\si_c+{\de}}\tiltht_j}{L^2},
    \end{align}
for some $\{c_j\}_j\in\ell^2(\ZZ)$ such that $\Sob{\{c_j\}_j}{\ell^2}\leq1$, where  $\zeta \in [0,1)$, $\nu'\in(0,1)$.

\subsubsection*{Summary of preparatory bounds.}

For each case, $\be<1+\kap$ and $\be\ge 1+\kap$, let us now summarize our estimates.

\subsubsection*{\textbf{Case: $\be<1+\kap$}}

Upon returning to \eqref{balance:gevrey:basic} and applying \eqref{est:I1}, we have
    \begin{align}\label{est:I:summary}
    	   \frac{1}{2}\frac{d}{dt}&\Sob{\Lam^{\si_c+{\de}}\til{\tht}_j}{L^2}^2+{\gam} \Sob{\Lam^{{\si_c+{\de}}+\kap/2}\til{\tht}_j}{L^2}^2
    	   \leq\lam'(t)\Sob{\Lam^{{\si_c+{\de}}+\al/2}\til{\tht}_j}{L^2}^2\notag\\
    	    &+ Cc_j\left(2^{\nu j}\Sob{q}{\dot{G}_{\al,1+\be-\nu}^{\lam(t)}}\Sob{\tht}{\dot{G}^{\lam(t)}_{\al,{\si_c+{\de}}}}+ \lam(t)2^{(1+\al-\ze)j}\Sob{{E}_\al^{\lam(t)} S_{j-3}q}{\dot{H}^{\be+\ze}}\Sob{\tht}{\Gdot^{\lam(t)}_{\si_c+\de}}\right)\Sob{\Lam^{\si_c+{\de}}\tiltht_j}{L^2},
\end{align}
where $\ze\in[0,1)$, $\nu\in(0,1)$, and $\{c_j\}\in\ell^2(\ZZ)$ with $\Sob{\{c_j\}}{\ell^2}\leq1$.

\subsubsection*{\textbf{Case: $\be\ge1+\kap$}}

Applying \eqref{est:J1} and \eqref{est:J3a}-\eqref{est:J5b}, we have
    \begin{align}\label{est:J:summary}
    	   \frac{1}{2}&\frac{d}{dt}\Sob{\Lam^{\si_c+{\de}}\til{\tht}_j}{L^2}^2+{\gam} \Sob{\Lam^{{\si_c+{\de}}+\kap/2}\til{\tht}_j}{L^2}^2
    	   \leq\lam'(t)\Sob{\Lam^{{\si_c+{\de}}+\al/2}\til{\tht}_j}{L^2}^2\notag\\
    	    &+  Cc_j\left(2^{\nu j}\Sob{ {q}}{\dot{G}^{\lam(t)}_{\al,1+\be-\nu}}\Sob{\tht}{\dot{G}^{\lam(t)}_{\al,{\si_c+\de}}}+\lam(t)2^{(1+\al-{\ze})j}\Sob{{E}_\al^{\lam(t)} S_{j-3}{q}}{\dot{H}^{\be+{\ze}}}\Sob{{\tht}}{\Gdot^{\lam(t)}_{\al,\si_c+\de}}\right)\Sob{\Lam^{\si_c+{\de}}\tiltht_j}{L^2}\\
    	    &+C2^{(\be-2)j}c_j\left(2^{\nu' j}\Sob{ {q}}{\dot{G}^{\lam(t)}_{\al,3-\nu'}}\Sob{ \tht}{\dot{G}^{\lam(t)}_{\al,{\si_c+\de}}}+ \lam(t)2^{(3-\be+\al-{\ze})j}\Sob{{E}_\al^{\lam(t)} S_{j-3}{q}}{\dot{H}^{\be+{\ze}}}\Sob{{\tht}}{\dot{G}^{\lam(t)}_{\si_c+\de}}\right)\Sob{\Lam^{\si_c+{\de}}\tiltht_j}{L^2},\notag
\end{align}
where ${\ze}\in[0,1)$, $\nu,\nu'\in(0,1)$, and $\{c_j\}\in\ell^2(\ZZ)$ with $\Sob{\{c_j\}}{\ell^2}\leq1$.

\subsubsection{Sobolev space estimates.} We will now specialize the estimates from the previous section to the Sobolev setting by simply taking $\lam(t)\equiv0$. Upon particular choices of the parameters $\de,\nu,\nu'$, we will derive estimates in $L^\infty_T\dot{H}^{\si_c}\cap L^2_T\dot{H}^{\si_c+\frac{\kap}{2}}$. These estimates will ultimately be leveraged to establish existence and uniqueness. To this end, let us choose the parameter $\de=\de(\kap,\be)$ to be defined by
\begin{align}\label{defn:delta:sobolev}
\de=\begin{cases}
\frac{\kap+1-\be}{2},\quad &\text{if $\be<1+\kap$} \\
\frac{\kap}{3},\quad &\text{if $\be \ge 1+\kap$}.
\end{cases}
\end{align}
Observe that $0<\de<\kap/2$.

\subsubsection*{Intermediary $L^{\frac{\kap}\de}_T\dot{H}^{\si_c+\de}$--estimates.}
In order to close estimates in $L^\infty_T\dot{H}^{\si_c}$ and $L^2_T\dot{H}^{\si_c+\frac{\kap}{2}}$, we will first derive an intermediate set of estimates in $L^{\frac{\kap}\de}_T\dot{H}^{\si_c+\de}$.
 Let us choose     \begin{align*}
        \nu=\kap-\de,\quad \nu'=2-\be+\kap-\de.
    \end{align*}
With $\lam\equiv0$ and these choices for $\de,\nu,\nu'$, upon returning to \eqref{est:I:summary} and \eqref{est:J:summary}, then applying the Bernstein inequalities, we derive
       \begin{align*}
    	   \frac{1}{2}\frac{d}{dt}&\Sob{\Lam^{\si_c+{\de}}{\tht}_j}{L^2}^2+c'2^{\kap j}{\gam} \Sob{\Lam^{{\si_c+{\de}}}{\tht}_j}{L^2}^2
    	   \leq
    	    Cc_j2^{(\kap-\de) j}\Sob{q}{\Hdot^{\si_c+\de}}\Sob{\tht}{\Hdot^{{\si_c+{\de}}}}\Sob{\Lam^{\si_c+{\de}}\tht_j}{L^2}.
    \end{align*}
Upon dividing both sides by $\Sob{\Lam^{\si_c+{\de}}_j\tht}{L^2}$, then integrating in time, we obtain
\begin{align*}
    \Sob{\Lam^{\si_c+{\de}}_j\tht(t)}{L^2}&\le e^{-c' 2^{\kap j}\gam t}\Sob{\Lam^{\si_c+{\de}}_j\tht(0)}{L^2}
    +Cc_j2^{(\kap-\de) j}\int_{0}^{t}e^{-c'2^{\kap j}\gam (t-s)}\Sob{ {q}(s)}{\dot{H}^{\si_{c}+\de }}\Sob{\tht(s)}{\dot{H}^{{\si_c+{\de}}}}\,ds.
\end{align*}
Observe that
\begin{align}\label{E:elem1:HLS}
    \sup_{j}2^{(\kap-\de)j}e^{-c'2^{\kap j}\gam (t-s)}\leq C(\gam(t-s))^{-1+\de/\kap}.
\end{align}
With this in hand, we take the $\ell_{2}$--norm in $j$, followed by the $L^{\kap/\de}$--norm in time to obtain
\begin{align}\label{est:intermediate}
    \Sob{\tht}{L^{\frac{\kap}{\de}}_{T}\Hdot_x^{\si_{c}+\de}}\le \mathscr{T}_{1}(T)+C\mathscr{T}_{2}(T),
\end{align}
where 
\begin{align*}
    \mathscr{T}_{1}(T)=\nrm{\left(\sum_{j\in\mathbb{Z}}e^{-c'2^{\kap j+1}\gam t}\Sob{\Lam^{\si_c+\de}_j\tht(0)}{L^2}^{2}\right)^{\frac{1}{2}}}_{L^{\frac{\kap}{\de}}_{T}},
\end{align*}
\begin{align*}
    \mathscr{T}_{2}(T)&=\nrm{\int_{0}^{t}(\gam(t-s))^{-1+\de/\kap}\Sob{{q}(s)}{\Hdot^{\si_{c}+\de}}\Sob{\tht(s)}{\Hdot^{\si_{c}+\de}}\,ds}_{L^{\frac{\kap}{\de}}_{T}}.
\end{align*}
We treat $\mathscr{T}_1(T)$ by applying Minkowski's inequality and the Lebesgue dominated convergence theorem to deduce
\begin{align}\label{est:T1}
     \mathscr{T}_{1}(T)\le C\gam^{-\de/\kap}\nrm{\tht_{0}}_{\Hdot^{\si_c}}, \quad \lim_{T \to 0}\mathscr{T}_{1}(T)=0.
\end{align}
We treat $\mathscr{T}_2(T)$ by applying the Hardy-Littlewood-Sobolev's inequality followed by the Cauchy-Schwarz inequality, we obtain
\begin{align}\label{est:T2}
    \mathscr{T}_{2}(T)&\le C\gam^{-1+\de/\kap} \left\lVert\Sob{{q}(\cdotp)}{\Hdot^{\si_{c}+\de}}\Sob{\tht(\cdotp)}{\Hdot^{\si_{c}+\de}}\right\rVert_{L^{\frac{\kap}{2\de}}_{T}}\notag\\
    &\le C\gam^{-1+\de/\kap}\Sob{{q}}{L^{\frac{\kap}{\de}}_{T}\Hdot^{\si_{c}+\de}}\Sob{\tht}{L^{\frac{\kap}{\de}}_{
    T}\Hdot^{\si_{c}+\de}}.
\end{align}
Upon returning to \eqref{est:intermediate} and applying \eqref{est:T1} and \eqref{est:T2}, we obtain
    \begin{align}\label{est:intermediate:final}
        \Sob{\tht}{L^\frac{\kap}\de_T\dot{H}^{\si_c+\de}}\leq C\gam^{-\de/\kap}\Sob{\tht_0}{\dot{H}^{\si_c}}+C\gam^{-1+\de/\kap}\Sob{{q}}{L^{\frac{\kap}{\de}}_{T}\Hdot^{\si_{c}+\de}}\Sob{\tht}{L^{\frac{\kap}{\de}}_{
    T}\Hdot^{\si_{c}+\de}}.
    \end{align}

\subsubsection*{$L^2_T\dot{H}^{\si_c+\kap/2}$--estimates} For this case, we choose     
    \begin{align*}
        \nu=\frac{\kap}2,\quad \nu'=2-\be+\frac{\kap}2.
    \end{align*}
Referring back to \eqref{est:I:summary} and \eqref{est:J:summary} with this choice, we obtain
\begin{align*}
	   { \frac{1}{2}\frac{d}{dt}\Sob{\Lam^{\si_c+\de}_j{\tht}}{L^2}^2}+{{c'\gam}2^{\kap j} \Sob{\Lam^{\si_c+\de}_j{\tht}}{L^2}^2} \leq c_j2^{\frac{\kap}{2} j}\Sob{ {q}}{\dot{H}^{\si_{c}+\frac{\kap}{2}}}\Sob{\tht}{\dot{H}^{{\si_c+\de}}}\Sob{\Lam^{\si_c+\de}_j\tht}{L^2}.
	\end{align*}
We divide both sides by $2^{({\de}-\frac{\kap}{2})j}\Sob{\Lam^{\si_c+\de}_j\tht}{L^2}$, use Bernstein's inequality, then integrate in time to obtain
\begin{align}\notag
    \Sob{\Lam^{\si_c+{\frac{\kap}{2}}}_j\tht(t)}{L^2}&\le e^{-c'2^{\kap j}\gam t}\Sob{\Lam^{\si_c+{\frac{\kap}{2}}}_j\tht(0)}{L^2}
    +C2^{(\kap-{\de}) j}\int_{0}^{t}e^{-c'2^{\kap j}\gam (t-s)}c_{j}\Sob{ {q}(s)}{\dot{H}^{\si_{c}+\frac{\kap}{2}}}\Sob{\tht(s)}{\dot{H}^{{\si_c+{\de}}}}ds.
\end{align}
Using \eqref{E:elem1:HLS}, and taking the $\ell_{2}$-norm in $j$, followed by the $L^{2}$-norm in time, we have
\begin{align}\label{est:L2:endpoint}
    \Sob{\tht}{L^{2}_{T}\Hdot^{\si_{c}+\frac{\kap}{2}}}\le \mathscr{S}_{1}(T)+C\mathscr{S}_{2}(T),
\end{align}
where 
\begin{align*}
    \mathscr{S}_{1}(T)=\nrm{\left(\sum_{j\in\mathbb{Z}}e^{-c'2^{\kap j+1}\gam t}\Sob{\Lam^{\si_c+{\frac{\kap}{2}}}_j\tht(0)}{L^2}^{2}\right)^{\frac{1}{2}}}_{L^{2}_{T}},
\end{align*}
\begin{align*}
    \mathscr{S}_{2}(T)&=\nrm{\int_{0}^{t}(\gam(t-s))^{-1+\frac{\de}{\kap}}\Sob{{q}(s)}{\Hdot^{\si_{c}+\frac{\kap}{2}}}\Sob{\tht(s)}{\Hdot^{\si_{c}+\de}}\,ds}_{L^{2}_{T}}.
\end{align*}
By direct calculation and an application of the Lebesgue dominated convergence theorem, we have
\begin{align}\label{est:S1}
     \mathscr{S}_{1}(T)\le C\gam^{-1/2}\nrm{\tht_{0}}_{\Hdot^{\si_c}}, \quad \lim_{T \to 0}\mathscr{S}_{1}(T)=0.
\end{align}
Applying the Hardy-Littlewood-Sobolev inequality followed by Holder's inequality, we obtain
\begin{align}\label{est:S2}
    \mathscr{S}_{2}(T) &\le C\gam^{-1+\de/\kap} \nrm{\Sob{{q}(s)}{\Hdot^{\si_{c}+\frac{\kap}{2}}}\Sob{\tht(s)}{\Hdot^{\si_{c}+\de}}}_{L^{\frac{2\kap}{\kap+2\de}}_{T}}\notag\\
    &\le C\gam^{-1+\de/\kap} \nrm{{q}}_{L^{2}_{T}\Hdot^{\si_{c}+\frac{\kap}{2}}}\nrm{\tht}_{L^{\frac{\kap}{\de}}_{T}\Hdot^{\si_{c}+\de}}.
\end{align}
Upon returning to \eqref{est:L2:endpoint} and applying \eqref{est:S1} and \eqref{est:S2}, we obtain 
    \begin{align}\label{est:L2T:final}
        \Sob{\tht}{L^{2}_{T}\Hdot^{\si_{c}+\frac{\kap}{2}}}
        &\le C\gam^{-1/2}\nrm{\tht_{0}}_{\Hdot^{\si_c}}+C\gam^{-1+\de/\kap}\nrm{{q}}_{L^{2}_{T}\Hdot^{\si_{c}+\frac{\kap}{2}}}\nrm{\tht}_{L^{\frac{\kap}{\de}}_{T}\Hdot^{\si_{c}+\de}}.
    \end{align}

\subsubsection*{$L^\infty_T\dot{H}^{\si_c}$--estimates}
Finally, we obtain an estimate of $\nrm{\tht}_{L^{\infty}_{T}\Hdot^{\si_c}}$. For this, we must return to \eqref{est:I:summary} and \eqref{est:J:summary} and, instead of \eqref{defn:delta:sobolev}, we make the choice $\de=0$. We then choose
    \begin{align*}
        \nu=\frac{\kap}2,\quad \nu'=2-\be+\frac{\kap}2.
    \end{align*}
Then \eqref{est:I:summary} and \eqref{est:J:summary} become
\begin{align}
	   {\frac{1}{2}\frac{d}{dt}\Sob{\Lam^{\si_c}_j{\tht}}{L^2}^2}+{c'{\gam}2^{\kap j} \Sob{\Lam^{{\si_c}}_j{\tht}}{L^2}^2}&\leq c_j2^{\frac{\kap}{2} j}\Sob{ {q}}{\dot{H}^{\si_{c}+\frac{\kap}{2}}}\Sob{\tht}{\dot{H}^{{\si_c}}}\Sob{\Lam^{\si_c}_j\tht}{L^2}\notag\\
	   &\le Cc_j^{2}\Sob{ {q}}{\dot{H}^{\si_{c}+\frac{\kap}{2}}}^{2}\Sob{\tht}{\dot{H}^{{\si_c}}}^{2}+\frac{c'\gam}{2}2^{\kap j}\Sob{\Lam^{\si_c}_j\tht}{L^2}^{2},\notag
	\end{align}
where we used Young's inequality in the last step. We subtract the last term on the right,  sum in $j$ and integrate in time to obtain
\begin{align*}
    \nrm{\tht}_{L^{\infty}_{T}\Hdot^{\si_c}}^{2}\le\nrm{\tht_0}_{\Hdot^{\si_c}}^{2}\exp(C\nrm{{q}}_{L^{2}_{T}\Hdot^{\si_{c}+\frac{\kap}{2}}}^{2}). 
\end{align*}
Upon combining this with (\ref{est:L2:critical}), we deduce
\begin{align}
     \nrm{\tht}_{L^{\infty}_{T}H^{\si_c}}\le\nrm{\tht_0}_{H^{\si_c}}\exp(C\nrm{{q}}_{L^{2}_{T}\Hdot^{\si_{c}+\frac{\kap}{2}}}^{2}). \label{est:Linfty:endpoint}
\end{align}

\subsubsection*{Summary of Sobolev space estimates}

Collecting the estimates \eqref{est:intermediate:final}, \eqref{est:L2T:final}, \eqref{est:Linfty:endpoint}, we arrive at
    \begin{align}\label{est:sobolev:summary}
        \begin{split}
       \Sob{\tht}{L^\frac{\kap}\de_T\dot{H}^{\si_c+\de}}&\leq C\gam^{-\de/\kap}\Sob{\tht_0}{\dot{H}^{\si_c}}+C\gam^{-1+\de/\kap}\Sob{{q}}{L^{\frac{\kap}{\de}}_{T}\Hdot^{\si_{c}+\de}}\Sob{\tht}{L^{\frac{\kap}{\de}}_{
    T}\Hdot^{\si_{c}+\de}},\\
     \Sob{\tht}{L^{2}_{T}\Hdot^{\si_{c}+\frac{\kap}{2}}}
        &\le C\gam^{-1/2}\nrm{\tht_{0}}_{\Hdot^{\si_c}}+C\gam^{-1+\de/\kap}\nrm{{q}}_{L^{2}_{T}\Hdot^{\si_{c}+\frac{\kap}{2}}}\nrm{\tht}_{L^{\frac{\kap}{\de}}_{T}\Hdot^{\si_{c}+\de}},\\
        \nrm{\tht}_{L^{\infty}_{T}H^{\si_c}}&\le\nrm{\tht_0}_{H^{\si_c}}\exp(C\nrm{{q}}_{L^{2}_{T}\Hdot^{\si_{c}+\frac{\kap}{2}}}^{2}).
        \end{split}
    \end{align}

\subsubsection{Gevrey class estimates} In this section, we will obtain an apriori estimate for (\ref{E:mod:claw}) in Gevrey classes. First, for a given measurable function $\Phi:(0,T]\longrightarrow \Hdot^{\si_c+{\de}}$, we define
	\begin{align}\label{def:XT}
	&\nrm{\Phi(\cdot)}_{X_{T}}:= \esssup_{0<t\le T}(\gam t)^{\frac{{\de}}{\kap}} \nrm{\Phi(t)}_{\Gdot^{\lam(t)}_{\al, \si_c+{\de}}}
	\end{align}
    where $\dot{G}^\lam_{\al,\si}$ denotes the homogeneous $(\al,\lam,\si)$--Gevrey class defined in \eqref{def:Gev:class}. Let us also choose $\lam(t)$ to be
 \begin{align*}
   { \lam(t):=\veps\gam^{\al/\kap}t^{\al/\kap},\quad {\veps} >0.}
    \end{align*}
For convenience, we will often drop the dependence on $t$. Suppose that $\al$
satisfies
    \begin{align*}
       0<\al<\kap.
    \end{align*}
Let us assume
    \begin{align*}
   0<\de<\begin{cases}
\min\left\{\frac{\kap+1-\be}{2}, 2(\kap-\al),\al\right\},\quad &\text{if $\be<1+\kap$} \\
\min\left\{\frac{\kap}{2}, 2(\kap-\al),\al\right\},\quad &\text{if $\be \ge 1+\kap$}.
\end{cases}
    \end{align*}

Upon returning to \eqref{est:I:summary} and \eqref{est:J:summary}, we choose
    \begin{align*}
        \nu=\kap-\de,\quad\nu'=2-\be+\kap-\de,\quad \ze=1-\kap+\frac{\de}2+\al.
    \end{align*}
These choices yield
\begin{align*}
	 \frac{1}{2}\frac{d}{dt}&\Sob{\Lam^{\si_c+{\de}}\tiltht_j}{L^2}^2+{\gam} \Sob{\Lam^{\si_c+{\de}+\kap/2}\tiltht_j}{L^2}^2\notag\\
	 \leq& \frac{\al}{\kap}\veps \gam^{\al/\kap}t^{{\al}/{\kap}-1} \Sob{\Lam^{\si_c+{\de}+\al/2}\tiltht_j}{L^2}^2
	 +Cc_j2^{(\kap-\de) j}\Sob{{G}_{\al}^{\lam(t)} {q}}{\dot{H}^{\si_c+\de}}\Sob{{G}_{\al}^{\lam(t)} \tht}{\dot{H}^{{\si_c+{\de}}}}\Sob{\Lam^{\si_c+{\de}}\tiltht_j}{L^2}\\
	 &+C\veps \gam^{\al/\kap}t^{\al/\kap}2^{(\kap-{\de}/2)j}\Sob{{G}_{\al}^{\lam(t)}\lpj \tht}{\Hdot^{\si_c+\de}}\Sob{{E}_\al^{\lam(t)} S_{j-3}{q}}{\dot{H}^{\si_{c}+{\de}/2+\al}}\Sob{\Lam^{\si_c+{\de}}\tiltht_j}{L^2}\\
	 &+C\veps \gam^{\al/\kap}t^{\al/\kap}2^{(\kap-{\de}/2)j}\Sob{{G}_\al^{\lam(t)}\lpj {q}}{\Hdot^{\si_c+\de}}\Sob{{E}_\al^{\lam(t)} S_{j-3}\tht}{\dot{H}^{\si_{c}+{\de}/2+\al}}\Sob{\Lam^{\si_c+{\de}}\tiltht_j}{L^2}.
	\end{align*}
	
{Now observe that from Lemma \ref{lem:interpolate:gevrey},  applied with $\rho=1$, $s_1=\al/2$, $s_2=\kap/2$, $f=\Lam^{\si_{c}+\de}\til{\tht}_{j}$, it follows that}
        \begin{equation}\label{est:alpha:kappa}
          {\frac{\al}{\kap}\veps \gam^{\al/\kap}t^{{\al}/\kap-1}\Sob{\Lam^{\si_c+{\de}+\al/2}\til{\tht}_j}{L^2}^2\le C\veps \gam^{\al/\kap}t^{{\al}/\kap-1}\Sob{\Lam^{\si_c+{\de}}{\tht}_j }{\Hdot^{\al/2}}^2+C\veps^{\kap/\al} \gam\Sob{\Lam^{\si_c+{\de}}\til{\tht}_j}{\dot{H}^{\kap/2}}^2},
        \end{equation}
where $C$ depends on $\al$ and $\kap$. We will fix $\veps$ small enough such that
   \begin{align*}
    C\veps^{\kap/\al}\leq\frac{1}2.
    \end{align*}
On the other hand, recall that $E_\al^\lam$ is defined by \eqref{def:gev:avg}. Using the elementary estimate
\begin{align*}
    x^{a}e^{-yx^{b}}\le Cy^{-a/b},\quad a,x\ge0;\quad b,y>0,
\end{align*}
we first observe that for any function $f\in \dot{G}^{\lam(t)}_{\al,0}$, one has
\begin{align}\label{est:sobolev:de'}
  \nrm{\Lam^{{\al-{\de}/2}}{E}^{\lam(t)}_\al f}_{L^2}^2&=\int\left(\int_0^1\Ax^{\al-\de/2}e^{\lam(t)(\tau^\al-1)|\xi|^\al}d \tau\right)^2|\widehat{{G}^{\lam}_{\al}f}(\xi)|^2d\xi\notag\\
    &\le C\lam(t)^{-2(1-\frac{\de}{2\al})}\left(\int_{0}^{1}\frac{1}{(1-\tau^\al)^{1-\frac{\de}{2\al}}} d\tau \right)^2\nrm{{{G}^{\lam}_{\al}f}}_{L^2}^2\notag\\&\le C\veps^{-2(1-\frac{\de}{2\al})}\gam^{-2(\frac{\al}{\kap}-\frac{\de}{2\kap})}t^{-2(\frac{\al}{\kap}-\frac{\de}{2\kap})}\nrm{{G}^{\lam}_{\al}f}_{L^2}^2.
\end{align}
 
 Upon using the estimate in \eqref{est:alpha:kappa}, \eqref{est:sobolev:de'}  dividing both sides by $\Sob{\Lam^{\si_c+{\de}}\tiltht_j}{L^2}$ and applying Bernstein's inequality, we obtain
\begin{align*}
    &\frac{d}{dt}\Sob{\Lam^{\si_c+{\de}}\tiltht_j}{L^2}+{c'\gam}2^{\kap j} \Sob{\Lam^{\si_c+{\de}}\tiltht_j}{L^2}\notag\\
	 &\leq C\veps\gam^{\al/\kap}t^{{\al}/{\kap}-1} 2^{\al j}\Sob{\Lam^{\si_c+{\de}}\tht_j}{L^2}
	 + Cc_j\left(2^{(\kap-\de) j}+\veps^{\frac{\de}{2\al}}\gam^{\frac{\de}{2\kap}} t^{\frac{\de}{2\kap}}2^{(\kap-{\de}/2)j}\right)\Sob{{G}_\al^{\lam(t)} {q}}{\dot{H}^{\si_c+\de}}\Sob{{G}_\al^{\lam(t)} \tht}{\dot{H}^{{\si_c+{\de}}}},
\end{align*}
for some $\{c_j\}\in\ell^2(\ZZ)$ such that $\Sob{\{c_j\}}{\ell^2}\leq1$. By Gronwall's inequality and Bernstein's inequality, we obtain
\begin{align}\label{est:ET:apriori:intermediate}
    \Sob{\Lam^{\si_c+{\de}}\tiltht_j(t)}{L^2}
    \leq& Ce^{-c'2^{\kap j}\gam t}2^{\de j}\Sob{\lpj \tht(0)}{\dot{H}^{\si_c}}+Cc_j\veps\gam^{\al/\kap}\int_{0}^{t}2^{\al j}e^{-c'2^{\kap j}\gam(t-s)}s^{{\al}/{\kap}-1}\Sob{\tht(s)}{\Hdot^{\si_{c}+\de}}\,ds\notag\\
        &+Cc_j\int_{0}^{t}2^{(\kap-\de) j}e^{-c'2^{\kap j}{\gam}(t-s)}\Sob{{G}_\al^{\lam(s)} {q(s)}}{\dot{H}^{\si_c+\de}}\Sob{{G}_\al^{\lam(s)} \tht(s)}{\dot{H}^{{\si_c+{\de}}}}\,ds\notag\\
        & +Cc_j\veps^{\frac{\de}{2\al}}\gam^{\frac{\de}{2\kap}}\int_{0}^{t}2^{(\kap-{\de}/2) j}e^{-c'2^{\kap j}{\gam}(t-s)}s^{\frac{\de}{2\kap}}\Sob{{G}_\al^{\lam(t)} {q(s)}}{\dot{H}^{\si_c+\de}}\Sob{{G}_\al^{\lam(s)} \tht(s)}{\dot{H}^{{\si_c+{\de}}}}\,ds.
\end{align}
We have
\begin{align*}
    \gam^{\al/\kap}\int_{0}^{t}\left(2^{\al j}e^{-c'2^{\kap j}\gam(t-s)}\right)s^{{\al}/{\kap}-1}\Sob{\tht(s)}{\Hdot^{\si_{c}+\de}}\,ds &\le \gam^{\al/\kap}\int_{0}^{t}\left(\frac{C}{(\gam(t-s))^{\frac{\al}{\kap}}}\right)s^{{\al}/{\kap}-1}\Sob{\tht(s)}{\Hdot^{\si_{c}+\de}}\,ds\\
    &\le C\int_{0}^{1}\left(\frac{s^{\frac{\al}{\kap}-1}}{(t-s)^{\frac{\al}{\kap}}(\gam s)^{\frac{\de}{\kap}}}\right)(\gam s)^{\frac{\de}{\kap}}\Sob{\tht(s)}{\Hdot^{\si_{c}+\de}}\,ds\\
    &\le C(\gam t)^{-\frac{\de}{\kap}}B\left(1-\frac{\al}{\kap},\frac{\al-\de}{\kap}\right)\Sob{\tht(\cdotp)}{X_T},
\end{align*}
where $B(a,b)=\int_0^1(1-x)^{a-1}x^{b-1}dx$. Similarly, we estimate the last two terms in \eqref{est:ET:apriori:intermediate} to obtain
\begin{align*}
    \Sob{\Lam^{\si_c+{\de}}\tiltht_j(t)}{L^2} \leq& C(\gam t)^{-{\de}/\kap}\left((\gam t)^{{\de}/\kap}2^{\de j}e^{-c'2^{\kap j}\gam t}\right)\Sob{\lpj \tht(0)}{\dot{H}^{\si_c}}+ Cc_{j}\veps(\gam t)^{-\de/\kap} B\left(1-\frac{\al}{\kap},\frac{\al-\de}{\kap}\right)\Sob{\tht(\cdotp)}{X_T}
    \\
         &+Cc_{j}(\gam t)^{-\de/\kap}\gam^{-1}\left(B\left(\frac{\de}{\kap},1-\frac{2\de}{\kap}\right)+\veps^{\frac{\de}{2\al}}B\left(\frac{\de}{2\kap},1-\frac{3\de}{2\kap}\right)\right)\Sob{q(\cdotp)}{X_T}\Sob{\tht(\cdotp)}{X_T}.
\end{align*}
 We multiply both sides by $(\gam t)^{{\de}/\kap}$, take the $\ell^2(\ZZ)$-norm on, apply the Minkowski inequality, and take the supremum over $0<t\leq T$, to obtain
    \begin{align*}
       \Sob{\tht(\cdotp)}{X_T}\leq C\mathcal{I}_T(\tht_{0})+C\veps\Sob{\tht(\cdotp)}{X_T}+C\gam^{-1}\Sob{{q}(\cdotp)}{X_T}\Sob{\tht(\cdotp)}{X_T},
    \end{align*}
where
\begin{align*}
    \mathcal{I}_T(\tht_{0}):=&\sup_{0<t\le T}\left(\sum_{j\in\ZZ}(\gam t)^{\frac{2\de}{\kap}}2^{2\de j}e^{-c'2^{\kap j+1}\gam t}\Sob{\lpj\tht_{0}}{\Hdot^{\si_c}}^2\right)^{1/2}
    \le C\left(\sum_{j\in\ZZ}\Sob{\lpj\tht_{0}}{\Hdot^{\si_c}}^2\right)^{1/2} \le C\Sob{\tht_{0}}{\Hdot^{\si_c}}.
\end{align*}
Applying the Lebesgue dominated convergence theorem, we obtain
\begin{align}\label{lim:XT:initialdata}
    \lim_{T\to 0} \mathcal{I}_T(\tht_{0})=0.
    \end{align}
    
Upon possibly taking $\veps$ smaller so that
\[C\veps\leq 1/2,\]
we obtain the following apriori estimate for $\Sob{\tht(\cdot)}{X_T}$:
\begin{align}\label{est:ET:apriori}
       { \Sob{\tht(\cdotp)}{X_T}\leq C\mathcal{I}_T(\tht_{0})+C\gam^{-1}\Sob{{q}(\cdotp)}{X_T}\Sob{\tht(\cdotp)}{X_T}}.
    \end{align}

\section{Existence, uniqueness, and smoothing for $1<\beta<2$: Proof of \cref{thm:main:beta}}\label{sect:proofs:main:beta}

We will now carry out the proof of \cref{thm:main:beta}. We do so by introducing a sequence of approximating equations that will satisfy the dissipative perturbation of the conservation law \eqref{E:mod:claw} at each level of the approximation. In particular, we consider the following approximation scheme. 
\begin{align*}
    \begin{cases}
    &\partial_{t}\tht^{0}+\gam \Lam^{\kap}\tht^{0}=0,\quad (x,t)\in \mathbb{R}^{2}\times \mathbb{R}^{+},\\
    &\tht^{0}(x,0)=\tht_{0}(x)
    \end{cases}
\end{align*}
and
\begin{equation}\label{E:approximation}
    \begin{cases}
    &\partial_{t}\theta^{n+1}+ \Div F_{-\tht^{n}}( \theta^{n+1})=-{\gam} \Lam^{\kap}\tht^{n+1},\quad (x,t)\in \mathbb{R}^{2}\times \mathbb{R}^{+},\\
    & \tht^{n+1}(x,0)=\tht_{0}(x),\quad n\in \mathbb{Z}_{\{\ge 0\}}
    \end{cases}
\end{equation}
where 
\begin{align*}
F_{-\tht^{n}}(\tht^{n+1})=\begin{cases} 
 -(\nabla^{\perp}\Lam^{\be-2}\tht^{n}) \theta^{n+1} &\text{if $\be<1+\kap$},\\
 -(\nabla^{\perp}\Lam^{\be-2}\tht^{n}) \theta^{n+1}-\Lam^{\be-2}(({\nabla}^\perp\tht^{n+1})\tht^{n})\quad &\text{if $\be\geq 1+\kap$}.
\end{cases}
\end{align*}

\subsection{Existence}
First, we establish uniform (in $n$) estimates on $\tht^{n+1}$. Invoking apriori estimates in (\ref{est:T1}), (\ref{est:T2}), (\ref{est:S1}) and (\ref{est:S2}) for (\ref{E:approximation}), we conclude that there exists a bounded function $\mathscr{R}(T)\, (:=\mathscr{T}_{1}(T)+\mathscr{S}_{1}(T))$ such that
\begin{align*}
    \mathscr{R}(T)&\le C(\gam^{-1/2}+\gam^{-\de/\kap})\Sob{\tht_{0}}{\Hdot^{\si_c}}, \quad \lim_{T \to 0}\mathscr{R}(T)=0
    \end{align*}
and
    \begin{align*}
    \Sob{\tht^{0}}{L^{2}_{T}\Hdot^{\si_{c}+\frac{\kap}{2}}\cap L^{\frac{\kap}{\de}}_{T}\Hdot^{\si_{c}+\de}}&\le \mathscr{R}(T),\notag\\
    \Sob{\tht^{n+1}}{L^{2}_{T}\Hdot^{\si_{c}+\frac{\kap}{2}}\cap L^{\frac{\kap}{\de}}_{T}\Hdot^{\si_{c}+\de}}&\le\mathscr{R}(T)+C_{1}\Sob{\tht^{n}}{L^{2}_{T}\Hdot^{\si_{c}+\frac{\kap}{2}}\cap L^{\frac{\kap}{\de}}_{T}\Hdot^{\si_{c}+\de}}\Sob{\tht^{n+1}}{L^{2}_{T}\Hdot^{\si_{c}+\frac{\kap}{2}}\cap L^{\frac{\kap}{\de}}_{T}\Hdot^{\si_{c}+\de}}.
\end{align*}
Here $\de$ is given by (\ref{defn:delta:sobolev}). Let $T_{0}$ be chosen small such that $\mathscr{R}(T)\le 1/(4C_{1})$ for $T\in (0,T_0)$. Note that this condition also holds if $\Sob{\tht_0}{\Hdot^{\si_c}}$ is small enough and $T_0=\infty$. We obtain
\begin{align}\label{est:recursive:estimates3}
     \Sob{\tht^{n}}{L^{2}_{T}\Hdot^{\si_{c}+\frac{\kap}{2}}\cap L^{\frac{\kap}{\de}}_{T}\Hdot^{\si_{c}+\de}}&\le 2\mathscr{R}(T), \quad n=0,1,2,...
\end{align}
Applying \cref{T:transport} recursively and using (\ref{est:recursive:estimates3}), we obtain a unique solution of (\ref{E:approximation}) satisfying, for $T\in (0,T_{0})$,
\begin{align}\label{est:uniform:L2}
    &\Sob{\tht^{n+1}}{L^{2}_{T}\Hdot^{\si_{c}+\frac{\kap}{2}}\cap L^{\frac{\kap}{\de}}_{T}\Hdot^{\si_{c}+\de} }\le 2\mathscr{R}(T),\notag\\
    &\Sob{\tht^{n+1}}{L^{\infty}_{T}H^{\si_c}}\le\Sob{\tht_{0}}{H^{\si_c}}\exp(C_{2}\mathscr{R}(T)^{2}).
\end{align}

Next, we establish that the sequence of solutions $\{\tht^{n}\}$ converges to a solution of (\ref{E:dissipative-beta}). Let us denote by $\bar{\tht}^{n+1}=\tht^{n+1}-\tht^{n}$ and $\bar{\tht}^{0}=\tht^{0}$. Then, we can see that the differences $\bar{\tht}^{n+1}$ satisfy the following equation:
\begin{align}\label{E:diff}
    \begin{cases}
    &\partial_{t}\bar{\theta}^{n+1}+\Div F_{-\tht^{n}}(\bar{\tht}^{n+1})+\Div F_{-\bar{\tht}^{n}}(\tht^{n})=-{\gam} \Lam^{\kap}\bar{\tht}^{n+1},\\
    & \bar{\tht}^{n+1}(x,0)=0,\quad n\in \mathbb{Z}_{\{\ge 0\}}
    \end{cases}
\end{align}

We divide the analysis into two cases: ${\beta<1+\kappa}$ and ${\beta\ge1+\kappa. }$
\subsubsection*{\textbf{Case: $\beta<1+\kappa$} }
{Applying $\triangle_j$ to \eqref{E:diff} and then taking the inner product in $L^2$ with $\triangle_{j}\bar{\tht}^{n+1}$ yields}
	\begin{align}\label{balance:diff2}
	   { \frac{1}{2}\frac{d}{dt}\Sob{\triangle_j\bar{\tht}^{n+1}}{L^2}^2+}{{\gam} \Sob{\Lam^{\kap/2}_j\bar{\tht}^{n+1}}{L^2}^2}=&-\lb\triangle_{j}(u^{n} \cdot \nabla \bar{ \theta}^{n+1}),\triangle_j\bar{ \theta}^{n+1}\rb-\lb \triangle_{j}(\bar{u}^{n} \cdot \nabla  \theta^{n}),\triangle_j\bar{ \theta}^{n+1}\rb\notag\\
	   =&J_{1}'+J_{2}'.
	   \end{align}
Since $\nabla \cdot u^{n}=0$, we have
\begin{align*}
    \lb(u^{n} \cdot \nabla)\triangle_j \bar{ \theta}^{n+1},\triangle_j\bar{ \theta}^{n+1}\rb=0.
\end{align*}
As a result, we can express $J_1'$ in terms of a commutator.
\begin{align*}
    J_{1}'&=-\lb[\triangle_{j},(u^{n})^\ell]\bdy_\ell\bar{\tht}^{n+1} ,\triangle_j\bar{ \theta}^{n+1}\rb.
\end{align*}
By applying \cref{lem:commutator2a} with $\rho_1=2-2\kap/3$, $\rho_2=1-\kap/2$, $g=(u^{n})^\ell$, $f=\bdy_{\ell}\bar{\tht}^{n+1}$ and $h=\triangle_j\bar{ \theta}^{n+1}$, we have
\begin{align}\label{est:J1'}
    |J_{1}'|\le b_{j}2^{-\frac{\kap}{6} j}\Sob{\tht^{n}}{\Hdot^{\si_{c}+ \kap/2}}\Sob{\bar{\tht}^{n+1}}{\Hdot^{\frac{2\kap}{3}}}\Sob{\triangle_j\bar{\tht}^{n+1}}{L^2},
\end{align}
for some $\{b_j\}\in\ell^2(\ZZ)$. Upon using Plancherel's theorem, and applying \cref{lem:commutator1} with $\eps=\be-2\kap/3$, $\si=\si_{c}+\de-1$, $g=\bar{u}^{n}_{\ell}$, $f=\bdy_{\ell}  \theta^{n}$ and $h=\triangle_j\bar{ \theta}^{n+1}$, we obtain
\begin{align}\label{est:J2'}
    |J_{2}'|
    &\le  b_{j}2^{(-\si+\epsilon) j} \Sob{\bdy_{\ell}  \theta^{n}}{\Hdot^{\si_{c}+\de-1}}\Sob{\bar{u}^{n}_{\ell}}{\Hdot^{1+\frac{2\kap}{3}-\be}}\Sob{\triangle_j\bar{ \theta}^{n+1}}{L^2} \notag\\
    &\le b_{j}2^{(\frac{\kap}{3}-\de) j} \Sob{\tht^{n}}{\Hdot^{\si_{c}+ \de}}\Sob{\bar{\tht}^{n}}{\Hdot^{\frac{2\kap}{3}}}\Sob{\triangle_j\bar{\tht}^{n+1}}{L^2},
\end{align}
for some $\{b_j\}\in\ell^2(\ZZ)$.

We use the estimates (\ref{est:J1'}) and (\ref{est:J2'}) in (\ref{balance:diff2}), divide both sides by $\Sob{\triangle_j\bar{\tht}^{n+1}}{L^2}$ and apply Bernstein's inequality to obtain
\begin{align*}
    {\frac{d}{dt}\Sob{\triangle_j\bar{\tht}^{n+1}}{L^2}+}{c{\gam}2^{\kap j} \Sob{\triangle_j\bar{\tht}^{n+1}}{L^2}}\le b_{j}2^{-\frac{\kap}{6} j}\Sob{\tht^{n}}{\Hdot^{\si_{c}+ \frac{\kap}{2}}}\Sob{\bar{\tht}^{n+1}}{\Hdot^{\frac{2\kap}{3}}}+b_{j}2^{(\frac{\kap}{3}-\delta) j} \Sob{\tht^{n}}{\Hdot^{\si_{c}+ \de}}\Sob{\bar{\tht}^{n}}{\Hdot^{\frac{2\kap}{3}}}.
\end{align*}
Multiplying both sides by $2^{\frac{2\kap}{3}j}$ and integrating in time, we have
\begin{align*}
     \Sob{\Lam^{\frac{2\kap}{3}}_j\bar{\tht}^{n+1}}{L^2}\le&\int_0^{t}b_{j}2^{\frac{\kap}{2} j}e^{-c\gam2^{\kap j}(t-s)}\Sob{\tht^{n}(s)}{\Hdot^{\si_{c}+ \frac{\kap}{2}}}\Sob{\bar{\tht}^{n+1}(s)}{\Hdot^{\frac{2\kap}{3}}}\,ds\\
     &+\int_0^{t}b_{j}2^{(\kap-\de) j}e^{-c\gam2^{\kap j}(t-s)} \Sob{\tht^{n}(s)}{\Hdot^{\si_{c}+\de}}\Sob{\bar{\tht}^{n}(s)}{\Hdot^{\frac{2\kap}{3}}}\,ds.
\end{align*}
 Using (\ref{E:elem1:HLS}), and taking the $l_{2}$-norm in $j$ and then {$L^{3}$-norm} in time, obtain
\begin{align*}
        \Sob{\bar{\tht}^{n+1}}{L^{3}_{T}\Hdot^{\frac{2\kap}{3}}}\le C(I+II),
\end{align*}
where
\begin{align*}
    I(T)&=\nrm{\int_0^{t}(\gam(t-s))^{-1/2}\Sob{\tht^{n}(s)}{\Hdot^{\si_{c}+ \frac{\kap}{2}}}\Sob{\bar{\tht}^{n+1}(s)}{\Hdot^{\frac{2\kap}{3}}}\,ds}_{L^{3}_{T}},\\
    II(T)&=\nrm{\int_0^{t}(\gam(t-s))^{-1+\de/\kap} \Sob{\tht^{n}(s)}{\Hdot^{\si_{c}+ \de}}\Sob{\bar{\tht}^{n}(s)}{\Hdot^{\frac{2\kap}{3}}}\,ds}_{L^{3}_T}.
\end{align*}
We treat $I$ and $II$ by applying the Hardy-Littlewood-Sobolev inequality followed by Holder's inequality to obtain
\begin{align*}
    I(T)&\le C\Sob{\tht^{n}}{L^{2}_{T}\Hdot^{\si_{c}+\frac{\kap}{2}}}    \Sob{\bar{\tht}^{n+1}}{L^{3}_{T}\Hdot^{\frac{2\kap}{3}}},\\
    II(T)&\le C\Sob{\tht^{n}}{L^{\frac{\kap}{\de}}_{T}\Hdot^{\si_{c}+\de}}\Sob{\bar{\tht}^{n}}{L^{3}_{T}\Hdot^{\frac{2\kap}{3}}}.
\end{align*}
As a result, we have
\begin{align*}
     \Sob{\bar{\tht}^{n+1}}{L^{3}_{T}\Hdot^{\frac{2\kap}{3}}}&\le C_{4}\Sob{\tht^{n}}{L^{2}_{T}\Hdot^{\si_{c}+\frac{\kap}{2}}\cap L^{\frac{\kap}{\de}}_{T}\Hdot^{\si_{c}+\de}}\left(    \Sob{\bar{\tht}^{n+1}}{L^{3}_{T}\Hdot^{\frac{2\kap}{3}}}+    \Sob{\bar{\tht}^{n}}{L^{3}_{T}\Hdot^{\frac{2\kap}{3}}}\right).
\end{align*}
Let $T_{0}$ additionally satisfy $\mathscr{R}(T_{0})\le 1/(6C_{4})$. This implies
\begin{align}\label{est:cauchy2}
    \Sob{\bar{\tht}^{n+1}}{L^{3}_{T_0}\Hdot^{\frac{2\kap}{3}}}\le \frac{1}{2}\Sob{\bar{\tht}^{n}}{L^{3}_{T_0}\Hdot^{\frac{2\kap}{3}}}.
\end{align}
From (\ref{est:uniform:L2}), (\ref{est:cauchy2}), and interpolation inequality (\ref{E:interpolation}), we conclude that there exists a function $\tht(x,t)$ satisfying
\begin{align*}
    &\tht\in L^{\infty}_{T_{0}}H^{\si_c}\cap L^{2}_{T_{0}}\Hdot^{\si_{c}+\frac{\kap}{2}},\\
    & \tht^{n}\xrightharpoonup {\text{w*}}\tht\quad\text{in}\quad L^{\infty}_{T_{0}}H^{\si_c},\\
    & \tht^{n} \longrightarrow \tht\quad \text{in} \quad L^{3}_{T_0}\Hdot^{\si},\quad \forall \, \si\in[2\kap/3,\si_c+\kap/3). \\
    &u^{n}\cdot \nabla \tht^{n+1} \rightarrow u\cdot \nabla \tht \quad \text{in} \quad   L^{3}_{T_{0}}\Hdot^{\til{\si}},\quad \forall\,\til{\si}\in[0,\si_c-1).
\end{align*}
\subsubsection*{\textbf{Case: $\beta\ge 1+\kappa$} }
{Taking the inner product in $L^2$ of \eqref{E:diff} with $\bar{\tht}^{n+1}$ yields}
	\begin{align}\label{balance:diff}
	 {   \frac{1}{2}\frac{d}{dt}\Sob{\bar{\tht}^{n+1}}{L^2}^2+}{{\gam} \Sob{\Lam^{\kap/2}\bar{\tht}^{n+1}}{L^2}^2}= I_{1}'+I_{2}'+I_{3}',
	   \end{align}
where
\[I_{1}'=\lb(\nabla^{\perp}\Lambda^{\be-2}\tht^{n} \cdot \nabla) \bar{ \theta}^{n+1},\bar{ \theta}^{n+1}\rb,\]
\[I_{2}'=\lb \Lam^{\be-2}\nabla \cdot(({\nabla}^\perp\bar{\tht}^{n+1})\tht^{n}),\bar{ \theta}^{n+1}\rb,\]
\begin{align*}
    I_{3}'&=\lb (\nabla^{\perp}\Lambda^{\be-2}\bar{\tht}^{n} \cdot \nabla  \theta^{n}),\bar{ \theta}^{n+1}\rb+\lb \Lambda^{\be-2}\nabla\cdot( (\nabla^{\perp}\tht^{n})\bar{\tht}^{n}) ,\bar{ \theta}^{n+1}\rb.
\end{align*}
Note that $I_{1}'=0$ and $I_{2}'$ can be written in terms of a commutator just like in (\ref{E:skew-adjoint}). 
\begin{align*}
    I_{2}'=-\frac{1}{2}\lb[A_{\ell},\bdy_{\ell}\tht^{n}]\bar{\tht}^{n+1},\bar{\tht}^{n+1} \rb,
\end{align*}
where $A$ is as in (\ref{def:A:perp})
By Lemma $\ref{lem:commutator2}$ with  $\rho_{1}=\rho_{2}=\kap/2$, and Young's inequality we obtain
\begin{align}\label{est:I2'}
    |I_{2}'|&\le C\Sob{\tht^{n}}{\Hdot^{\si_{c}+\frac{\kap}{2}}}\Sob{\bar{\tht}^{n+1}}{\Hdot^{\frac{\kap}{2}}}\Sob{\bar{\tht}^{n+1}}{L^2}\notag\\
    &\le \frac{C}{\gam}\Sob{\tht^{n}}{\Hdot^{\si_{c}+\frac{\kap}{2}}}^{2}\Sob{\bar{\tht}^{n+1}}{L^2}^{2}+\frac{\gam}{4}\Sob{\bar{\tht}^{n+1}}{\Hdot^{\frac{\kap}{2}}}^{2}.
\end{align}
Note that $I_{3}'$ can be re-written as
\begin{align*}
    I_{3}'&=\lb (\nabla^{\perp}\Lambda^{\be-2}\bar{\tht}^{n} \cdot \nabla  \theta^{n}),\bar{ \theta}^{n+1}\rb-\lb \Lambda^{\be-2}\nabla^{\perp}\cdot( \bar{\tht}^{n}(\nabla \tht^{n})) ,\bar{ \theta}^{n+1}\rb\\&=-\lb[A_{\ell},\bdy_{\ell}\tht^{n}]\bar{\tht}^{n},\bar{\tht}^{n+1} \rb.
\end{align*}
Applying the Cauchy-Schwarz inequality, Lemma $\ref{lem:commutator2}$ with  $\rho_{1}=\kap/2$ and $\rho_{2}=0$, and Young's inequality, we obtain
\begin{align}\label{est:I3'}
    |I_{3}'|&=\lb\Lam^{-\frac{\kap}{2}}[A_{\ell},\bdy_{\ell}\tht^{n}]\bar{\tht}^{n},\Lam^{\frac{\kap}{2}}\bar{\tht}^{n+1} \rb\notag\\&\le C\Sob{\tht^{n}}{\Hdot^{\si_{c}+\frac{\kap}{2}}}\Sob{\bar{\tht}^{n+1}}{\Hdot^{\frac{\kap}{2}}}\Sob{\bar{\tht}^{n}}{L^2}\notag\\
    &\le \frac{C}{\gam}\Sob{\tht^{n}}{\Hdot^{\si_{c}+\frac{\kap}{2}}}^{2}\Sob{\bar{\tht}^{n}}{L^2}^{2}+\frac{\gam}{4}\Sob{\bar{\tht}^{n+1}}{\Hdot^{\frac{\kap}{2}}}^{2}.
\end{align}
From estimates (\ref{est:I2'}), (\ref{est:I3'}), and equation (\ref{balance:diff}), we obtain
\begin{align}\notag
    \frac{1}{2}\frac{d}{dt}\Sob{\bar{\tht}^{n+1}}{L^2}^2+\frac{\gam}{2} \Sob{\bar{\tht}^{n+1}}{\Hdot^{\frac{\kap}{2}}}^2\le C\Sob{\tht^{n}}{\Hdot^{\si_{c}+\frac{\kap}{2}}}^{2}\left(\Sob{\bar{\tht}^{n+1}}{L^2}^{2}+ \Sob{\bar{\tht}^{n}}{L^2}^{2}\right).
\end{align}
We integrate in time and take the $L^{\infty}$ norm with respect to time on both sides to obtain
\begin{align}\notag
    \Sob{\bar{\tht}^{n+1}}{L^{\infty}_{T}L^2}\le C_{3}\Sob{\tht^{n}}{L^{2}_{T}\Hdot^{\si_{c}+\frac{\kap}{2}}}\left(    \Sob{\bar{\tht}^{n+1}}{L^{\infty}_{T}L^2}+    \Sob{\bar{\tht}^{n}}{L^{\infty}_{T}L^2}\right).
\end{align}
Let $T_{0}$ additionally satisfy $\mathscr{R}(T_{0})\le 1/(6C_{3})$. This implies
\begin{align}\label{est:cauchy1}
    \Sob{\bar{\tht}^{n+1}}{L^{\infty}_{T_{0}}L^2}\le \frac{1}{2}\Sob{\bar{\tht}^{n}}{L^{\infty}_{T_{0}}L^2}.
\end{align}
From (\ref{est:uniform:L2}), (\ref{est:cauchy1}), and interpolation inequality (\ref{E:interpolation}) we conclude that there exists a function $\tht(x,t)$ satisfying
\begin{align*}
    &\tht\in L^{\infty}_{T_{0}}H^{\si_c}\cap L^{2}_{T_{0}}\Hdot^{\si_{c}+\frac{\kap}{2}},\\
   & \tht^{n}\xrightharpoonup {\text{w*}}\tht\quad\text{in}\quad L^{\infty}_{T_{0}}H^{\si_c},\\
    & \tht^{n} \longrightarrow \tht\quad \text{in} \quad L^{\infty}_{T_{0}}H^{\si}, \forall \,\si\in[0,\si_c).
\end{align*}
It is now straightforward to check that for any $\til{\si}\in [0,\si_{c}-1)$, we have
\begin{align*}
      \Div F_{-\tht^{n}}(\tht^{n+1}) \rightarrow u\cdot \nabla \tht \quad \text{in} \quad L^{\infty}_{T_{0}}H^{\til{\si}}.
\end{align*}

\subsection{Continuity in time} Let $\tht$ be a solution of (\ref{E:dissipative-beta}) obtained above. We have observed that
\[\tht \in L^{2}(0,T^{*};H^{\be+1-\kap/2}), \, u\in L^{2}(0,T^{*};H^{2-\kap/2}),\]
for any $T^{*}<T_{0}$. Since
\[\partial_{t}\theta =-{\gam} \Lam^{\kap}\tht- u \cdot \nabla \theta,\]
we can observe that $\partial_{t}\theta \in L^{1}(0,T^{*};H^{1-\kap})$. As a result, we obtain
\[\tht \in C([0,T^{*});H^{1-\kap}).\] 
By applying Lemma 1.4 in \cite[pg. 263]{TemamBook2001}, we obtain 
\begin{align}\label{E:weak-cont}
    \tht \in C_{w}([0,T^{*});H^{\be+1-\kap}).
\end{align}
Using (\ref{est:uniform:L2}) and (\ref{E:weak-cont}), we have
\begin{align*}
    \limsup_{t \to 0}\Sob{\tht(t)-\tht_{0}}{H^{\be+1-\kap}}^{2}&=\limsup_{t \to 0}\left \{\Sob{\tht(t)}{H^{\be+1-\kap}}^{2}+\Sob{\tht_{0}}{H^{\be+1-\kap}}^{2}-2\langle \tht(t),\tht_{0}\rangle_{H^{\be+1-\kap}}\right\}\\
    &\le \lim_{t \to 0}\left\{\Sob{\tht_{0}}{H^{\be+1-\kap}}^{2}\exp(C \mathscr{R}(t)^2)+\Sob{\tht_{0}}{H^{\be+1-\kap}}^{2}-2\langle \tht(t),\tht_{0}\rangle_{H^{\be+1-\kap}}\right \}\\
    &=0,
\end{align*}
where we used the fact that $\lim_{t \to 0}\mathscr{R}(t)=0$. This establishes the right continuity of $\tht$ at $t=0$. By a standard bootstrap argument, we obtain
\[\tht \in C([0,T_{0});H^{\be+1-\kap}).\]
\subsection{Uniqueness}
To establish uniqueness, we consider two solutions of (\ref{E:dissipative-beta}), denoted by $\tht^{(1)}$ and $\tht^{(2)}$. Let $\bar{\tht}=\tht^{(1)}-\tht^{(2)}, \, \bar{u}=u^{(1)}-u^{(2)}$. Then, $\bar{\tht}$ satisfies the following equation:
\begin{align}
    \label{E:dissipative-beta:uniq}
	\begin{split}
	{\partial_{t}\bar{\theta} +{\gam}\Lam^{\kap}\bar{\tht}+ u^{(1)} \cdot \nabla \bar{\theta}+\bar{u} \cdot \nabla \theta^{(2)}=0.}
	\end{split}
\end{align}
{Taking the inner product in $L^2$ of (\ref{E:dissipative-beta:uniq}) with $\bar{\tht}$ yields}
\begin{align}\label{balance:beta:uniq}
	   \frac{1}{2}\frac{d}{dt}\Sob{\bar{\tht}}{L^2}^2+{\gam} \Sob{\Lam^{\frac{\kap}{2}}\bar{\tht}}{L^2}^2
	   &=-\lb (u^{(1)} \cdot \nabla) \bar{\theta},\bar{\tht}\rb-\lb (\bar{u} \cdot \nabla) \theta^{(2)},\bar{\tht}\rb\notag\\
	   &=I_{1}''+I_{2}''.
	\end{align}
Observe that since $\nabla \cdot u^{1}=0$, we have $I_{1}''=0$. 
$I_{2}''$ can be written in terms of a commutator just like in (\ref{E:skew-adjoint}). 
\begin{align}
I_{2}''&=-\frac{1}{2}\lb[A_{\ell},\bdy_{\ell}\tht^{(2)}]\bar{\tht},\bar{\tht}\rb.\notag
\end{align}
By \cref{lem:commutator2} with $\rho_1=\rho_2=\kap/2$, and Young's inequality, we obtain
\begin{align}\label{est:I2''}
     |I_{2}''|&\le C\Sob{\bdy_{\ell}\tht^{(2)}}{\Hdot^{\be-\frac{\kap}{2}}}\Sob{\bar{\tht}}{L^2}\Sob{\bar{\tht}}{\Hdot^{\frac{\kap}{2}}}\notag\\
     &\le C\Sob{\tht^{(2)}}{\Hdot^{\si_{c}+\frac{\kap}{2}}}^{2}\Sob{\bar{\tht}}{L^2}^{2}+\frac{\gam}{2}\Sob{\bar{\tht}}{\Hdot^{\frac{\kap}{2}}}^{2}.
\end{align}
From (\ref{balance:beta:uniq}) and (\ref{est:I2''}), we obtain
\begin{align*}
    \frac{d}{dt}\Sob{\bar{\tht}}{L^2}^2\le C\Sob{\tht^{(2)}}{\Hdot^{\si_{c}+\frac{\kap}{2}}}^{2}\Sob{\bar{\tht}}{L^2}^{2}.
\end{align*}
An application of the Gronwall inequality then establishes uniqueness.

\subsection{Gevrey regularity} Invoking the apriori estimate (\ref{est:ET:apriori}) for the approximating equation (\ref{E:approximation}), we obtain
 \begin{align}\label{est:ET:apriori:approx}
        \Sob{\tht^{n+1}(\cdotp)}{X_T}\le {C_5}\mathcal{I}_T(\tht_{0})+{C_6}\gam^{-1}\Sob{\tht^{n}(\cdotp)}{X_T}\Sob{\tht^{n+1}(\cdotp)}{X_T}.
    \end{align}
Assume that $C_{6}\gam^{-1}\Sob{\tht^{n}(\cdotp)}{X_T}\le 1/2$, where the $X_T$--norm is defined in \eqref{def:XT}. From (\ref{est:ET:apriori:approx}), we obtain
\begin{align}\notag
    \Sob{\tht^{n+1}(\cdotp)}{X_T}\leq 2C_{5}\mathcal{I}_T(\tht_{0}).
\end{align}
By \eqref{lim:XT:initialdata}, for arbitrary initial datum $\tht_0$, $T$ can be chosen sufficiently small such that $T\le T_0$ and
\begin{align}\label{est:ET:induction}
    2C_{6}C_{5}\mathcal{I}_T(\tht_{0})\le \gam/2.
\end{align}
This condition also holds if $\Sob{\tht_0}{\Hdot^{\si_c}}$ is small enough and $T=\infty$. By induction, and (\ref{est:ET:induction}), we have the uniform-in-$n$ bound on $\Sob{\tht^{n}(\cdotp)}{X_T}$ given by
\[C_{6}\gam^{-1}\Sob{\tht^{n}(\cdotp)}{X_T}\le1/2.\]
It is now straightforward to check that the solution $\tht$ will also satisfy the above bounds. 

\section{Existence, uniqueness, and smoothing for the endpoint case $\beta=2$: Proof of \cref{thm:main:log}}\label{sect:proofs:main:log}

In this section, we prove our second main result, \cref{thm:main:log}. This is carried out by proving apriori estimates for the norms $\Sob{\tht}{H^{\si}}$ and $\Sob{\tht}{\dot{G}^{\lam}_{\al, \si}}$. The existence of a solution then can be carried out by using a standard artificial viscosity approximation; we refer the reader to \cite{KumarThesis2021} for additional details.

\subsection{Existence}To simplify the treatment, we only consider the case when $\si \in (3-\kap,3)$ and $\al \in (\si-3+\kap,\kap)$. Let $\de_{1}$ be chosen such that
\begin{align*}
    0<\de_{1}<\min\left\{\mu,\frac{ \si-3+\kap}{2},\kap-\alpha\right\}.\\
\end{align*}
Denote by 
\[ \de_{2} =2(\si-3+\kap-\de_{1}).\]
For (\ref{E:dissipative-log}), velocity is given by $ u=-\nabla^{\perp}\ldmu \tht$. Since $\nabla \cdot u=0$, we have $	\frac{1}{2}\frac{d}{dt} \| \tht \|^{2}_{L^2}\le0$, so that
	\begin{align}\label{est:L2:log}
 \nrm{\tht(\cdot,t)}_{L^2}\le\nrm{\tht_{0}}_{L^2}.
	\end{align}
Define
 \begin{align*}
   \lam(t)=\lam t,\quad \lam>0.
    \end{align*}
    
{Upon applying the operator ${G}^{\lam(t)}_{\al}\Lam_j^{\si}$ to (\ref{E:dissipative-log}) and invoking \eqref{identity:dt:Gevrey} with $\ph=\Lam_j^{\si}\tht$, one has}
    \begin{align}\label{E:lambda-log}
    	\begin{split}
	     { \bdy_t({G}_\al^{\lam(t)}\Lam_j^{\si}\tht)-}{\lam'(t){G}_\al^{\lam(t)}\Lam^{{\si}+\al}_j\tht +{\gam} {G}^{\lam(t)}_\al\Lam_j^{{\si}+\kappa}\tht + {G}^{\lam(t)}_\al\Lam_j^{\si} (u \cdot \nabla \tht)=0}.
    	\end{split}
	\end{align}
{Then, taking the inner product in $L^2$ of \eqref{E:lambda-log} with ${G}_\al^{\lam(t)}\Lam_j^{\si}\tht$ yields}
	\begin{align}\label{balance:gevrey:basic1:log}
	   \frac{1}{2}\frac{d}{dt}\Sob{\Lam^{\si}\til{\tht}_j}{L^2}^2+{\gam} \Sob{\Lam^{{\si}+\kap/2}\til{\tht}_j}{L^2}^2
	   =\lam \Sob{\Lam^{{\si}+\al/2}\til{\tht}_j}{L^2}^2-\lb {G}_\al^{\lam(t)}\Lam_j^{\si} (u \cdot \nabla \tht),\Lam^{\si}\til{\tht}_j\rb.
	\end{align}
By interpolation inequality (\ref{E:interpolation}) and Young's inequality, we have 
\begin{align*}
    \lam\Sob{\Lam^{\si+\al/2}\tiltht_{j}}{L^2}^{2}&\le \lam\Sob{\Lam^{\si}\tiltht_{j}}{L^2}^{\frac{2(\kap-\al)}{\kap}}\Sob{\Lam^{\si+\kap/2}\tiltht_{j}}{L^2}^{\frac{2\al}{\kap}}\notag\\
    &\le \frac{\gam}{2}\Sob{\Lam^{\si+\kap/2}\tiltht_{j}}{L^2}^{2}+\frac{C}{\gam^{\frac{\al}{\kap-\al}}}\Sob{\Lam^{\si}\tiltht_{j}}{L^2}^{2}.
\end{align*}
Using this in (\ref{balance:gevrey:basic1:log}), we obtain
\begin{align*}
	   \frac{d}{dt}\Sob{\Lam^{\si}\til{\tht}_j}{L^2}^2+{\gam} \Sob{\Lam^{{\si}+\kap/2}\til{\tht}_j}{L^2}^2
	   \le C\Sob{\Lam^{{\si}}\til{\tht}_j}{L^2}^2-\lb {G}_\al^{\lam(t)}\Lam_j^{\si} (u \cdot \nabla \tht),\Lam^{\si}\til{\tht}_j\rb.
	\end{align*}
The nonlinear term is decomposed as follows:
	\begin{align*}
	\lb {G}_\al^{\lam(t)}\Lam_j^{\si} (u \cdot \nabla \tht),\Lam^{\si}\til{\tht}_j\rb=&\left \langle (\Lam^{\si}\til{u}_{j} \cdot \nabla \tht),\Lam^{\si} \tiltht_j \right \rangle+\left \langle (u\cdot {\nabla} \Lam^{\si} \tiltht_j),\Lam^{\si} \tiltht_j \right \rangle\notag\\
	&+\left \{ \left \langle {G}_\al^{\lam(t)}\Lam_j^{\si} (u \cdot \nabla \tht),\Lam^{\si}\til{\tht}_j\right \rangle- \left \langle (\Lam^{\si}\til{u}_{j} \cdot \nabla \tht),\Lam^{\si} \tiltht_j \right \rangle -\left \langle (u\cdot {\nabla} \Lam^{\si} \tiltht_j),\Lam^{\si} \tiltht_j \right \rangle \right\}\notag\\
	=&K_{1}+K_{2}+K_{3}.
\end{align*}
Since $\nabla\cdot u=0$, we have $K_{2}=0$. We obtain estimates for $K_1$ and $K_3$ below.
\subsubsection*{Bound for ${K_1}$\nopunct}:
	 Letting $A_\ell:=-\bdy_{\ell}^{\perp} \ldmu$, for $\ell=1,2$, we observe, as in \cite{ChaeConstantinCordobaGancedoWu2012}, that $A_\ell$ is a skew-adjoint operator, i.e., $\left \langle A_\ell h,g \right \rangle=-\left \langle h,A_\ell g \right \rangle$. In particular, we have
   	\begin{align}\label{skew:adj:commutator:log}
	\begin{split}
	\left \langle [A_\ell,g]h,h \right \rangle=-2\left \langle (A_\ell h)g,h \right \rangle.
	\end{split}
	\end{align}
    Using this we can express $K_{1}$ in terms of a commutator
	\begin{align*}
	K_{1}&=-\left \langle \nabla^{\perp}\ldmu \Lam^{\si}\tiltht_j\cdot \nabla \tht,\Lam^{\si}\til{\tht}_j \right \rangle\notag\\
	    &=\left \langle (A_{\ell}\Lam^{\si} \til{\tht}_j)\bdy_{\ell}\tht ,\Lam^{\si}\til{\tht}_j\right \rangle\notag\\
	    &=-\frac{1}2\left \langle [A_{\ell},\bdy_{\ell}\tht]\Lam^{\si}\tiltht_j,\Lam^{\si}\tiltht_j\right \rangle. 
\end{align*} 
 By \cref{lem:commutator4} with { $f=h=\Lam^{\si}\til{\tht}_j$, $g=\bdy_l\tht$, and ${\eps}=\kap/2-\de_{1}$, $ \de=\de_{1}$, $\rho=\de_{2}/2$}, it follows that
    \begin{align}\label{est:K1}
        |K_1|&\leq C\Sob{\tht}{{\Hdot}^{\si+\kap/2}}^{\frac{2}{2+\de_{2}}}\Sob{\tht}{{\Hdot}^{2-\kap/2+\de_{1}}}^{\frac{\de_{2}}{2+\de_{2}}}\Sob{\Lam^{\si}\til{\tht}_j}{L^2}\Sob{\Lam^{\si+\kap/2}\til{\tht}_j}{L^2}.
    \end{align}
\subsubsection*{{Bound for} ${K_3}$\nopunct}:
	By applying \eqref{split:Lam:rewrite} and the product rule, we have
	\begin{align*}
	\begin{split}
	K_{3}=&-\left \langle {G}_\al^{\lam(t)} \Lam_j^{\si-2}\bdy_l(\bdy_{l}A_{\ell}\tht\,\bdy_{\ell}\tht),\Lam^{\si} \tiltht_j  \right \rangle
	+ \left \langle  ({G}_\al^{\lam(t)}\Lam_j^{\si-2}\bdy_l(\bdy_{l}A_{\ell}\tht)) \bdy_{\ell} \tht,  \Lam^{\si}\tiltht_j \right \rangle \\
    	&-\left \langle {G}_\al^{\lam(t)} \Lam_j^{\si-2}\bdy_l(A_{\ell}\tht\,\bdy_{l}\bdy_{\ell} \theta),\Lam^{\si} \tiltht_j  \right \rangle
    	+\left \langle A_{\ell}\tht ({G}_\al^{\lam(t)}\Lam_j^{\si-2}\bdy_l(\bdy_{l}\bdy_{\ell} \tht)),\Lam^{\si} \tiltht_j \right \rangle  \notag\\
    	=&-\left\lb [{G}_\al^{\lam(t)} \Lam_j^{\si-2}\bdy_l,\bdy_\ell\tht]\bdy_{l}A_{\ell}\tht,\Lam^{\si}\tiltht_j\right\rb
    	-\left\lb [{G}_\al^{\lam(t)} \Lam_j^{\si-2}\bdy_l,A_{\ell}\tht]\bdy_l\bdy_\ell\tht,\Lam^{\si}\tiltht_j\right\rb\notag\\
    	=&K_3^a+K_3^b.
	\end{split}
	\end{align*}
Applying \cref{lem:commutator3} with {$\si$ replaced by $\si-2$, $\rho=0$, $\nu=(3-\si)/2$, $\zeta=1-\kap+\al+\de_{1}$ and $f=\bdy_{l}A_{\ell}\tht$, $g=\bdy_\ell\tht$, $h=\Lam^{\si}\tiltht_j$}, (\ref{E:log:inequality}) and Bernstein's inequality, we obtain
    \begin{align}\label{est:K3a:int}
         |K_3^a|\leq& c_{j}2^{\frac{(3-\si) j}{2}}\Sob{{G}^{\lam}_{\al}\bdy_{l}A_{\ell}\tht}{\Hdot^{\si-2}}\Sob{{G}^{\lam}_{\al}\bdy_\ell\tht}{\Hdot^{\frac{\si+1}{2}}}\Sob{\Lam^{\si}\tiltht_j}{L^2}\notag\\&+C\lam t2^{(\si-2+\kap-\de_{1})j}\Sob{{E}^{\lam}_{\al}\bdy_{\ell}\tht}{\Hdot^{2-\kap+\al+\de_{1}}}\Sob{\bdy_{l}A_{\ell}\tiltht_{j}}{L^2}\Sob{\Lam^{\si}\tiltht_j}{L^2}\notag\\
         \leq& b_{j}\Sob{{G}^{\lam}_{\al}\tht}{\Hdot^{\si+\de_1}}\Sob{{G}^{\lam}_{\al}\tht}{\Hdot^{\frac{\si+3}{2}}}^{2}+b_{j}\lam t\Sob{{E}^{\lam}_{\al}\tht}{\Hdot^{3-\kap+\al+\de_{1}}}\Sob{{G}^{\lam}_{\al}\tht}{\Hdot^{\si+\frac{\kap}{2}-\frac{\de_{1}}{2}}}\Sob{{G}^{\lam}_{\al}\tht}{\Hdot^{\si+\frac{\kap}{2}}},
    \end{align}
for some $\{b_j\}\in\ell^1(\ZZ)$.
By interpolation inequality (\ref{E:interpolation}), we have
\begin{align*}
    \Sob{{G}^{\lam}_{\al}\tht}{\Hdot^{\si+\de_1}}&\le \Sob{{G}^{\lam}_{\al}\tht}{\Hdot^{\si}}^{1-\frac{2\de_{1}}{\kap}}\Sob{{G}^{\lam}_{\al}\tht}{\Hdot^{\si+\frac{\kap}{2}}}^{\frac{2\de_{1}}{\kap}},\\
    \Sob{{G}^{\lam}_{\al}\tht}{\Hdot^\frac{\si+3}{2}}&\le \Sob{{G}^{\lam}_{\al}\tht}{\Hdot^{\si}}^{1-\frac{3-\si}{\kap}}\Sob{{G}^{\lam}_{\al}\tht}{\Hdot^{\si+\frac{\kap}{2}}}^{\frac{3-\si}{\kap}},\\
    \Sob{{G}^{\lam}_{\al}\tht}{\Hdot^{\si+\frac{\kap}{2}-\frac{\de_1}{2}}}&\le \Sob{{G}^{\lam}_{\al}\tht}{\Hdot^{\si}}^{\frac{\de_{1}}{\kap}}\Sob{{G}^{\lam}_{\al}\tht}{\Hdot^{\si+\frac{\kap}{2}}}^{1-\frac{\de_{1}}{\kap}}.
\end{align*}
Using an argument similar to (\ref{est:sobolev:de'}), we obtain
\[\Sob{{E}^{\lam}_{\al}\tht}{\Hdot^{3-\kap+\al+\de_{1}}}\le C(\lam t)^{-1+\de_{2}/2\al}\Sob{G^{\lam}_{\al}\tht}{\Hdot^{\si}}.\]
Using above bounds in (\ref{est:K3a:int}), we obtain the following estimate for $K_{3}^{a}$:
\begin{align}\label{est:K3a}
     |K_3^a| \leq& b_{j}\Sob{{G}^{\lam}_{\al}\tht}{\Hdot^{\si}}^{1+\frac{\de_{2}}{\kap}}\Sob{{G}^{\lam}_{\al}\tht}{\Hdot^{\si+\frac{\kap}{2}}}^{2-\frac{\de_{2}}{\kap}}+b_{j}(\lam t)^{\frac{\de_{2}}{2\al}}\Sob{{G}^{\lam}_{\al}\tht}{\Hdot^{\si}}^{1+\frac{\de_{1}}{\kap}}\Sob{{G}^{\lam}_{\al}\tht}{\Hdot^{\si+\frac{\kap}{2}}}^{2-\frac{\de_{1}}{\kap}},
\end{align}
for some $\{b_j\}\in\ell^1(\ZZ)$. Similarly, applying \cref{lem:commutator3} with $\si$ replaced by  $\si-2$, $\rho=0$, $\nu=(3-\si)/2$, but with $\zeta=1-\kap+\al+\de_{1}/2$ and $f=\bdy_l\bdy_\ell\tht$, $g=A_{\ell}{\tht}$, $h=\Lam^{\si}\tiltht_j$, (\ref{E:log:inequality}) and Bernstein's inequality, we obtain
\begin{align}\label{est:K3b}
         |K_3^b|\leq& c_{j}2^{\frac{(3-\si) j}{2}}\Sob{G^{\lam}_{\al}A_{\ell}{\tht}}{\Hdot^{\si-1}}\Sob{G^{\lam}_{\al}\bdy_l\bdy_\ell\tht}{\Hdot^{\frac{\si-1}{2}}}\Sob{\Lam^{\si}\tiltht_j}{L^2}\notag\\&+C\lam t 2^{(\si-2+\kap-\frac{\de_{1}}{2})j}\Sob{{E}^{\lam}_{\al}A_{\ell}\tht}{\Hdot^{2-\kap+\al+\frac{\de_{1}}{2}}}\Sob{\bdy_{l}\bdy_{\ell}\tiltht_{j}}{L^2}\Sob{\Lam^{\si}\tiltht_j}{L^2}\notag\\
         \leq&b_{j}\Sob{{G}^{\lam}_{\al}\tht}{\Hdot^{\si}}^{1+\frac{\de_{2}}{\kap}}\Sob{{G}^{\lam}_{\al}\tht}{\Hdot^{\si+\frac{\kap}{2}}}^{2-\frac{\de_{2}}{\kap}}+b_{j}(\lam t)^{\frac{\de_{2}}{2\al}}\Sob{{G}^{\lam}_{\al}\tht}{\Hdot^{\si}}^{1+\frac{\de_{1}}{\kap}}\Sob{{G}^{\lam}_{\al}\tht}{\Hdot^{\si+\frac{\kap}{2}}}^{2-\frac{\de_{1}}{\kap}},
    \end{align}
for some $\{b_j\}\in\ell^1(\ZZ)$.

Collecting the estimates for $K_1$ and $K_3$, we obtain
\begin{align*}
    {\frac{d}{dt}}\Sob{\Lam^{\si}\til{\tht}_j}{L^2}^2+{\gam} \Sob{\Lam^{{\si}+\kap/2}\til{\tht}_j}{L^2}^2
	   \le& C\Sob{\Lam^{{\si}}\til{\tht}_j}{L^2}^2+C\Sob{\tht}{{\Hdot}^{\si+\kap/2}}^{\frac{2}{2+\de_{2}}}\Sob{\tht}{{\Hdot}^{2-\kap/2+\de_{1}}}^{\frac{\de_{2}}{2+\de_{2}}}\Sob{\Lam^{\si}\til{\tht}_j}{L^2}\Sob{\Lam^{\si+\kap/2}\til{\tht}_j}{L^2}\notag\\&+c_{j}\Sob{{G}^{\lam}_{\al}\tht}{\Hdot^{\si}}^{1+\frac{\de_{2}}{\kap}}\Sob{{G}^{\lam}_{\al}\tht}{\Hdot^{\si+\frac{\kap}{2}}}^{2-\frac{\de_{2}}{\kap}}+c_{j}(\lam t)^{\frac{\de_{2}}{2\al}}\Sob{{G}^{\lam}_{\al}\tht}{\Hdot^{\si}}^{1+\frac{\de_{1}}{\kap}}\Sob{{G}^{\lam}_{\al}\tht}{\Hdot^{\si+\frac{\kap}{2}}}^{2-\frac{\de_{1}}{\kap}}.
\end{align*}
Summing in $j$ and applying the Cauchy-Schwarz inequality followed by Young's inequality yields
\begin{align}\label{balance:gevrey:summed:log}
     {\frac{d}{dt}}\Sob{{G}^{\lam}_{\al}\tht}{\Hdot^{\si}}^2+{\gam} \Sob{{G}^{\lam}_{\al}\tht}{\Hdot^{\si+\kap/2}}^2
	   \le& C\Sob{{G}^{\lam}_{\al}\tht}{\Hdot^{\si}}^2+C\Sob{\tht}{{\Hdot}^{2-\kap/2+\de_{1}}}^{\frac{\de_{2}}{2+\de_{2}}}\Sob{{G}^{\lam}_{\al}\tht}{\Hdot^{\si}}\Sob{{G}^{\lam}_{\al}\tht}{{\Hdot}^{\si+\kap/2}}^{\frac{4+\de_{2}}{2+\de_{2}}}\notag\\
	   &+C\Sob{{G}^{\lam}_{\al}\tht}{\Hdot^{\si}}^{1+\frac{\de_{2}}{\kap}}\Sob{{G}^{\lam}_{\al}\tht}{\Hdot^{\si+\frac{\kap}{2}}}^{2-\frac{\de_{2}}{\kap}}+C(\lam t)^{\frac{\de_{2}}{2\al}}\Sob{{G}^{\lam}_{\al}\tht}{\Hdot^{\si}}^{1+\frac{\de_{1}}{\kap}}\Sob{{G}^{\lam}_{\al}\tht}{\Hdot^{\si+\frac{\kap}{2}}}^{2-\frac{\de_{1}}{\kap}}\notag\\
	   \le& C\Sob{{G}^{\lam}_{\al}\tht}{\Hdot^{\si}}^2+C\Sob{\tht}{{\Hdot}^{2-\kap/2+\de_{1}}}^{2}\Sob{{G}^{\lam}_{\al}\tht}{\Hdot^{\si}}^{2+\til{\si}_{1}}+\frac{\gam}{2}\Sob{{G}^{\lam}_{\al}\tht}{\Hdot^{\si+\kap/2}}^{2}\notag\\
	   &+C\Sob{{G}^{\lam}_{\al}\tht}{\Hdot^{\si}}^{2+\til{\si}_{2}}+C(\lam t)^{\frac{\kap\de_{2}}{\al\de_{1}}}\Sob{{G}^{\lam}_{\al}\tht}{\Hdot^{\si}}^{2+\til{\si}_{3}},
\end{align}
where
\begin{align*}
    \til{\si}_{1}=\frac{4}{\de_{2}},\quad
    \til{\si}_{2}=\frac{2\kap}{\de_{2}}, \quad
    \til{\si}_{3}=\frac{2\kap}{\de_{1}}.
\end{align*}
In order to get an apriori estimate for $\Sob{\tht}{H^{\si}}$, we recall (\ref{est:L2:log}) and suppress the Gevrey multiplier in (\ref{balance:gevrey:summed:log}) by taking $\lam=0$. We obtain
\begin{align}\label{balance:gevrey:intermediate:log:H}
    &\frac{d}{dt}\Sob{\tht}{H^{\si}}^2 \le C\left(\Sob{\tht}{{H}^{\si}}^{2}+\Sob{ \tht}{H^{\si}}^{4+\til{\si}_1}+\Sob{\tht}{H^{\si}}^{2+\til{\si}_2}\right).
\end{align}
Denote by \[y(t)=1+\Sob{\tht}{H^{\si}}^2, \] then from (\ref{balance:gevrey:intermediate:log:H}), we obtain
\begin{align*}
     \frac{dy}{dt}\le Cy^{\til{\si}_4},
\end{align*}
where
\[\til{\si}_4=\max\left\{2+\frac{2}{\de_{2}},1+\frac{\kap}{\de_{2}}\right\}.\]
We conclude that there exists a time $T_1=T_{1}(\Sob{\tht_0}{H^{\si}})$ such that $\tht(x,t)$ satisfies
\begin{align*}
    \Sob{\tht}{L^{\infty}_{T_{1}}H^{\si}}\le C(1+\Sob{\tht_{0}}{H^{\si}}).
\end{align*}
The existence of a solution $\tht(x,t)$ now follows, for instance, from a standard argument via artificial viscosity.

Applying the interpolation inequality (\ref{E:interpolation}) in (\ref{balance:gevrey:summed:log}), we obtain
\begin{align}\label{balance:gevrey:log:final}
  {\frac{d}{dt}}&\Sob{{G}^{\lam}_{\al}\tht}{\Hdot^{\si}}^2
	   \le C\Sob{{G}^{\lam}_{\al}\tht}{\Hdot^{\si}}^2+C\Sob{\tht}{{L^2}}^{1-\til{\si}_{5}}\Sob{{G}^{\lam}_{\al}\tht}{\Hdot^{\si}}^{3+\til{\si}_{1}+\til{\si}_{5}}+C\Sob{{G}^{\lam}_{\al}\tht}{\Hdot^{\si}}^{2+\til{\si}_{2}}+C(\lam t)^{\frac{\kap\de_{2}}{\al\de_{1}}}\Sob{{G}^{\lam}_{\al}\tht}{\Hdot^{\si}}^{2+\til{\si}_{3}},
\end{align}
where
\[\til{\si}_{5}=\frac{2\de_{1}-\de_{2}+2}{2\si}.\]
Denote by \[z(t)=1+\Sob{{G}^{\lam(t)}_{\al}\tht}{\Hdot^{\si}}^2, \] then (\ref{balance:gevrey:log:final}) implies
\begin{align*}
     \frac{dz}{dt}\le C(t,\Sob{\tht_{0}}{L^{2}})z^{\til{\si}_{6}},
\end{align*}
where
\[\til{\si}_{6}=\max\left\{\frac{3}{2}+\frac{2}{\de_{2}}+\frac{2\de_{1}-\de_{2}+2}{4\si},1+\frac{\kap}{\de_{2}},1+\frac{\kap}{\de_{1}}\right\}.\]
From the above inequality, we conclude that there exists a time $T=T(\Sob{\tht_0}{H^{\si}})\le T_1$ such that $\tht(x,t)$ satisfies 
\begin{align}\notag
    \esssup_{0\le t\le T} \Sob{\tht(t)}{\Gdot^{\lam t}_{\al, \si}}\le C(1+\Sob{\tht_{0}}{H^{\si}}).
\end{align}
\subsection{Uniqueness}

To establish that the solution obtained above is unique, we consider two solutions of (\ref{E:dissipative-log}), denoted by $\tht^{(1)}$ and $\tht^{(2)}$. Let $\bar{\tht}=\tht^{(1)}-\tht^{(2)}, \, \bar{u}=u^{(1)}-u^{(2)}$. Then, $\bar{\tht}$ satisfies the following equation:
\begin{align}
    \label{E:dissipative-log:diff}
	\begin{split}
	{\partial_{t}\bar{\theta} +{\gam}\Lam^{\kap}\bar{\tht}+ u^{(1)} \cdot \nabla \bar{\theta}+\bar{u} \cdot \nabla \theta^{(2)}=0.}
	\end{split}
\end{align}
{Taking the inner product in $L^2$ of (\ref{E:dissipative-log:diff}) with $\bar{\tht}$ yields}
\begin{align}\label{balance:log:diff}
	   \frac{1}{2}\frac{d}{dt}\Sob{\bar{\tht}}{L^2}^2+{\gam} \Sob{\Lam^{\frac{\kap}{2}}\bar{\tht}}{L^2}^2
	   &=-\lb (u^{(1)} \cdot \nabla) \bar{\theta},\bar{\tht}\rb-\lb (\bar{u} \cdot \nabla) \theta^{(2)},\bar{\tht}\rb\notag\\
	   &=K_{1}'+K_{2}'.
	\end{align}
Observe that since $\nabla \cdot u^{1}=0$, we have $K_{1}'=0$. 
Using (\ref{skew:adj:commutator:log}), we can express $K_{2}'$ in terms of a commutator
\begin{align}
K_{2}'&=-\frac{1}{2}\lb[A_{\ell},\bdy_{\ell}\tht^{(2)}]\bar{\tht},\bar{\tht}\rb.\notag
\end{align}
By \cref{lem:commutator4} with { $f=h=\bar{\tht}$, $g=\bdy_l\tht^{(2)}$, and ${\eps}=\kap/2-\de_{1}$, $ \de=\de_{1}$, $\rho=\de_{2}/2$}, it follows that
\begin{align}\label{est:K2'}
 |K_{2}'|&\le C\Sob{\bdy_{\ell}\tht^{(2)}}{{\Hdot}^{\si+\kap/2-1}}^{\frac{2}{2+\de_{2}}}\Sob{\bdy_{\ell}\tht^{(2)}}{{\Hdot}^{1-\kap/2+\de_{1}}}^{\frac{\de_{2}}{2+\de_{2}}}\Sob{\bar{\tht}}{L^2}\Sob{\bar{\tht}}{\Hdot^{\kap/2}}\notag\\
 &\le C\Sob{\tht^{(2)}}{{\Hdot}^{\si+\kap/2}}^{\frac{4}{2+\de_{2}}}\Sob{\tht^{(2)}}{{H}^{\si}}^{\frac{2\de_{2}}{2+\de_{2}}}\Sob{\bar{\tht}}{L^2}^{2}+\frac{\gam}{2}\Sob{\bar{\tht}}{\Hdot^{\kap/2}}^{2}.
    \end{align}
From (\ref{balance:log:diff}) and (\ref{est:K2'}), we obtain
\begin{align*}
    \frac{d}{dt}\Sob{\bar{\tht}}{L^2}^2\le C\Sob{\tht^{(2)}}{{\Hdot}^{\si+\kap/2}}^{\frac{4}{2+\de_{2}}}\Sob{\tht^{(2)}}{{H}^{\si}}^{\frac{2\de_{2}}{2+\de_{2}}}\Sob{\bar{\tht}}{L^2}^{2}.
\end{align*}
An application of the Gronwall inequality and Holder's inequality then establishes uniqueness.

\appendix
\section{}\label{sect:app}
\subsection*{Proof of Lemma \ref{lem:commutator1}}
		By Bony paraproduct formula
		\begin{align}\notag
		   \mathcal{L}_\si(f,g,h)=\ L_{1}+L_{2}+L_{3},
		\end{align}
		where
		\begin{align*}
		&L_{1}=\sum_{k} \iint\Ax^{\si}{\chi_{k-3}(\xi-\eta)\hat{f}(\xi-\eta) \phi_k(\eta)\hat{g}(\eta)\overline{\hat{h}(\xi)}} d\eta d\xi , \\
		&L_{2}=\sum_{k}\iint\Ax^{\si}{\phi_k(\xi-\eta)\hat{f}(\xi-\eta) \chi_{k-3}(\eta)\hat{g}(\eta)\overline{\hat{h}(\xi)}} d\eta d\xi,  \\  \quad
		&L_{3}=\sum_{k}\iint\Ax^{\si}{\phi_k(\xi-\eta)\hat{f}(\xi-\eta) \widetilde{\phi}_k(\eta)\hat{g}(\eta)\overline{\hat{h}(\xi)}} d\eta d\xi ,
		\end{align*}
		and
		\[\til{\phi}_{k}\mathcal{F}(f)=\sum_{\abs{i-k}\le3}\mathcal{F}({\triangle_{i}}f).\]
		Observe that by the triangle inequality
		    \begin{align*}
		      |\mathcal{L}_\si(f,g,h)|\leq\mathcal{L}_\si(|f|,|g|,|h|).
		    \end{align*}
		We will treat the cases $L_1, L_2$ and $L_3$ separately.
		\subsubsection*{{Estimating} $L_1$\nopunct}:
		{The localizations present in this case imply}
		\begin{align*}
		\xe \in {\Bcal}_{k-3}, \quad \eta \in {\Acal}_k.
		\end{align*}
		Observe that $\chi_{k}(\xe)\phi_{k}(\eta)=0$, for all $\abs{k-j}\geq3$, whenever $\xi\in{\Acal}_j$.  Thus for $\si \in \RR$
		\begin{align}\notag
		|L_{1}| \le C\sum_{\abs{k-j}\le 2}
	\iint\Ae^{\si}\chi_{k-3}(\xi-\eta)|\hat{f}(\xi-\eta)| \phi_{k}(\eta)|\hat{g}(\eta)||\hat{h}(\xi)|d\eta d\xi.
		\end{align}
	    {By} the Cauchy-Schwarz inequality,  Young's convolution inequality, and Plancherel's theorem,
		\begin{align}
		{|L_{1}|}
		&{\le C\sum_{\abs{k-j}\le 2}\nrm{\chi_{k-3}\hat{f}}_{L^{1}}\nrm{{\lpk}g}_{\Hdot^{\si}}\nrm{h}_{L^{2}}\label{E:E1}}.
		\end{align}

		{Suppose  $\eps>0$. Since $f\in L^2$, the Cauchy-Schwarz inequality and Plancherel's theorem implies}
		\begin{align*}
		\nrm{\chi_{k-3}\hat{f}}_{L^{1}}\le \nrm{\mathbbm{1}_{\Ba}|\cdotp|^{\epsilon-1}}_{L^2}\nrm{|\cdotp|^{1-\epsilon}\chi_{k-3}\hat{f}}_{L^2}
		\le \frac{C}{\eps^{1/2}} 2^{\epsilon k}\nrm{{S_{k-3}}f}_{\Hdot^{1-\epsilon}}.
		\end{align*}
		{Upon returning to (\ref{E:E1}), it follows by the Cauchy-Schwarz inequality in $l^{2}$ and Bernstein's inequality that}
		\begin{align}\label{E:L1-estimate}
		\abs{L_{1}}&\le C2^{\eps j}\nrm{{S_{j-1}}f}_{\Hdot^{1-\epsilon}}\sum_{\abs{k-j}\le 2}2^{\si k}\nrm{\triangle_{k}g}_{L^2}\nrm{h}_{L^{2}} \notag\\&\le Ca_{j}2^{\epsilon j}\nrm{{S_{j-1}}f}_{\Hdot^{1-\epsilon}}\nrm{g}_{\Hdot^{\si}}\nrm{h}_{L^{2}},
		\end{align}
	    where
		    \begin{align*}
		    a_{j}({\si}):=\frac{\left(\sum_{\abs{k-j}\le 2}2^{2k{\si}}\nrm{{\lpk}g}_{L^2}^2\right)^{1/2}}{\nrm{g}_{\Hdot^{\si}}}.
		    \end{align*}

		\subsubsection*{{Estimating} $L_2$ \nopunct}:
		{The localizations in this case imply}
		\begin{align*}
		\eta \in {\Bcal}_{k-3}, \quad \xe \in {\Acal}_{k},
		\end{align*}
		Thus
		\begin{align*}
		|L_{2}| \le C\sum_{\abs{k-j}\le 2}
		\iint\abs{\xi-\eta}^{\si}\phi_k(\xi-\eta)|\hat{f}(\xi-\eta)|\chi_{k-3}(\eta)|\hat{g}(\eta)||\hat{h}(\xi)|\, d\eta\, d\xi. 
		\end{align*}
		{Suppose $\si>-1$, then as in (\ref{E:E1}),  the Cauchy-Schwarz inequality and Young's convolution inequality imply}
		\begin{align}\label{E:L2s}
		\abs{L_{2}}\le C\sum_{k}\nrm{\chi_{k-3}\hat{g}}_{L^{\frac{4}{3+{\si}}}}\nrm{\abs{\cdotp}^{\si}\phi_k\hat{f}}_{L^{\frac{4}{3-{\si}}}}\nrm{h}_{L^{2}}.
		\end{align}
		
		Suppose $\si <1$ and $\eps\in\RR$, by H\"older's inequality, we have
		\begin{align*}
		\nrm{ \chi_{k-3}\hat{g}}_{L^{\frac{4}{3+\si}}}
		    &\le \nrm{\mathbbm{1}_{\Ba}|\cdotp|^{-\si}}_{L^{\frac{4}{{\si}+1}}}\nrm{|\cdotp|^{\si}\chi_{k-3}\hat{g}}_{L^{2}}
	    	\le C2^{k(\frac{1}{2}-\frac{\si}{2})}\nrm{{S_{k-3}g}}_{\Hdot^{\si}}\\
		\nrm{\abs{\cdotp}^{\si}\phi_k\hat{f}}_{L^{\frac{4}{3-\si}}}
	    	&\le \nrm{\mathbbm{1}_{\An}|\cdotp|^{\epsilon+{\si}-1}}_{L^{\frac{4}{1-\si}}}\nrm{|\cdotp|^{1-\epsilon}\phi_k\hat{f}}_{L^{2}}
		    \le C2^{k(\epsilon+\frac{\si}{2}-\frac{1}{2})}\nrm{{\lpk}f}_{\Hdot^{1-\epsilon}}. 
		\end{align*}
		{Upon returning to (\ref{E:L2s}), and proceeding as for (\ref{E:L1-estimate}), one obtains}
		\begin{align}\label{E:L2-estimate}
		\abs{L_{2}}\le Cb_{j}2^{\epsilon j}\nrm{f}_{\Hdot^{1-\epsilon}}\nrm{{S_{j-1}g}}_{\Hdot^{\si}}\nrm{h}_{L^{2}},
		\end{align}
		where 
		\[
		b_{j}(\epsilon):=\frac{\left(\sum_{\abs{k-j}\le 2}2^{2k(1-\eps)}\nrm{{\lpk}f}_{L^2}^2\right)^{1/2}}{\nrm{f}_{\Hdot^{1-\epsilon}}}.
		\]
		\subsubsection*{Estimating $L_3$  \nopunct}:	
		{Since $\phi_{k}(\xe)\sum_{\abs{k-l}\le 3}\phi_{l}(\eta)=0$, for all  $j \ge k+6$, whenever $\xi\in{\Acal}_j$, the summation only occurs over the range $k\ge j-5$. The localizations in this case imply}
		\begin{align*}
		\xe \in {\Acal}_{k}, \quad \eta \in {\Acal}_{k-4,k+4},
		\end{align*}
	so that for ${\si} <1$ and $\eps<\si+1$, we have 
\begin{align*}
    \Ax^{\frac{{\si}+1-\eps}{2}}\le C\Axe^{1-\frac{\eps}{2}}\Ae^{\frac{{\si}-1}{2}}.
\end{align*}
This gives us
\begin{align*}
		{|L_{3}| \le C\sum_{k\ge j-5}
		\iint|\xi-\eta|^{1-\frac{\eps}{2}}\phi_k(\xi-\eta)|\hat{f}(\xi-\eta)||\eta|^{\frac{\si}{2}-1} \tilde{\phi}_k(\eta)|\hat{g}(\eta)|\Ax^{\frac{\eps+{\si}-1}{2}}|\hat{h}(\xi)| d\eta d\xi.}
		\end{align*}
The Cauchy-Schwarz inequality, Young's convolution inequality, and Bernstein's inequality imply
		\begin{align}\label{E:L3-estimate}
		{|L_{3}|\le}  & {C\sum_{k \ge j-5} \nrm{|\cdotp|^{1-\frac{\eps}{2}}\phi_k\hat{f}}_{L^2}\nrm{|\cdotp|^{\frac{\si-1}{2}}\tilde{\phi}_k\hat{g}}_{L^2}\nrm{|\cdotp|^{\frac{\eps+{\si}-1}{2}}\hat{h}}_{L^{1}}}\notag \\
		    \le & C\sum_{k \ge j-5}2^{\frac{\epsilon}{2} k}\nrm{{\lpk}\Lam^{1-\eps}f}_{L^2}2^{-(\frac{{\si}+1}{2})k}\nrm{{\tlpk}\Lam^{\si}g}_{L^2}2^{(\frac{\eps+{\si}+1}{2})j}\nrm{h}_{L^{2}} \notag\\
		    \le & Cc_{j}2^{\epsilon j}\nrm{f}_{\Hdot^{1-\epsilon}}\nrm{g}_{\Hdot^{\si}}\nrm{h}_{L^{2}},
		\end{align}
		where
		\[
		{c_{j}({\si},\epsilon)}=\frac{\left(\sum_{k\ge j-5}2^{-2(\frac{{\si}+1-\eps}{2})(k-j)} \nrm{{\lpk}\Lam^{\si}g}_{L^2}^2\nrm{{\lpk}\Lam^{1-\eps}f}_{L^2}^2\right)^{1/2}}{\nrm{g}_{\Hdot^{\si}}\nrm{f}_{\Hdot^{1-\epsilon}}}. 
		\]
Combining \eqref{E:L1-estimate}, \eqref{E:L2-estimate}, \eqref{E:L3-estimate} and the fact that $\mathcal{L}_{\si}(f,g,h)=\mathcal{L}_{\si}(g,f,h)$. completes the proof.
\qed
\subsection*{Proof of Lemma \ref{lem:commutator2a}}

By Plancherel's theorem, we have
\begin{align}\label{E:def:L:functionala}
   \mathcal{L}:= \lb [\triangle_j,g]f ,h\rb=\iint m(\xi,\eta)\hat{g}(\eta)\hat{f}(\xi-\eta)\overline{\hat{h}(\xi)}d\eta d\xi,
\end{align}
where
 \begin{align}\notag
        m(\xi,\eta):=\phi_{j}(\xi)-\phi_{j}(\xe).
    \end{align}
Observe that by the mean value theorem
    \begin{align}\notag
    |m(\xi,\eta)|\le \Ae 2^{-j}\Sob{\nabla \phi_{0}}{L^{\infty}}.
    \end{align}
Using this in (\ref{E:def:L:functionala}) and the fact that $\supp\hat{h}\subset\Acal_j$, we obtain
\begin{align*}
 \abs{\mathcal{L}}\le &C2^{-(1+\si)j}\iint|\xi|^{\si}|\hat{ f}(\xi-\eta)||\widehat{\Lam g}(\eta)||\hat{ h}(\xi)|d\eta d\xi,
\end{align*}
for any $\si\in\RR$. For $\si\in(-1,1)$, $\eps\in(0,2)$ such that $\si>\eps-1$, application of \cref{lem:commutator1} gives us 
\begin{align}\notag
\mathcal{L}\leq Cc_j2^{-(1+\si-\eps) j}\min\left\{\nrm{f}_{\Hdot^{1-\epsilon}}\nrm{\Lam g}_{\Hdot^{\si}},\nrm{\Lam g}_{\Hdot^{1-\epsilon}}\nrm{f}_{\Hdot^{\si}}\right\}\nrm{h}_{L^{2}}.
\end{align}
We set $\rho_1=\eps$ and $\rho_2=\si$ to complete the proof.
\qed

\section{Proof of Theorem \ref{T:transport}}\label{sect:app:thm}

For $\epsilon>0$, we consider the following artificial viscosity regularization of (\ref{E:mod:claw}): 
\begin{align}\label{E:forced-transport:reg}
{\partial_{t}\theta-\epsilon\De \tht+ \Div F_{q}(\tht)=-{\gam} \Lam^{\kap}\tht.}    
\end{align}
For $0\le t\le T$, define
\begin{align*}
    &{F}_{1}(\tht):=\gam\int^{t}_{0}e^{\epsilon\De(t-s)} \Lam^{\kap}\tht(s)\, ds,\\
    &{F}_{2}(\tht;q):=\int^{t}_{0}e^{\epsilon\De(t-s)}\Div F_{q}(\tht)\,ds.
\end{align*}

We have
\begin{align*}
   \Sob{F_1(\tht)(t)}{H^{\si_{c}}}&\le \frac{C}{\epsilon^{\frac{\kap}{2}}}\int_{0}^{t}\frac{1}{(t-s)^{\frac{\kap}{2}}}\Sob{\tht(s)}{H^{\si_{c}}}\,ds\\
   &\le \frac{CT^{1-\frac{\kap}{2}}}{\epsilon^{\frac{\kap}{2}}}\Sob{\tht}{L^{\infty}_{T}H^{\si_{c}}}.
\end{align*}

To estimate $\Sob{{F}_{2}(\tht;q)}{H^{\si_c}}$, we consider the two cases $\beta<1+\kap $ and $\beta\ge 1+\kap$ separately.
\subsubsection*{\textbf{Case: ${\be\ge 1+\kap}$} } By \cref{lem:commutator2} with $\rho_{1}=\kap$ and $\rho_{2}=\be/2$, $A_{\ell}=\Lam^{\be-2}\bdy_{\ell}^{\perp}$, proceeding as in (\ref{est:sobolev:de'}), we obtain
\begin{align*}
    \Sob{F_{2}(\tht;q)(t)}{\Hdot^{\si_c}}&\le \frac{C}{\epsilon^{\frac{\be+2}{4}}}\int_{0}^{t}\frac{1}{(t-s)^{\frac{\be+2}{4}}}\Sob{[A_{\ell},\bdy_{\ell}\tht]{q}}{\Hdot^{\frac{\be}{2}-\kap}}\,ds\\ &\le \frac{C}{\epsilon^{\frac{\be+2}{4}}}T^{\frac{2-\be}{4}}\Sob{\tht}{L^{\infty}_{T}\Hdot^{\si_{c}}}\Sob{{q}}{L^{\infty}_{T}\Hdot^{\frac{\be}{2}}}.
\end{align*}
Similarly, by \cref{lem:commutator2} with $\rho_1=\rho_2=\kap$, we have
\begin{align*}
    \Sob{F_{2}(\tht;q)(t)}{L^2} \le CT\Sob{\tht}{L^{\infty}_{T}\Hdot^{\si_{c}}}\Sob{{q}}{L^{\infty}_{T}\Hdot^{\kap}}.
\end{align*}
\subsubsection*{\textbf{Case: ${\be< 1+\kap}$} } In this case, we have
\begin{align*}
    \Sob{F_{2}(\tht;q)(t)}{H^{\si_c}}&\le \int_{0}^{t}\left\{1+\frac{C}{\epsilon^{\frac{\si_{c}}{2}}}\frac{1}{(t-s)^{\frac{\si_{c}}{2}}}\right\}\Sob{A_{\ell}{q}\,\bdy_{\ell}\tht}{L^2}\,ds\\& \le C\left(T+\frac{T^{\frac{2-\si_c}{2}}}{\epsilon^{\frac{\si_c}{2}}}\right)\Sob{\tht}{L^{\infty}_{T}\Hdot^{\si_{c}}}\Sob{{q}}{L^{\infty}_{T}\Hdot^{\kap}}.
\end{align*}
Using Picard's theorem (cf. \cite{Lemarie-Rieusset2002}), there exists a unique solution $\tht^{\epsilon}$ to (\ref{E:forced-transport:reg}) such that $\tht^{\epsilon}\in L^{\infty}_{T^{\epsilon}}H^{\si_c}$ for some time $T^{\epsilon}>0$. Owing to the uniform estimate in (\ref{est:sobolev:summary}), we can conclude that
\begin{align*}
    T^{\epsilon}=T,\quad \text{for all}\ \epsilon>0.
\end{align*}
Using similar methods as above, it is easy to see that $\Sob{\partial_{t}\theta^{\epsilon}}{L^{\infty}_{T}H^{\si_{c}-2}}$ is bounded uniformly in $\epsilon$. Application of Aubin-Lions theorem (cf. \cite{ConstantinFoiasBook1988}) guarantees the existence of a weak limit $\tht$ in $L^{\infty}_{T}H^{\si_c}$. It is then straightforward to show that $\tht$ is a weak solution of (\ref{E:mod:claw}). This completes the proof. 
\qed


\begin{thebibliography}{CCC{\etalchar{+}}12}

\bibitem[BB15]{BaeBiswas2015}
H.~Bae and A.~Biswas, \emph{Gevrey regularity for a class of dissipative
  equations with analytic nonlinearity}, Methods Appl. Anal. \textbf{22}
  (2015), no.~4, 377--408. \MR{3457535}

\bibitem[BCD11]{BahouriCheminDanchinBook2011}
H.~Bahouri, J.-Y. Chemin, and R.~Danchin, \emph{Fourier analysis and nonlinear
  partial differential equations}, Grundlehren der Mathematischen
  Wissenschaften [Fundamental Principles of Mathematical Sciences], vol. 343,
  Springer, Heidelberg, 2011. \MR{2768550}

\bibitem[Bis12]{Biswas2012}
A.~Biswas, \emph{Gevrey regularity for a class of dissipative equations with
  applications to decay}, J. Differential Equations \textbf{253} (2012),
  no.~10, 2739--2764. \MR{2964642}

\bibitem[Bis14]{Biswas2014}
\bysame, \emph{Gevrey regularity for the supercritical quasi-geostrophic
  equation}, J. Differential Equations \textbf{257} (2014), no.~1, 1753--1772.

\bibitem[BL15]{BourgainLi2015}
J.~Bourgain and D.~Li, \emph{Strong ill-posedness of the incompressible {E}uler
  equation in borderline {S}obolev spaces}, Invent. Math. \textbf{201} (2015),
  no.~1, 97--157. \MR{3359050}

\bibitem[BL19]{BourgainLi2019}
\bysame, \emph{Galilean boost and non-uniform continuity for incompressible
  {E}uler}, Comm. Math. Phys. \textbf{372} (2019), no.~1, 261--280.
  \MR{4031801}

\bibitem[BMS15]{BiswasMartinezSilva2015}
A.~Biswas, V.R. Martinez, and P.~Silva, \emph{On {G}evrey regularity of the
  supercritical {S}{Q}{G} equation in critical {B}esov spaces}, J. Funct. Anal.
  \textbf{269} (2015), no.~10, 3083--3119.

\bibitem[Bou93a]{Bourgain1993a}
J.~Bourgain, \emph{Fourier transform restriction phenomena for certain lattice
  subsets and applications to nonlinear evolution equations. {I}.
  {S}chr\"{o}dinger equations}, Geom. Funct. Anal. \textbf{3} (1993), no.~2,
  107--156. \MR{1209299}

\bibitem[Bou93b]{Bourgain1993b}
\bysame, \emph{Fourier transform restriction phenomena for certain lattice
  subsets and applications to nonlinear evolution equations. {II}. {T}he
  {K}d{V}-equation}, Geom. Funct. Anal. \textbf{3} (1993), no.~3, 209--262.
  \MR{1215780}

\bibitem[CCC{\etalchar{+}}12]{ChaeConstantinCordobaGancedoWu2012}
D.~Chae, P.~Constantin, D.~C\'{o}rdoba, F.~Gancedo, and J.~Wu,
  \emph{Generalized surface quasi-geostrophic equations with singular
  velocities}, Comm. Pure Appl. Math. \textbf{65} (2012), no.~8, 1037--1066.
  \MR{2928091}

\bibitem[CCW12]{ChaeConstantinWu2012}
D.~Chae, P.~Constantin, and J.~Wu, \emph{Dissipative models generalizing the
  2{D} {N}avier-{S}tokes and surface quasi-geostrophic equations}, Indiana
  Univ. Math. J. \textbf{61} (2012), no.~5, 1997--2018. \MR{3119608}

\bibitem[CF88]{ConstantinFoiasBook1988}
P.~Constantin and C.~Foias, \emph{{N}avier-{S}tokes equations}, Chicago
  Lectures in Mathematics, University of Chicago Press, Chicago, IL, 1988.
  \MR{972259 (90b:35190)}

\bibitem[CGZ20]{CheminGallagherZhang2020}
J.-Y. Chemin, I.~Gallagher, and P.~Zhang, \emph{On the radius of analyticity of
  solutions to semi-linear parabolic systems}, preprint.

\bibitem[Che95]{CheminBook1995}
J.-Y. Chemin, \emph{Fluides parfaits incompressibles}, Ast\'{e}risque (1995),
  no.~230, 177. \MR{1340046}

\bibitem[CIW08]{ConstantinIyerWu2008}
P.~Constantin, G.~Iyer, and J.~Wu, \emph{Global regularity for a modified
  critical dissipative quasi-geostrophic equation}, Indiana Univ. Math. J.
  \textbf{57} (2008), no.~6, 2681--2692. \MR{2482996}

\bibitem[CL03]{ChaeLee2003}
D.~Chae and J.~Lee, \emph{Global well-posedness in the super-critical
  dissipative quasi-geostrophic equations}, Comm. Math. Phys. \textbf{233}
  (2003), no.~2, 297--311. \MR{1962043}

\bibitem[CMT94]{ConstantinMajdaTabak1994}
P.~Constantin, A.J. Majda, and E.~Tabak, \emph{Formation of strong fronts in
  the 2d quasigeostrophic thermal active scalar}, Nonlinearity \textbf{7}
  (1994), 1495--1533.

\bibitem[CMZ07]{ChenMiaoZhang2007}
Q.~Chen, C.~Miao, and Z.~Zhang, \emph{A new {B}ernstein's inequality and the
  2{D} dissipative quasi-geostrophic equation}, Comm. Math. Phys. \textbf{271}
  (2007), no.~3, 821--838. \MR{2291797}

\bibitem[Cor98]{Cordoba1998}
D.~Cordoba, \emph{Nonexistence of simple hyperbolic blow-up for the
  quasi-geostrophic equation}, Ann. of Math. (2) \textbf{148} (1998), no.~3,
  1135--1152. \MR{1670077}

\bibitem[CTV15]{ConstantinTarfuleaVicol2015}
P.~Constantin, A.~Tarfulea, and V.~Vicol, \emph{Long time dynamics of forced
  critical sqg}, Commun. Math. Phys. \textbf{335} (2015), no.~1, 93--141.

\bibitem[CV10]{CaffarelliVasseur2010}
L.~Caffarelli and A.~Vasseur, \emph{Drift diffusion equations with fractional
  diffusion and the quasi-geostrophic equation}, Ann. of Math. \textbf{171}
  (2010), no.~3, 1903--1930.

\bibitem[CV12]{ConstantinVicol2012}
P.~Constantin and V.~Vicol, \emph{Nonlinear maximum principles for dissipative
  linear nonlocal operators and applications}, Geom. Funct. Anal. \textbf{22}
  (2012), 1289--1321.

\bibitem[CW99]{ConstantinWu1999}
P.~Constantin and J.~Wu, \emph{Behavior of solutions of 2{D} quasi-geostrophic
  equations}, SIAM J. Math. Anal. \textbf{30} (1999), no.~5, 937--948.
  \MR{1709781}

\bibitem[CW09]{ConstantinWu2009}
\bysame, \emph{H\"{o}lder continuity of solutions of supercritical dissipative
  hydrodynamic transport equations}, Ann. Inst. H. Poincar\'{e} Anal. Non
  Lin\'{e}aire \textbf{26} (2009), no.~1, 159--180. \MR{2483817}

\bibitem[CW12]{ChaeWu2012}
D.~Chae and J.~Wu, \emph{Logarithmically regularized inviscid models in
  borderline {S}obolev spaces}, J. Math. Phys. \textbf{53} (2012), no.~11,
  115601, 15. \MR{3026546}

\bibitem[CZV16]{Coti-ZelatiVicol2016}
M.~Coti~Zelati and V.~Vicol, \emph{On the global regularity for the
  supercritical {SQG} equation}, Indiana Univ. Math. J. \textbf{65} (2016),
  no.~2, 535--552. \MR{3498176}

\bibitem[Dab11]{Dabkowski2011}
M.~Dabkowski, \emph{Eventual regularity of the solutions to the supercritical
  dissipative quasi-geostrophic equation}, Geom. Funct. Anal. \textbf{21}
  (2011), no.~1, 1--13. \MR{2773101}

\bibitem[DL08]{DongLi2008}
H.~Dong and D.~Li, \emph{Spatial analyticity of the solutions to the
  subcritical dissipative quasi-geostrophic equations}, Arch. Ration. Mech.
  Anal. \textbf{189} (2008), no.~1, 131--158. \MR{2403602}

\bibitem[DL10]{DongLi2010}
\bysame, \emph{On the 2{D} critical and supercritical dissipative
  quasi-geostrophic equation in {B}esov spaces}, J. Differential Equations
  \textbf{248} (2010), no.~11, 2684--2702. \MR{2644145}

\bibitem[Don14]{Dong2014}
H.~Dong, \emph{On a multi-dimensional transport equation with nonlocal
  velocity}, Adv. Math. \textbf{264} (2014), 747--761. \MR{3250298}

\bibitem[Don17]{Dong2010}
\bysame, \emph{{D}issipative quasi-geostrophic equations in critical {S}obolev
  spaces: {S}moothing effect and global well-posedness}, Discrete Contin. Dyn.
  Syst. \textbf{26} (2017), no.~4, 1197--1211.

\bibitem[EM20]{ElgindiMasmoudi2020}
T.M. Elgindi and N.~Masmoudi, \emph{{$L^\infty$} ill-posedness for a class of
  equations arising in hydrodynamics}, Arch. Ration. Mech. Anal. \textbf{235}
  (2020), no.~3, 1979--2025. \MR{4065655}

\bibitem[FGH19]{FarwigGigaHsu2019}
R.~Farwig, Y.~Giga, and P.-Y. Hsu, \emph{On the continuity of the solutions to
  the {N}avier-{S}tokes equations with initial data in critical {B}esov
  spaces}, Ann. Mat. Pura Appl. (4) \textbf{198} (2019), no.~5, 1495--1511.
  \MR{4022105}

\bibitem[FT89]{FoiasTemam1989}
C.~Foias and R.~Temam, \emph{Gevrey {C}lass {R}egularity for the {S}olutions of
  the {N}avier-{S}tokes {E}quations}, J. Funct. Anal. \textbf{87} (1989),
  359--369.

\bibitem[HK07]{HmidiKeraani2007}
T.~Hmidi and S.~Keraani, \emph{Global solutions of the super-critical 2{D}
  quasi-geostrophic equation in {B}esov spaces}, Adv. Math. \textbf{214}
  (2007), no.~2, 618--638. \MR{2349714}

\bibitem[HKZ15]{HuKukavicaZiane2015}
W.~Hu, I.~Kukavica, and M.~Ziane, \emph{Sur l'existence locale pour une
  \'{e}quation de scalaires actifs}, C. R. Math. Acad. Sci. Paris \textbf{353}
  (2015), no.~3, 241--245. \MR{3306492}

\bibitem[HM10]{HimonasMisiolek2010}
A.A. Himonas and G.~Misiolek, \emph{Non-uniform dependence on initial data of
  solutions to the {E}uler equations of hydrodynamics}, Comm. Math. Phys.
  \textbf{296} (2010), no.~1, 285--301. \MR{2606636}

\bibitem[Inc15]{Inci2015}
H.~Inci, \emph{On the regularity of the solution map of the incompressible
  {E}uler equation}, Dyn. Partial Differ. Equ. \textbf{12} (2015), no.~2,
  97--113. \MR{3361243}

\bibitem[Inc18]{Inci2018}
\bysame, \emph{On the well-posedness of the inviscid {SQG} equation}, J.
  Differential Equations \textbf{264} (2018), no.~4, 2660--2683. \MR{3737850}

\bibitem[JKM20]{JollyKumarMartinez2020b}
M.S. Jolly, A.~Kumar, and V.R. Martinez, \emph{On local well-posedness of
  logarithmic regularizations of generalized sqg equations in borderline
  {S}obolev spaces}, (in preparation).

\bibitem[KN09]{KiselevNazarov2009}
A.~Kiselev and F.~Nazarov, \emph{A variation on a theme of {C}affarelli and
  {V}asseur}, Zap. Nauchn. Sem. S.-Peterburg. Otdel. Mat. Inst. Steklov. (POMI)
  \textbf{370} (2009), no.~Kraevye Zadachi Matematichesko\u{\i} Fiziki i
  Smezhnye Voprosy Teorii Funktsi\u{\i}. 40, 58--72, 220. \MR{2749211}

\bibitem[KNV07]{KiselevNazarovVolberg2007}
A.~Kiselev, F.~Nazarov, and A.~Volberg, \emph{Global well-posedness for the
  critical 2{D} dissipative quasi-geostrophic equation}, Invent. Math.
  \textbf{167} (2007), 445--453.

\bibitem[Kum]{KumarThesis2021}
A.~Kumar, \emph{Thesis ({P}h.{D}.)--indiana university}, Department of
  Mathematics, (in preparation).

\bibitem[Kwo20]{Kwon2020}
H.~Kwon, \emph{Strong ill-posedness of logarithmically regularized 2d euler
  equations in the borderline {S}obolev space}, J. Funct. Anal. (2020), 108822.

\bibitem[LO97]{LevermoreOliver1997}
C.D. Levermore and M.~Oliver, \emph{Analyticity of solutions for a generalized
  {E}uler equation}, J. Differential Equations \textbf{133} (1997), no.~2,
  321--339. \MR{1427856}

\bibitem[LR02]{Lemarie-Rieusset2002}
P.~G. Lemari\'{e}-Rieusset, \emph{Recent developments in the {N}avier-{S}tokes
  problem}, Chapman \& Hall/CRC Research Notes in Mathematics, vol. 431,
  Chapman \& Hall/CRC, Boca Raton, FL, 2002. \MR{1938147}

\bibitem[LX19]{LazarXue2019}
O.~Lazar and L.~Xue, \emph{Regularity results for a class of generalized
  surface quasi-geostrophic equations}, J. Math. Pures Appl. (9) \textbf{130}
  (2019), 200--250. \MR{4001633}

\bibitem[Miu06]{Miura2006}
H.~Miura, \emph{Dissipative quasi-geostrophic equation for large initial data
  in the critical {S}obolev space}, Comm. Math. Phys. \textbf{267} (2006),
  no.~1, 141--157. \MR{2238907}

\bibitem[MT96]{MajdaTabak1996}
A.J. Majda and E.~Tabak, \emph{A two-dimensional model for quasigeostrophic
  flow: comparison with two-dimensional {E}uler flow}, Phys. D \textbf{98}
  (1996), 515--522.

\bibitem[MX11]{MiaoXue2011}
C.~Miao and L.~Xue, \emph{On the regularity of a class of generalized
  quasi-geostrophic equations}, J. Differential Equations \textbf{251} (2011),
  no.~10, 2789--2821. \MR{2831714}

\bibitem[MX12]{MiaoXue2012}
\bysame, \emph{Global well-posedness for a modified critical dissipative
  quasi-geostrophic equation}, J. Differential Equations \textbf{252} (2012),
  no.~1, 792--818. \MR{2852227}

\bibitem[MY16]{MisiolekYoneda2016}
G.~Misiolek and T.~Yoneda, \emph{Local ill-posedness of the incompressible
  {E}uler equations in {$C^1$} and {$B^1_{\infty,1}$}}, Math. Ann. \textbf{364}
  (2016), no.~1-2, 243--268. \MR{3451386}

\bibitem[MZ17]{MartinezZhao2017}
V.R. Martinez and K.~Zhao, \emph{Analyticity and dynamics of a
  {N}avier-{S}tokes-{K}eller-{S}egel system on bounded domains}, Dyn. Partial
  Differ. Equ. \textbf{14} (2017), no.~2, 125--158. \MR{3659170}

\bibitem[OT00]{OliverTiti2000}
M.~Oliver and E.S. Titi, \emph{Remark on the rate of decay of higher order
  derivatives for solutions to the {N}avier-{S}tokes equations in {${\bf
  R}^n$}}, J. Funct. Anal. \textbf{172} (2000), no.~1, 1--18. \MR{1749867}

\bibitem[OT01]{OliverTiti2001}
\bysame, \emph{On the domain of analyticity of solutions of second order
  analytic nonlinear differential equations}, J. Differential Equations
  \textbf{174} (2001), no.~1, 55--74. \MR{1844523}

\bibitem[Ped82]{Pedlosky1986}
J.~Pedlosky, \emph{Geophysical fluid dynamics}, Springer Verlag, 1982.

\bibitem[Res95]{Resnick1995}
S.G. Resnick, \emph{Dynamical problems in non-linear advective partial
  differential equations}, ProQuest LLC, Ann Arbor, MI, 1995, Thesis
  (Ph.D.)--The University of Chicago. \MR{2716577}

\bibitem[Tao06]{TaoBook2006}
T.~Tao, \emph{Nonlinear dispersive equations}, CBMS Regional Conference Series
  in Mathematics, vol. 106, Published for the Conference Board of the
  Mathematical Sciences, Washington, DC; by the American Mathematical Society,
  Providence, RI, 2006, Local and global analysis. \MR{2233925}

\bibitem[Tem01]{TemamBook2001}
R.~Temam, \emph{{N}avier-{S}tokes equations: Theory and numerical analysis},
  AMS Chelsea Publishing, Providence, RI, 2001, Reprint of the 1984 edition.
  \MR{MR1846644 (2002j:76001)}

\bibitem[Wu05]{Wu2004}
J.~Wu, \emph{Global solutions of the 2{D} dissipative quasi-geostrophic
  equation in {B}esov spaces}, SIAM J. Math. Anal. \textbf{36} (2004/05),
  no.~3, 1014--1030. \MR{2111923}

\end{thebibliography}
\newcommand{\etalchar}[1]{$^{#1}$}
\providecommand{\bysame}{\leavevmode\hbox to3em{\hrulefill}\thinspace}
\providecommand{\MR}{\relax\ifhmode\unskip\space\fi MR }
\providecommand{\MRhref}[2]{%
  \href{http://www.ams.org/mathscinet-getitem?mr=#1}{#2}
}
\providecommand{\href}[2]{#2}

\vspace{.3in}
\begin{multicols}{2}

\noindent Michael S. Jolly\\ 
{\footnotesize
Department of Mathematics\\
Indiana University-Bloomington\\
Web: \url{https://msjolly.pages.iu.edu/}\\
 Email: \url{msjolly@indiana.edu}}\\[.2cm]

\noindent Anuj Kumar\\ 
{\footnotesize
Department of Mathematics\\
Indiana University-Bloomington\\
Web: \url{https://math.indiana.edu/about/graduate-students}\\
 Email: \url{kumar22@iu.edu}} \\[.2cm]

\columnbreak 

\noindent Vincent R. Martinez\\
{\footnotesize
Department of Mathematics and Statistics\\
CUNY-Hunter College\\
Web: \url{http://math.hunter.cuny.edu/vmartine/}\\
 Email: \url{vrmartinez@hunter.cuny.edu}}

\end{multicols}

\end{document}